\PassOptionsToPackage{hypertexnames=false}{hyperref}
\documentclass[pdflatex,sn-mathphys-num]{sn-jnl}

\usepackage{graphicx}%
\usepackage{multirow}%
\usepackage{amsmath,amssymb,amsfonts}%
\usepackage{amsthm}%
\usepackage{mathrsfs}%
\usepackage[title]{appendix}%
\usepackage{xcolor}%
\usepackage{textcomp}%
\usepackage{manyfoot}%
\usepackage{booktabs}%
\usepackage{algorithm}%
\usepackage{algorithmicx}%
\usepackage{algpseudocode}%
\usepackage{listings}%

\theoremstyle{thmstyleone}%
\newtheorem{theorem}{Theorem}
\newtheorem{proposition}[theorem]{Proposition}%

\theoremstyle{thmstyletwo}%
\newtheorem{example}{Example}%
\newtheorem{remark}{Remark}%

\theoremstyle{thmstylethree}%
\newtheorem{definition}{Definition}%

\usepackage{microtype}
\usepackage{booktabs} 

\usepackage{hyperref}
\usepackage{xurl}
\usepackage{amsopn}
\usepackage{breakcites}

\usepackage{amssymb}
\usepackage{xcolor}

\DeclareMathOperator{\sign}{sign}
\usepackage{mathtools}
\usepackage{booktabs}
\usepackage{multirow}

\newtheorem{lemma}[theorem]{Lemma}
\newtheorem{corollary}[theorem]{Corollary}

\let\REQUIRE\Require
\let\FOR\For
\let\STATE\State
\let\IF\If
\let\ENDIF\EndIf
\let\ENDFOR\EndFor
\let\RETURN\Return

\newcommand{\N}{\mathbb{N}}
\newcommand{\R}{\mathbb{R}}

\newcommand{\prox}[3][]{\operatorname{prox}^{#1}_{#2}\left(#3 \right)}

\DeclareMathAlphabet{\mathscr}{OMS}{cmsy}{m}{n}

\begin{document}

\title[Variable Smoothing for Weakly Convex Problems with Non-Euclidean Directions]{Variable Smoothing for Weakly Convex Problems with Non-Euclidean Directions}


\author*[1]{\fnm{Farid} \sur{Najar}}\email{farid.najar@outlook.com}


\affil[1]{\orgdiv{DAVID Lab}, \orgname{UVSQ}, \orgaddress{\street{45 Avenue des États-Unis}, \city{Versailles}, \postcode{78000}, \state{Yvelines}, \country{France}}}


\abstract{
We propose MELMO (Moreau Envelope Smoothing with Linear Minimization Oracles), an algorithm for composite optimization problems of the form $\min_x f(x) + g(Tx)$, where $f$ is smooth and $g$ may be non-smooth. The method leverages the Moreau envelope to smooth the non-smooth component while adapting to problem geometry through linear minimization oracles. Assuming $g$ is $\rho$-weakly convex, we establish a family of convergence bounds parameterized by the step-size and smoothing schedules, thereby making explicit the trade-off between optimizing the smoothed objective and recovering stationarity for the original composite problem. In particular, one regime yields $\mathcal O(k^{-1/4})$ rates for both the smoothed-gradient norm and a composite stationarity proxy, while another yields $\mathcal O(k^{-1/3})$ for the smoothed-gradient norm together with $\mathcal O(k^{-1/4})$ for the composite proxy.
We also establish a $K$-horizon-dependent convergence rate that yields $\mathcal O(K^{-1/3})$ for the composite proxy.
Empirically, MELMO is competitive with variable smoothing and subgradient baselines on sparse low-rank matrix factorization and image denoising.
}

\keywords{Optimization, Machine Learning, Non-Euclidean methods, Non-smooth optimization}



\maketitle

\section{Introduction}
Optimization problems involving non-smooth objective functions arise frequently in machine learning, signal processing, and statistics. These problems often take the form of composite minimization, where the objective function is the sum of a smooth data fidelity function (e.g., MSE) and a non-smooth regularizer that encodes some inductive bias for the sought-after solution (e.g., sparsity, low rank, etc.). We consider the following composite minimization problem

\begin{equation}\tag{P}
\min_{x \in \mathbb{R}^n} F(x) = f(x) + g(Tx),
\label{eq:composite_problem}
\end{equation}

where $f \in C^{1,1}(\mathbb{R}^n)$ is a Lipschitz-smooth function with constant $L_{\nabla f} > 0$, $g: \mathbb{R}^m \to \mathbb{R}$ is a Lipschitz-continuous $\rho$-weakly convex function that may be non-smooth, and $T: \mathbb{R}^n \to \mathbb{R}^m$ is a linear operator.

Examples of problems fitting~\eqref{eq:composite_problem} include the classical LASSO~\cite{lasso} and variants with weakly convex regularizers like the MCP~\cite{mcp} or SCAD penalty~\cite{scad}; low-rank recovery problems such as matrix completion~\cite{candes2008exact} and robust PCA~\cite{candes2011robust}; total variation denoising~\cite{rudin1992nonlinear}; and robust statistical estimation with non-smooth losses such as the Huber loss~\cite{huber1992robust}. The regularizer $g$ is important for ill-posed problems, but it is also what makes \eqref{eq:composite_problem} difficult to optimize, especially when $g$ is non-convex.

We propose \emph{MELMO} (Moreau Envelope Smoothing with Linear Minimization Oracles), which combines two complementary ingredients: Moreau envelope smoothing to produce differentiable surrogates of the composite objective, and linear minimization oracles (LMOs) to generate geometry-aware updates from those surrogates.
 
In the normalized steepest-descent the chosen geometry determines the descent direction, but in many modern applications the ambient Euclidean geometry is not the most informative one. In matrix optimization problems, for example, spectral, nuclear, or entry-wise geometries can induce qualitatively different updates that emphasize different structural biases. For a sparse update, one can use the $\ell_1$ norm for vectors; this is also known as the Gauss-Southwell update \citep{bach2011convex}. The spectral norm for matrices yields full-rank updates; this is also known as spectral descent \citep{carlson2015preconditioned} or Muon \citep{muon}. If the $\ell_2$ norm is used, we recover normalized gradient descent, since the LMO returns $-\varphi_k/\|\varphi_k\|_2$. MELMO is designed to expose this choice explicitly: instead of fixing the Euclidean geometry, it uses an LMO over the unit ball of a chosen norm to produce the corresponding update from the smoothed surrogate, an approach previously explored across different geometries~\cite{balles2020geometry,vasudeva2025generalization,nutini2015coordinate,carlson2015stochastic,kovalev2018stochastic}.

Throughout the remainder of this work, $\|.\|$ denotes the norm associated with the underlying geometry, unless explicitly stated otherwise.

LMOs are used in conditional-gradient or Frank--Wolfe methods, where they generate feasible update directions for constrained problems. In our setting they play a different role. The oracle is not enforcing feasibility in a constraint set; rather, it serves as a geometry-aware subroutine that selects the update direction from the surrogate model relative to the chosen norm.

This perspective sits between two relevant lines of work. On one side, \cite{bohm2021variable} studies variable smoothing using the Moreau envelope for smooth $f$ and weakly convex $g$, using Euclidean gradient steps and proving an $\mathcal O(\epsilon^{-3})$ stationarity complexity, or equivalently an $\mathcal O(k^{-1/3})$ rate on a stationarity measure. On the other side, \cite{yurtsever2019conditional,silveti2026frank} analyze conditional-gradient methods for composite problems in constrained settings, while \cite{boct2020variable,yurtsever2019conditional} obtain $\mathcal O(k^{-1/2})$ functional-gap guarantees in convex regimes with Moreau envelope smoothing. What is missing from this landscape is a method that blends smoothing for weakly convex composite problems (via the Moreau envelope) with the update directions adapted to non-Euclidean geometry through an LMO. MELMO is constructed to fill that gap.

We summarize our contributions are as follows.
\begin{itemize}
    \item We introduce MELMO, an optimization method for unconstrained composite problems like \eqref{eq:composite_problem} that combines the Moreau envelope with exact or inexact LMOs. The method incorporates the geometry of a chosen norm through the LMO, giving structured updates with desirable properties (e.g., sparsity or low rank).
    
    \item For Lipschitz-smooth $f$ and $\rho$-weakly convex $g$, we prove a family of schedule-dependent convergence bounds that makes the trade-off between optimizing the smoothed surrogate and recovering stationarity for the original composite problem explicit. In particular, we isolate a balanced regime with $\mathcal O(k^{-1/4})$ rates for both the smoothed-gradient norm and the composite stationarity proxy, and a smoothed-gradient-favoring regime with $\mathcal O(k^{-1/3})$ for the former together with $\mathcal O(k^{-1/4})$ for the latter.

    \item We develop a variant of MELMO that adjusts the parameters in an epoch-wise fashion, giving stronger theoretical guarantees with computable $\epsilon$-proxy certificates. With power schedules inherited from the main non-convex analysis, this yields an $\mathcal O(\epsilon^{-4})$ certificate complexity, while an epoch-constant specialization with per-epoch restarts improves the bound to $\mathcal O(\epsilon^{-3})$ which is the state-of-the-art bound. 
    Under surjective $T$, these certificates translate into control of the stronger composite stationarity proxy.
    
    \item We evaluate MELMO on sparse low-rank matrix factorization, image-denoising and masked matrix recovery. These experiments illustrate both the practical effect of geometry choice and the current boundary between the theory proved here and broader empirical behavior.
    In particular, even though MELMO does not have an improved theoretical guarantee, it has better practical convergence properties.
\end{itemize}

\subsection{Related Work}\label{subsec:related}

\subsubsection{Non-Euclidean Methods}
Non-Euclidean optimization methods, by which we mean methods that adapt to problem-specific geometries, e.g., through a Riemannian metric \cite{boumal2023introduction}, a Legendre potential function \cite{bregman1967relaxation}, or an LMO \cite{frank1956algorithm,levitin1966constrained}, offer alternatives to the classical Euclidean setting of Hilbert spaces. Conditional Gradient Methods (Frank-Wolfe algorithms)~\cite{frank1956algorithm,levitin1966constrained} use the LMO over convex compact constraint sets, adapting to the geometry of the constraints in a way that avoids projections. Similarly, Nesterov extended these ideas beyond constrained optimization to non-Euclidean norms for unconstrained optimization~\cite{nesterov2009primal}. Later work showed practical benefits: \cite{kelner2014almost} achieved nearly-linear time for max-flow using specific, non-Euclidean norms; \cite{carlson2015preconditioned} used adaptive preconditioning to align descent directions with spectral geometry in deep learning. A study of the advantages of different geometries, highlighting the benefit of sign descent, is given in \cite{balles2020geometry}.

Recent advances build on \cite{carlson2015preconditioned} to apply non-Euclidean principles to neural network training; for an in-depth survey, we defer to \cite{pethick2025training}. Muon~\cite{muon}, and more generally Scion~\cite{scion}, leverage the spectral norm geometry to effectively train neural networks. Extending this, \cite{shulgin2025beyond} considered Scion with inexact LMO computations; in our work, we make the same general assumption about inexactly computing the LMO. Several other works have analyzed the convergence of Frank-Wolfe algorithms under inexact LMO computations, starting with \cite{lacoste2013affine} in the vanilla setting and then \cite{yurtsever2019conditional} and \cite{silveti2021inexact} in the context of non-smooth optimization.

\subsubsection{Non-smooth Optimization}
The use of the Moreau envelope in conjunction with LMOs has been studied in the Frank-Wolfe literature, beginning with \cite{argyriou2014hybrid}, who considered Lipschitz-continuous non-smooth convex regularizers $g\circ T$, and followed by \cite{yurtsever2019conditional}, \cite{silveti2020generalized}, \cite{woodstock2025splitting}, and \cite{silveti2026frank} which generalized this to include constraints and non-convex objectives. We are not aware of any works that study Moreau envelope smoothing with LMOs for unconstrained composite problems like \eqref{eq:composite_problem}.

\section{Notation \& Preliminaries}\label{sec:notnotation_preliminaries}
\subsection{Relevant Function Classes and Regularity}

\begin{definition}[Lipschitz-smooth functions]
A differentiable function $f: \mathbb{R}^n \to \mathbb{R}$ is said to be $L$-smooth with respect to the norm $\|\cdot\|$ if its gradient is $L$-Lipschitz from $\|\cdot\|$ to the dual norm $\|\cdot\|_*$, i.e., there exists $L > 0$ such that
$$
\|\nabla f(x) - \nabla f(y)\|_* \leq L\|x-y\|, \quad \forall x,y \in \mathbb{R}^n.
$$
Equivalently, $f$ satisfies the descent lemma:
$$
f(y) \leq f(x) + \langle \nabla f(x), y-x \rangle + \frac{L}{2}\|y-x\|^2, \quad \forall x,y \in \mathbb{R}^n.
$$
\end{definition}

\begin{definition}[$\rho$-weakly convex functions]
A proper lower semi-continuous function $g: \mathbb{R}^m \to \mathbb{R} \cup \{+\infty\}$ is $\rho$-weakly convex (for $\rho \geq 0$) if the function $x \mapsto g(x) + \frac{\rho}{2}\|x\|^2$ is convex. When $\rho = 0$, we recover standard convex functions.
\end{definition}

Weakly convex functions represent a class that includes many practical non-convex functions used in machine learning and statistics, such as the MCP~\cite{mcp} and the SCAD~\cite{scad} regularizers.
This class serves as a bridge between convex and general non-convex optimization, allowing for the extension of proximal algorithms to non-convex settings while retaining guaranteed convergence to stationary points.

The MCP function with concavity parameter $\mu > 1$ and regularization parameter $\lambda > 0$ is defined as
$$
\text{MCP}_{\mu, \lambda}(x) = 
\begin{cases} 
\lambda |x| - \dfrac{x^2}{2\mu} & \text{if } |x| \leq \mu\lambda, \\[2mm]
\dfrac{1}{2} \mu \lambda^2 & \text{if } |x| > \mu\lambda.
\end{cases}
$$
We remark that MCP is $\mu^{-1}$-weakly convex.

\subsection{The Moreau Envelope and Proximal Operator}

\begin{definition}[Moreau envelope]
For a proper lower semi-continuous function $g: \mathbb{R}^m \to \mathbb{R} \cup \{+\infty\}$ and smoothing parameter $\beta > 0$, the Moreau envelope (or Moreau-Yosida regularization) is defined as:
$$
g^\beta(x) = \min_{u \in \mathbb{R}^m} \left\{ g(u) + \frac{1}{2\beta}\|x - u\|^2_2 \right\}.
$$
\end{definition}

The Moreau envelope provides a smooth approximation to a potentially non-smooth function $g$. When $g$ is $\rho$-weakly convex with $\rho>0$ and $0 < \beta \le 1/(2\rho)$, the Moreau envelope $g^\beta$ is continuously differentiable with with $1/\beta$-Lipschitz gradient (see Lemma~\ref{lem:moreau_grad}); when $\rho=0$, any $\beta>0$ is allowed.

\begin{definition}[Proximal operator]
For a proper lower semi-continuous function $g: \mathbb{R}^m \to \mathbb{R} \cup \{+\infty\}$ and parameter $\beta > 0$, the proximal operator is defined as:
$$
\mathrm{prox}_{\beta g}(x) = \underset{u \in \mathbb{R}^m}{\arg\min} \left\{ \beta g(u) + \frac{1}{2}\|x - u\|^2_2 \right\}.
$$
\end{definition}

Central to first-order optimization is not just smoothness but the accessibility of computing a gradient. For the Moreau envelope, computing a gradient corresponds to computing an associated proximal operator. Indeed, the prox and the Moreau envelope are related, being defined by the same minimization problem. The gradient of the Moreau envelope is given by the following lemma.
\begin{lemma}
\label{lem:moreau_grad}
Let $g: \mathbb{R}^n \to \mathbb{R} \cup \{+\infty\}$ be a proper, lower semi-continuous, $\rho$-weakly convex function. For any $\beta > 0$ satisfying $\beta \le \frac{1}{2\rho}$ for $\rho > 0$, or any $\beta > 0$ otherwise,
the Moreau envelope $g^\beta$ is continuously differentiable and its gradient is given by
\begin{equation*}
\nabla g^\beta(x) = \frac{1}{\beta} \left( x - \prox{\beta g}{x} \right)
\end{equation*}
This gradient is $\frac{1}{\beta}$-Lipschitz continuous.

\end{lemma}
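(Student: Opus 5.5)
The approach is to reduce to the convex case by absorbing the quadratic part of $g$ into the envelope. Then the gradient formula follows from the classical convex theory, and the Lipschitz constant comes from a direct monotonicity computation in which the restriction $\beta\le 1/(2\rho)$ is used. Throughout, all norms in this lemma are Euclidean, since the Moreau envelope is defined with $\|\cdot\|_2$. The case $\rho=0$ is the classical convex statement; the argument below also covers it with $\rho=0$.

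\emph{Step 1 (well-posedness of the prox).} Write $g=h-\frac{\rho}{2}\|\cdot\|_2^2$ with $h$ proper, lsc and convex. The map $u\mapsto g(u)+\frac{1}{2\beta}\|x-u\|_2^2$ equals $h(u)$ plus a quadratic with leading coefficient $\frac{1}{2}(\frac1\beta-\rho)$. Since $\beta\le\frac{1}{2\rho}$ gives $\frac1\beta-\rho\ge\rho\ge0$ (indeed $>0$), this map is strongly convex. Hence $p(x):=\mathrm{prox}_{\beta g}(x)$ exists and is unique.

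\emph{Step 2 (reduction to a convex envelope).} Set $\lambda=\frac{\beta}{1-\rho\beta}$ and $s=\frac{1}{1-\rho\beta}=\lambda/\beta$. Completing the square should give
\[
g^\beta(x)=h^\lambda(sx)-\frac{\rho}{2(1-\rho\beta)}\|x\|_2^2 ,
\]
where the minimizer of $h(u)+\frac{1}{2\lambda}\|sx-u\|_2^2$ is exactly $p(x)$. By the classical convex result, $h^\lambda$ is $C^1$ with $\nabla h^\lambda(y)=\frac1\lambda(y-\mathrm{prox}_{\lambda h}(y))$. The chain rule then gives
\[
\nabla g^\beta(x)=\frac{s}{\lambda}\bigl(sx-p(x)\bigr)-\rho s\,x .
\]
Using $s/\lambda=1/\beta$ and $s(1-\rho\beta)=1$, this simplifies to $\frac1\beta(x-p(x))$. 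Continuity of the gradient is inherited from $h^\lambda$.

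\emph{Step 3 (Lipschitz constant, the main point).} Let $p=p(x)$, $q=p(y)$, $a=x-y$ and $d=p-q$; the goal is $\|a-d\|_2\le\|a\|_2$. Optimality gives $\frac{x-p}{\beta}\in\partial g(p)$ and $\frac{y-q}{\beta}\in\partial g(q)$, where $\partial g=\partial h-\rho\,\mathrm{id}$. Monotonicity of $\partial h$ then yields the hypomonotonicity inequality
\[
\langle (x-p)-(y-q),\,p-q\rangle\ge-\rho\beta\|p-q\|_2^2 .
\]
Equivalently, $\langle a,d\rangle\ge(1-\rho\beta)\|d\|_2^2\ge\frac12\|d\|_2^2$, and the last inequality is exactly where $\beta\le\frac1{2\rho}$ is used. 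Therefore
\[
\|a-d\|_2^2=\|a\|_2^2-2\langle a,d\rangle+\|d\|_2^2\le\|a\|_2^2 .
\]
Dividing by $\beta$ gives $\|\nabla g^\beta(x)-\nabla g^\beta(y)\|_2\le\frac1\beta\|x-y\|_2$.

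The main obstacle is this last step. A cruder bound via the $\frac{1}{1-\rho\beta}$-Lipschitzness of the prox would only give a constant like $\frac{1}{\beta}\bigl(1+\frac{1}{1-\rho\beta}\bigr)$. The sharp $\frac1\beta$ needs the expansion of $\|a-d\|_2^2$ above combined with the factor-$\tfrac12$ condition.
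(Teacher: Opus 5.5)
Your proof is correct, but it takes a different route from the paper. The paper gives no argument of its own: it cites Bauschke--Combettes (Section 12.4) for the convex case and Hoheisel et al.\ (Corollary 3.4) for the weakly convex case.

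You instead make the lemma self-contained, in two parts:
\begin{itemize}
\item Completing the square with $\lambda=\beta/(1-\rho\beta)$ and $s=1/(1-\rho\beta)$ writes $g^\beta$ as the convex envelope $h^\lambda(sx)$ minus a quadratic. From the classical convex theory alone this gives existence and uniqueness of the prox, the $C^1$ property, and the gradient formula.
\item The hypomonotonicity estimate $\langle a,d\rangle\ge(1-\rho\beta)\|d\|_2^2$, combined with the expansion of $\|a-d\|_2^2$, gives the sharp constant $1/\beta$.
\end{itemize}
The envelope identity, the chain-rule simplification to $\frac1\beta(x-p(x))$, and the final inequality all check out. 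You can also read off the inclusion $\frac{x-p}{\beta}+\rho p\in\partial h(p)$ directly from the convex subproblem, since $\frac{sx-p}{\lambda}=\frac{x-p}{\beta}+\rho p$. That avoids any appeal to nonconvex subdifferential calculus.

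The citation buys brevity; your argument buys transparency about exactly where each hypothesis enters. Strong convexity of the prox subproblem, single-valuedness of the prox, and the gradient formula already hold for every $\beta<1/\rho$. The factor $\tfrac12$ in $\beta\le 1/(2\rho)$ is needed only to make the Lipschitz constant $1/\beta$ rather than $\max\{1/\beta,\rho/(1-\rho\beta)\}$. The example $g=-\tfrac{\rho}{2}\|\cdot\|_2^2$ shows this constant is attained. So your computation also refines the paper's Remark, which attributes the condition $\beta\le1/(2\rho)$ to single-valuedness of the prox.
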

\begin{proof}
    The proof of the convex case ($\rho= 0$) is provided in~\cite{Bauschke2017} (Section 12.4), and for the weakly convex case in~\cite{hoheisel2010proximal} (Corollary 3.4).
\end{proof}
\begin{remark}
    The condition $\beta \le 1/(2\rho)$ is essential for ensuring that the prox is single-valued despite being induced by a non-convex function. In the convex case $\rho=0$, no upper bound on $\beta$ is necessary.
\end{remark}

This property makes the Moreau envelope valuable for non-smooth optimization, as it transforms a potentially non-smooth function into a smooth one while preserving proximity to the original objective function.

\begin{example}
The proximal operator of MCP (also known as \textit{firm thresholding}~\cite{bayram2015convergence}) is
$$
\operatorname{prox}_{\beta \text{MCP}}(x) = 
\begin{cases} 
0 & \text{if } |x| \leq \beta\lambda, \\[2mm]
\dfrac{x - \beta\lambda  \sign(x)}{1 - \beta/\mu} & \text{if } \beta\lambda < |x| \leq \mu\lambda, \\[4mm]
x & \text{if } |x| > \mu\lambda.
\end{cases}
$$
with $0<\beta\leq \mu/2$ interpreted to be the smoothing parameter of the Moreau envelope.
\end{example}

\begin{figure}
    \centering
    \includegraphics[width=0.65\linewidth]{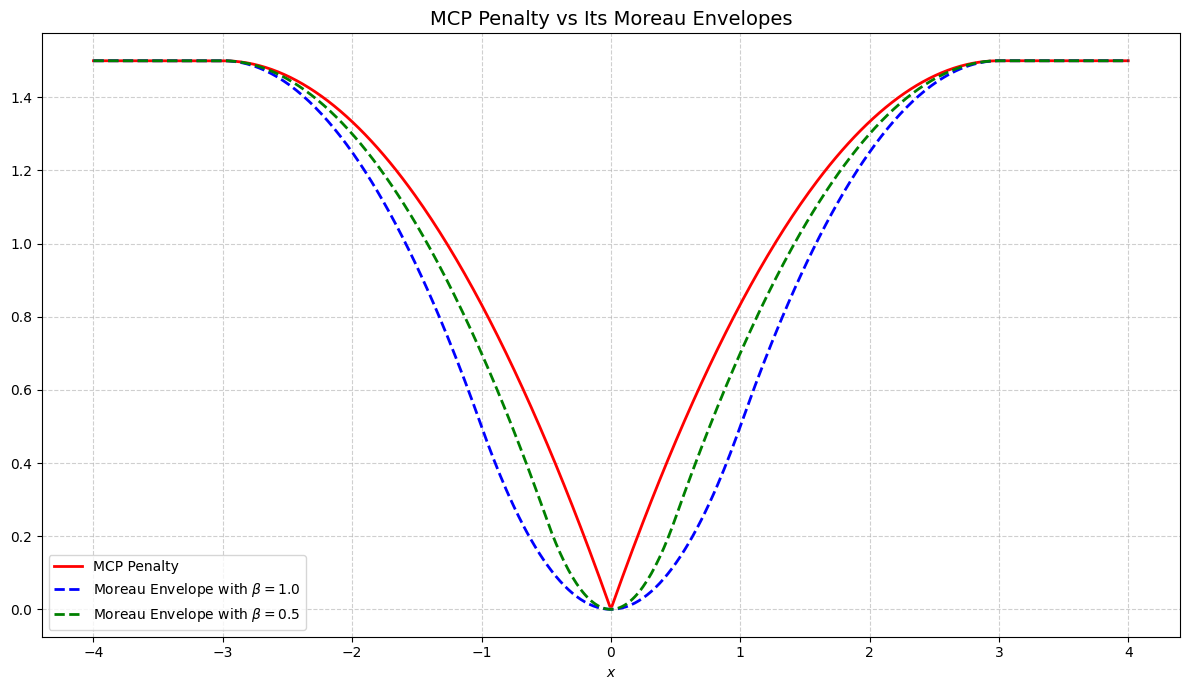}
    \caption{The MCP and its Moreau envelopes with different smoothing parameters $\beta$.}
    \label{fig:envelopes}
\end{figure}

\subsection{Linear Minimization Oracle and Dual Norm}

\begin{definition}[Linear minimization oracle]
For a nonempty convex compact set $\mathcal{D} \subseteq \mathbb{R}^n$ and vector $v \in \mathbb{R}^n$, the linear minimization oracle (LMO) is defined as:
$$
\mathrm{lmo}_{\mathcal{D}}(v) \in \underset{u \in \mathcal{D}}{\arg\min}\ \langle u, v \rangle.
$$
\end{definition}

In our context, we consider the unit ball corresponding to a (possibly non-Euclidean) norm $\mathcal{D} = \{u \in \mathbb{R}^n : \|u\| \leq 1\}$, so:
$$
\mathrm{lmo}(v) \in \underset{\|u\| \leq 1}{\arg\min}\ \langle u, v \rangle,
$$
and any such minimizer satisfies $\langle \mathrm{lmo}(v), v \rangle = -\|v\|_*$ with $\|\mathrm{lmo}(v)\| \leq 1$ where $\|.\|_* = \sup_{\|u\| \leq 1} \langle u, . \rangle$ is the dual norm. In the Euclidean case, this reduces to $\mathrm{lmo}(v) = -v/\|v\|_2$ for $v \neq 0$. Here and throughout, $\|\cdot\|_*$ denotes the dual norm.

In many cases, such as the LMO over the unit ball of the spectral norm, the exact oracle is either unavailable or expensive to compute.

\begin{definition}[$\delta$-inexact LMO]\label{def:inexact-lmo}

Let $\delta\in[0,1)$. We say that an oracle $\widetilde{\mathrm{lmo}}$ is a \emph{$\delta$-inexact LMO} if, for every $v\in\R^n$, there exists an exact solution $d\in \mathrm{lmo}(v)$ such that, with $\widetilde d=\widetilde{\mathrm{lmo}}(v)$,
$$
\|\widetilde d-d\|\le \delta.
$$

\end{definition}

\begin{remark}

The exact case is recovered by taking $\delta=0$. Unless stated otherwise, the algorithms and convergence results below are written for a $\delta$-inexact LMO in the sense of Definition~\ref{def:inexact-lmo}. This definition was also used to analyze the Muon algorithm in \cite{shulgin2025beyond}.

\end{remark}

\subsection{Stationarity Measures}

Because the composite objective $F(x) = f(x) + g(Tx)$ may be non-smooth, we track progress through two related proxies. For $\beta > 0$, define the smoothed objective
\begin{equation}\tag{$\text{P}_\beta$}
F_\beta(x) = f(x) + g^\beta(Tx)
\label{eq:smoothed_composite_problem}
\end{equation}
and the proximal point
$$
z_\beta(x) = \mathrm{prox}_{\beta g}(Tx).
$$

We then aim to prove convergence rates on the quantities
$$
\|\nabla F_\beta(x)\|_*
\qquad\text{and}\qquad
\|Tx-z_\beta(x)\|_2.
$$
The first quantity measures first-order optimality for the smoothed problem, whereas the second measures how faithfully the surrogate model tracks the original composite objective. The role of the smoothing schedule is to balance these two goals: larger $\beta$ makes the smoothed problem easier to optimize, while smaller $\beta$ makes $z_\beta(x)$ closer to $Tx$ and therefore improves fidelity to the original non-smooth objective.

\begin{definition}[$\epsilon$-certificates]

Let $\epsilon>0$. We say that $(x,\beta)$ is an \emph{$\epsilon$-smoothed certificate} if
$$
\|\nabla F_\beta(x)\|_* \le \epsilon.
$$
We say that $(x,z_\beta(x))$ is an \emph{$\epsilon$-proxy certificate} if
$$
\|\nabla F_\beta(x)\|_* \le \epsilon
\qquad\text{and}\qquad
\|Tx-z_\beta(x)\|_2 \le \epsilon.
$$

\end{definition}

\begin{remark}

The algorithmic stopping rules in this paper are stated in terms of $\epsilon$-proxy certificates because both quantities are directly computable.
When $T$ is surjective, an $\epsilon$-proxy certificate yields near-stationarity for the original problem~\eqref{eq:composite_problem}, in the spirit of~\cite{bohm2021variable} (Section 3.2). Let $\sigma_{\min}(T)>0$ denote the smallest singular value of $T$ (positive by surjectivity) and set
$$
\tilde x := x + T^*(TT^*)^{-1}\big(z_\beta(x)-Tx\big),
$$
so that $T\tilde x = z_\beta(x)$ and $\|\tilde x - x\|_2 \le \sigma_{\min}(T)^{-1}\|Tx - z_\beta(x)\|_2$. Since $\nabla g^{\beta}(Tx)\in\partial g(z_\beta(x))$, we get
\begin{equation}\label{eq:surjectiveT}  
\mathrm{dist}_2\big({-\nabla f(\tilde x)},\, T^*\partial g(T\tilde x)\big)
\le
c_1\|\nabla F_\beta(x)\|_* + c_2c_3\,L_{\nabla f}\,\sigma_{\min}(T)^{-1}\|Tx-z_\beta(x)\|_2,
\end{equation}

where $c_1,c_2, c_3>0$ are norm-equivalence constants such that $\|\cdot\|_2\le c_1\|\cdot\|_*$, $\|\cdot\|_2\le c_2\|\cdot\|$ and $\|\cdot\|\le c_3\|\cdot\|_2$. 
In words: $x$ lies within distance $\sigma_{\min}(T)^{-1}\epsilon$ of a point $\tilde x$ at which the composite problem is $\mathcal O(\epsilon)$-stationary. We therefore state convergence rates for the certificates without assuming surjectivity of $T$, and rates on this stronger composite stationarity proxy only when $T$ is assumed to be surjective.

\end{remark}

\section{MELMO}
This algorithm is named \textbf{MELMO} (Moreau Envelope Smoothing with Linear Minimization Oracles) and is shown in Algorithm~\ref{alg:Gamons}. Adopting the framework of normalized steepest-descent methods, MELMO uses a step-size schedule $(\gamma_k)_{k\geq 1}$ and a smoothing schedule $(\beta_k)_{k\geq 1}$. The method can be run with any oracle satisfying Definition~\ref{def:inexact-lmo}.
\begin{algorithm}

\begin{algorithmic}
    \REQUIRE $x_1 \in \R^d$, horizon $K\in\N$, schedules $(\beta_k)_{k=1}^K$ and $(\gamma_k)_{k=1}^K$, and an inexact oracle $\widetilde{\mathrm{lmo}}$ satisfying Definition~\ref{def:inexact-lmo}
    
    \FOR{$k=1,2,\dotsc,K$}
    \STATE $\nabla  F_k(x_k) \leftarrow \nabla f(x_k) + T^*\nabla g^{\beta_k}(Tx_k)$
    \STATE $\widetilde{d}_{k} \leftarrow \widetilde{\mathrm{lmo}}(\nabla  F_k(x_k))$
    \STATE $x_{k+1} \leftarrow x_k + \gamma_k \widetilde{d}_k$
    \ENDFOR
    
    \noindent\RETURN $x_{K+1}$
  \end{algorithmic}
\caption{MELMO}
\label{alg:Gamons}
\end{algorithm}

After computing the gradient of the smoothed function $F_k$, the update direction $\widetilde d_k$ is computed using the LMO for the unit ball associated with the chosen norm. This step encodes the geometry awareness of the algorithm that gives beneficial structure to the updates. In what follows, we analyze the convergence of this algorithm.

\subsection{Convergence Analysis}
Because we optimize a smoothed surrogate of the original objective, we need to control two different quantities: the dual norm of the smoothed gradient and the discrepancy between the proximal point and the original composite model \eqref{eq:composite_problem}. This is exactly where the schedule trade-off enters. Decreasing $\beta_k$ faster improves the fidelity term $\|Tx_k - \mathrm{prox}_{\beta_k g}(Tx_k)\|$, but it also makes the smoothed objective less well-conditioned through the factor $\beta_k^{-1}$ in its smoothness constant.

\begin{theorem}\label{thm:gap original prox}
    Let $g: \mathbb{R}^m \to \mathbb{R}$ be $L_g$-Lipschitz and $\rho$-weakly convex, and $T\colon\mathbb{R}^n\to\mathbb{R}^m$ be a linear operator.
    With $\beta_k = \beta k^{-q}$, where $q\geq 0$ and $\beta\in(0,1/(2\rho)]$ when $\rho>0$ (or any $\beta>0$ when $\rho=0$), we have
    $$
    \|Tx_k - \mathrm{prox}_{\beta_k g}(Tx_k)\|_2 \leq \frac{\beta L_g}{k^{q}}
    $$
\end{theorem}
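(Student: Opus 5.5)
The bound holds pointwise for any $y\in\mathbb{R}^m$ and any admissible $\beta_k$, not just along the iterates. So I will show that for every $y$ and every $\beta'\in(0,\beta]$,
$$\|y-\mathrm{prox}_{\beta' g}(y)\|_2\le \beta' L_g,$$
and then specialize to $y=Tx_k$ and $\beta'=\beta_k=\beta k^{-q}$. Since $q\ge 0$ we have $\beta_k\le\beta\le 1/(2\rho)$, so Lemma~\ref{lem:moreau_grad} applies at every $k$. In particular the prox is single-valued and $\nabla g^{\beta_k}(y)=\beta_k^{-1}(y-\mathrm{prox}_{\beta_k g}(y))$. The claim is therefore equivalent to $\|\nabla g^{\beta_k}(y)\|_2\le L_g$.

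\textbf{Direct minimality argument.} Let $p=\mathrm{prox}_{\beta' g}(y)$. Comparing the prox objective at $p$ with its value at $u=y$ gives
$$g(p)+\tfrac{1}{2\beta'}\|y-p\|_2^2\le g(y).$$
Rearranging and using the Lipschitz property of $g$,
$$\tfrac{1}{2\beta'}\|y-p\|_2^2\le g(y)-g(p)\le L_g\|y-p\|_2 .$$
If $y\neq p$, dividing by $\|y-p\|_2$ yields $\|y-p\|_2\le 2\beta' L_g$. This is off by a factor of $2$, so the sharper constant must come from the optimality condition.

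\textbf{Sharper route via subgradients.} Use the first-order condition of the prox in the weakly convex sense, $\beta'^{-1}(y-p)\in\partial g(p)$, where $\partial g$ is the limiting (equivalently, for weakly convex $g$, the Clarke) subdifferential. The same inclusion is used in the remark preceding the algorithm. Because $g$ is $L_g$-Lipschitz on $\mathbb{R}^m$, every such subgradient has Euclidean norm at most $L_g$. This holds either by the standard Clarke bound, or directly: for weakly convex $g$ and $v\in\partial g(p)$ we have $g(p+tw)\ge g(p)+t\langle v,w\rangle-\tfrac{\rho}{2}t^2\|w\|_2^2$. Taking $w=v/\|v\|_2$ and letting $t\downarrow0$, Lipschitzness gives $\|v\|_2\le L_g$. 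Hence $\|y-p\|_2\le\beta' L_g$, and substituting $\beta'=\beta k^{-q}$ gives the stated bound.

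The main obstacle is justifying the inclusion $\beta'^{-1}(y-p)\in\partial g(p)$ and the subgradient-norm bound in the nonconvex setting. I would handle this by writing $g=h-\tfrac{\rho}{2}\|\cdot\|_2^2$ with $h$ convex, which gives $\partial g(p)=\partial h(p)-\rho p$, and applying Fermat's rule to the convex problem $\min_u h(u)+(\tfrac{1}{2\beta'}-\tfrac{\rho}{2})\|u\|_2^2-\tfrac{1}{\beta'}\langle u,y\rangle$. This problem is strongly convex because $\beta'\le 1/(2\rho)$. Should this subdifferential bookkeeping prove delicate, the fallback is the elementary bound $2\beta L_g k^{-q}$, which has the same rate in $k$.
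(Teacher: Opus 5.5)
Your proof is correct and follows the paper's route: the prox optimality condition $\beta_k^{-1}\bigl(Tx_k-\mathrm{prox}_{\beta_k g}(Tx_k)\bigr)=\nabla g^{\beta_k}(Tx_k)\in\partial g\bigl(\mathrm{prox}_{\beta_k g}(Tx_k)\bigr)$, together with the fact that subgradients of an $L_g$-Lipschitz function have norm at most $L_g$, applied with $\beta_k\le\beta$. The paper simply cites Lemma~3.3 of~\cite{bohm2021variable} for these facts, whereas you verify both directly, via the convex decomposition $g=h-\tfrac{\rho}{2}\|\cdot\|_2^2$ with Fermat's rule and via the weak-convexity subgradient inequality; these checks are sound, but they do not change the argument.
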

\begin{proof}
    This is a direct consequence of Lemma 3.3 in~\cite{bohm2021variable}. Since $g$ is $L_g$-Lipschitz, its subgradients at any point are bounded in norm by $L_g$. 
    By the optimality condition defining the proximal point, $\nabla g^{\beta_k}(Tx_k)\in \partial g(\mathrm{prox}_{\beta_k g}(Tx_k))$, so
    $$
    \|Tx_k - \mathrm{prox}_{\beta_k g}(Tx_k)\|_2
    =
    \beta_k \|g^{\beta_k}(Tx_k)\|_2
    \leq \beta_k L_g
    \leq \frac{\beta L_g}{k^{q}}
    $$
\end{proof}

The following theorem summarizes this trade-off for power schedules.

\begin{theorem}[Any-time convergence rate for power schedules]\label{th:1}
    Let $f\in C^{1,1}(\mathbb{R}^n)$ be $L_{\nabla f}$-smooth with respect to $\|\cdot\|$, let $g: \mathbb{R}^m \to \mathbb{R}$ be $L_g$-Lipschitz and $\rho$-weakly convex with $\rho>0$, and let $T\colon\mathbb{R}^n\to\mathbb{R}^m$ be a linear operator. Assume that $F_1^\star := \inf_x F_1(x) > -\infty$ and Algorithm~\ref{alg:Gamons} is run with an oracle satisfying Definition~\ref{def:inexact-lmo} with parameter $\delta\in[0,1)$, and schedules $\beta_k = \beta k^{-q}$, $\gamma_k = \gamma k^{-p}$, where $\gamma>0$, $\beta\in(0, 1/(2\rho)]$, $0<q<p<1$, $p\neq 1/2$, and $2p-q\neq 1$.
    
    Then
    $$
    \min_{j\in[k]}\|\nabla F_j(x_j)\|_*
    \leq
    \mathcal O\!\left(k^{\max\{p-1,\,-p,\,q-p\}}\right).
    $$

    Moreover, if $T$ is surjective, then, with $z_j = \mathrm{prox}_{\beta_j g}(Tx_j)$ and $\tilde x_j := x_j + T^*(TT^*)^{-1}(z_j - Tx_j)$, so that $T\tilde x_j = z_j$ and $\|\tilde x_j - x_j\|_2 \le \sigma_{\min}(T)^{-1}\beta L_g\, j^{-q}$,
    $$
    \min_{j\in[k]}\mathrm{dist}(-\nabla f(\tilde x_j), T^*\partial g(T\tilde x_j))
    \leq
    \mathcal O\!\left(k^{\max\{p-1,\,-p,\,q-p,\,-q\}}\right).
    $$
\end{theorem}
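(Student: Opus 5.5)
The plan is a one-step descent inequality for the time-varying smoothed objectives $F_k$, followed by a telescoping argument. One extra term controls the drift of $F_k$ as $\beta_k$ decreases.

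\textbf{Step 1 (descent on $F_k$).} By Lemma~\ref{lem:moreau_grad}, $g^{\beta_k}$ has a $\beta_k^{-1}$-Lipschitz gradient in $\|\cdot\|_2$. Hence $F_k$ is $L_k$-smooth with respect to $\|\cdot\|$, with $L_k := L_{\nabla f} + c\,\|T\|^2\beta_k^{-1}$, where $c$ collects norm-equivalence constants. Write $\widetilde d_k = d_k + e_k$ with $d_k$ an exact LMO output, so $\langle d_k,\nabla F_k(x_k)\rangle = -\|\nabla F_k(x_k)\|_*$, and $\|e_k\|\le\delta$. The descent lemma then gives
$$
F_k(x_{k+1}) \le F_k(x_k) - \gamma_k(1-\delta)\|\nabla F_k(x_k)\|_* + \tfrac{L_k}{2}\gamma_k^2(1+\delta)^2 .
$$

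\textbf{Step 2 (changing the smoothing).} I need to compare $F_{k+1}(x_{k+1})$ with $F_k(x_{k+1})$. For weakly convex Lipschitz $g$ and $\beta_{k+1}\le\beta_k$, the standard Moreau-envelope monotonicity estimate (as in Lemma 4.1 of \cite{bohm2021variable}) gives
$$
g^{\beta_{k+1}}(y) \le g^{\beta_k}(y) + \tfrac12(\beta_k-\beta_{k+1})L_g^2 .
$$
Since $g^\beta\le g$, we also have $F_{k}\ge F_1 - \tfrac{1}{2}\beta_1 L_g^2$ type lower bounds. In particular, using $g^{\beta}\ge g - \tfrac{\beta}{2}L_g^2$ and $\beta_k\le\beta$, every $F_k$ is bounded below by $F_1^\star - \tfrac{\beta}{2}L_g^2$.

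Telescoping from $1$ to $k$, the $\beta$-increments sum to at most $\tfrac12\beta L_g^2$. This yields
$$
(1-\delta)\sum_{j=1}^k \gamma_j\|\nabla F_j(x_j)\|_* \le F_1(x_1) - F_1^\star + \beta L_g^2 + C\sum_{j=1}^k \gamma_j^2\big(L_{\nabla f} + c\|T\|^2 j^{q}/\beta\big).
$$

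\textbf{Step 3 (rates).} Bound the minimum by the weighted average, using $\sum_{j\le k}\gamma_j \asymp k^{1-p}$ since $p<1$. For the right-hand side, $\sum_j j^{-2p}$ is $\mathcal O(1 + k^{1-2p})$, and $\sum_j j^{q-2p}$ is $\mathcal O(1+k^{1+q-2p})$. The hypotheses $p\ne 1/2$ and $2p-q\ne1$ exclude the logarithmic cases. Dividing by $k^{1-p}$ gives the three exponents:
\begin{itemize}
\item $p-1$ from the constant terms;
\item $-p$ from the $L_{\nabla f}$ term;
\item $q-p$ from the $\beta_j^{-1}$ term.
\end{itemize}
This proves the first claim.

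\textbf{Step 4 (composite proxy).} For surjective $T$, apply \eqref{eq:surjectiveT} at each $j$ together with Theorem~\ref{thm:gap original prox}, which gives $\|Tx_j - z_j\|_2\le\beta L_g j^{-q}$. This yields
$$
\mathrm{dist}(\cdot)\le c_1\|\nabla F_j(x_j)\|_* + C' j^{-q}.
$$
Take the index $j^\ast$ attaining the weighted-average bound. The minimizer of the first term may be at small $j$, where $j^{-q}$ is not small, so I handle this carefully: restrict the averaging to $j\in[\lceil k/2\rceil,k]$. On that range $j^{-q}\le 2^q k^{-q}$ and $\sum\gamma_j\asymp k^{1-p}$ still, so the same Step 3 bound holds on the tail. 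This gives the extra exponent $-q$.

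\textbf{Main obstacle.} I expect Step 2 to be the hardest part: a clean envelope-comparison inequality valid in the weakly convex case, with $\beta\le1/(2\rho)$ ensuring the prox is well defined. I would first try proving it directly from the prox definition, using the subgradient bound $L_g$.
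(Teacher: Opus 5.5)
Your proposal is correct and follows essentially the same route as the paper. The shared steps are: the descent lemma on $F_k$ with the inexact-LMO bounds; an envelope-drift term that telescopes to $\mathcal O(\beta L_g^2)$; the weighted-minimum bound $\sum_{j\le k}\gamma_j\ge\gamma k^{1-p}$ with power-sum estimates; and restriction to indices $j\ge\lceil k/2\rceil$, so that the smoothed gradient and the proximal gap are small at a common index. The differences are minor, and either version suffices. For the drift term, the paper uses Lemma~4.1 of \cite{bohm2021variable}, which carries the factor $\beta_k/\beta_{k+1}\le 2^q$; a direct prox comparison gives you exactly that form, whereas your ratio-free form needs integrating $\frac{d}{d\beta}g^{\beta}=-\frac12\|\nabla g^{\beta}\|_2^2$. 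For the lower bound, the paper bounds $F_{k+1}(x_{k+1})\ge F_1^\star$ directly from monotonicity of the envelope in $\beta$, rather than through $g^{\beta}\ge g-\frac{\beta}{2}L_g^2$ as you do.
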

The proof is deferred to Appendix~\ref{proof:1}.

In the certificate terminology introduced above, the first bound shows that among the first $k$ iterates there is an iterate satisfying a smoothed certificate whose accuracy scales like $k^{\max\{p-1,-p,q-p\}}$. Under surjective $T$, the second bound upgrades this to the stronger composite stationarity proxy. The epoch-wise section turns these asymptotic guarantees into explicit computable certificates.

Theorem~\ref{th:1} highlights the role of the schedules $(p,q)$. The parameter $q$ controls how quickly the Moreau envelope tracks the original composite problem through the proximal gap $\|Tx_k-z_k\|=\mathcal O(k^{-q})$, whereas the smoothness of the surrogate $F_k$ scales like $\beta_k^{-1}$ and therefore worsens when $q$ is too large. The parameter $p$ then determines how aggressively we optimize the smoothed objective. Two regimes are particularly relevant in practice.

\begin{corollary}[Balanced regime]\label{cor:balanced-regime}
Under the assumptions of Theorem~\ref{th:1}, if $(p,q) = (7/12,1/3)$, then
$$
\min_{j\in[k]}\|\nabla F_j(x_j)\|_*
\leq \mathcal O(k^{-1/4}),
$$
and, if $T$ is surjective, with $\tilde x_j$ as in theorem~\ref{th:1} we have
$$
\min_{j\in[k]}\mathrm{dist}(-\nabla f(\tilde x_j), T^*\partial g(T\tilde x_j))
\leq \mathcal O(k^{-1/4}).
$$
\end{corollary}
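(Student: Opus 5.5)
This corollary follows by specializing Theorem~\ref{th:1}, so the plan is to check admissibility of the pair $(p,q)=(7/12,1/3)$ and then evaluate the exponents.

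\textbf{Admissibility.} We verify each hypothesis of Theorem~\ref{th:1} in turn:
\begin{itemize}
\item $0<q=1/3<p=7/12<1$;
\item $p=7/12\neq 1/2$;
\item $2p-q=7/6-1/3=5/6\neq 1$.
\end{itemize}
All remaining assumptions ($L_{\nabla f}$-smoothness of $f$, $L_g$-Lipschitzness and $\rho$-weak convexity of $g$, $F_1^\star>-\infty$, a $\delta$-inexact oracle, $\beta\in(0,1/(2\rho)]$) are inherited verbatim. Hence both conclusions of Theorem~\ref{th:1} hold for this pair.

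\textbf{First bound.} The exponents are
$$
p-1=-5/12,\qquad -p=-7/12,\qquad q-p=-1/4.
$$
Their maximum is $-1/4$, since $-1/4=-3/12$ exceeds both $-5/12$ and $-7/12$. This yields $\min_{j\in[k]}\|\nabla F_j(x_j)\|_*\le\mathcal O(k^{-1/4})$.

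\textbf{Second bound.} Under surjectivity of $T$, the composite bound adds the exponent $-q=-1/3$. Since $-1/3<-1/4$, the maximum over $\{p-1,-p,q-p,-q\}$ is still $-1/4$, which gives the composite stationarity rate $\mathcal O(k^{-1/4})$ for the points $\tilde x_j$ defined in Theorem~\ref{th:1}.

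\textbf{Remarks.} There is no real obstacle here; the only care needed is confirming the excluded cases $p=1/2$ and $2p-q=1$ are avoided, since these presumably produce logarithmic factors in the underlying sums $\sum_j \gamma_j$ and $\sum_j\gamma_j^2/\beta_j$. It is also worth noting why the regime is called balanced. The choice equalizes the binding terms as far as possible: $q-p=-1/4$ governs both rates, while the fidelity term $-q$ is strictly faster. A complementary observation is that $p-1=-5/12$ and $-p=-7/12$ are not active. The corollary therefore reduces to this exponent bookkeeping.
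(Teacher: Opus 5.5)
Your proof is correct and follows the paper's route: the corollary is an immediate specialization of Theorem~\ref{th:1}, and your admissibility checks ($0<1/3<7/12<1$, $p\neq 1/2$, $2p-q=5/6\neq 1$) and exponent bookkeeping ($\max\{-5/12,-7/12,-1/4\}=-1/4$, unchanged after adding $-q=-1/3$) are right. One aside, which does not affect the argument: your remark that this choice equalizes the binding terms ``as far as possible'' is inaccurate. For example, $(p,q)=(0.66,0.33)$ is also admissible, since $2p-q=0.99\neq 1$, and it gives the better exponent $-0.33$ for both quantities.
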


\begin{corollary}[Smoothed-gradient-favoring regime]\label{cor:smoothed-gradient-regime}
Under the assumptions of Theorem~\ref{th:1}, if $(p,q) = (2/3,1/4)$, then
$$
\min_{j\in[k]}\|\nabla F_j(x_j)\|_*
\leq \mathcal O(k^{-1/3}),
$$
and, if $T$ is surjective,with $\tilde x_j$ as in theorem~\ref{th:1} we have
$$
\min_{j\in[k]}\mathrm{dist}(-\nabla f(\tilde x_j), T^*\partial g(T\tilde x_j))
\leq \mathcal O(k^{-1/4}).
$$
\end{corollary}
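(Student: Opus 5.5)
Corollary~\ref{cor:smoothed-gradient-regime} follows by substituting $(p,q)=(2/3,1/4)$ into Theorem~\ref{th:1}. Before doing so, I will check that this pair satisfies the theorem's hypotheses. We have $0<q=1/4<p=2/3<1$. Also $p\neq 1/2$. Finally $2p-q = 4/3-1/4 = 13/12 \neq 1$. All other assumptions are inherited verbatim ($f$ smooth, $g$ Lipschitz and $\rho$-weakly convex, $F_1^\star>-\infty$, a $\delta$-inexact oracle, $\beta\in(0,1/(2\rho)]$). So both conclusions of Theorem~\ref{th:1} are available.

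For the smoothed-gradient bound, I evaluate the three exponents: $p-1=-1/3$, $-p=-2/3$, and $q-p=1/4-2/3=-5/12$. Their maximum is $-1/3$. Hence $\min_{j\in[k]}\|\nabla F_j(x_j)\|_* \le \mathcal O(k^{-1/3})$.

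For the composite proxy under surjective $T$, the exponent set gains the extra term $-q=-1/4$. The maximum of $\{-1/3,-2/3,-5/12,-1/4\}$ is $-1/4$. This gives $\mathcal O(k^{-1/4})$ for $\min_{j\in[k]}\mathrm{dist}(-\nabla f(\tilde x_j),T^*\partial g(T\tilde x_j))$. Here $\tilde x_j$ is exactly as defined in Theorem~\ref{th:1}, and the bound $\|\tilde x_j-x_j\|_2\le\sigma_{\min}(T)^{-1}\beta L_g j^{-q}$ comes from Theorem~\ref{thm:gap original prox}.

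There is no genuine obstacle: all the work is in Theorem~\ref{th:1}. The only points needing care are the excluded cases $p\ne 1/2$ and $2p-q\ne 1$, which correspond to logarithmic factors in the underlying sums. Verifying that $13/12\neq 1$ ensures the stated rates hold without extra $\log k$ factors.
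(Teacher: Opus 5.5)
Your proposal is correct and matches the paper, which gives no separate argument and treats this corollary as a direct instantiation of Theorem~\ref{th:1}. Your steps are exactly what is needed: verify $0<1/4<2/3<1$, $p\neq 1/2$ and $2p-q=13/12\neq 1$, then evaluate $\max\{-1/3,-2/3,-5/12\}=-1/3$ and $\max\{-1/3,-2/3,-5/12,-1/4\}=-1/4$.
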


\begin{corollary}\label{co:1}
    With the same assumptions, but with $g$ convex, Theorem~\ref{th:1} and Corollaries~\ref{cor:balanced-regime} and~\ref{cor:smoothed-gradient-regime} remain valid for any $\beta>0$.
\end{corollary}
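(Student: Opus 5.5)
The strategy is to re-run the proof of Theorem~\ref{th:1} and observe that the upper bound on $\beta$ is used only to guarantee that $\nabla g^{\beta_k}$ is well defined and $1/\beta_k$-Lipschitz, and that $\mathrm{prox}_{\beta_k g}$ is single-valued. When $g$ is convex, i.e.\ $\rho=0$, Lemma~\ref{lem:moreau_grad} supplies exactly these properties for every $\beta>0$: $g^{\beta_k}$ is continuously differentiable, $\nabla g^{\beta_k}(y)=\beta_k^{-1}(y-\mathrm{prox}_{\beta_k g}(y))$, and this gradient is $\beta_k^{-1}$-Lipschitz. Theorem~\ref{thm:gap original prox} is likewise already stated for arbitrary $\beta>0$ when $\rho=0$. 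Therefore the proximal-gap bound $\|Tx_k-z_k\|_2\le \beta L_g k^{-q}$ holds unchanged.

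The key steps are as follows.

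\begin{enumerate}
\item Check that each $F_k=f+g^{\beta_k}\circ T$ is $L_k$-smooth with respect to $\|\cdot\|$, where $L_k = L_{\nabla f}+c\,\|T\|^2\beta_k^{-1}$ and $c$ is a norm-equivalence constant. This only requires the Lipschitz property of $\nabla g^{\beta_k}$, which the convex case of Lemma~\ref{lem:moreau_grad} supplies.

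\item Check the descent inequality for one step with the $\delta$-inexact LMO. This step uses only $\langle d,\nabla F_k(x_k)\rangle=-\|\nabla F_k(x_k)\|_*$, $\|\widetilde d_k-d\|\le\delta$ and $\|\widetilde d_k\|\le 1+\delta$, none of which involve $\rho$.

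\item Check the control of the drift $F_{k+1}(x)-F_k(x)$ caused by decreasing $\beta_k$. For $\beta'\le\beta$ one has
$$
0\le g^{\beta'}(y)-g^{\beta}(y)\le \tfrac12(\beta-\beta')L_g^2,
$$
which follows from the variational definition of the envelope together with the bound $L_g$ on the subgradients. For convex $g$ this holds for all $\beta>0$; in the weakly convex case it was the restriction $\beta\le 1/(2\rho)$ that made the argument valid.
\end{enumerate}

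Summing these inequalities, telescoping, and using $F_1^\star>-\infty$ then gives the same power-schedule estimates as before. The exponents $p-1$, $-p$ and $q-p$ appear exactly as in the proof of Theorem~\ref{th:1}. The additional exponent $-q$ for the surjective case comes from \eqref{eq:surjectiveT}, which uses only $\nabla g^{\beta}(Tx)\in\partial g(z_\beta(x))$; this inclusion holds for convex $g$ and every $\beta>0$.

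Corollaries~\ref{cor:balanced-regime} and~\ref{cor:smoothed-gradient-regime} involve nothing beyond substituting specific values of $(p,q)$ into Theorem~\ref{th:1}. They therefore transfer immediately once the theorem is established for arbitrary $\beta>0$.

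The main obstacle is the drift bound in the third step. I would need to confirm that the appendix proof never invokes $\rho$ or the constraint $\beta\le 1/(2\rho)$ anywhere else, for example through a weakly-convex-specific form of the bound $g^{\beta'}-g^{\beta}\le \frac{\beta-\beta'}{2}\|\nabla g^{\beta'}\|^2$. To settle this, I would rederive that inequality directly in the convex case. For any $u$,
$$
g^{\beta'}(y)\le g(u)+\tfrac{1}{2\beta'}\|y-u\|^2 .
$$
Taking $u=\mathrm{prox}_{\beta g}(y)$ and using $\|y-u\|_2=\beta\|\nabla g^\beta(y)\|_2\le\beta L_g$ gives
$$
g^{\beta'}(y)-g^\beta(y)\le \Big(\tfrac1{2\beta'}-\tfrac1{2\beta}\Big)\beta^2L_g^2 = \frac{\beta(\beta-\beta')}{2\beta'}L_g^2 .
$$
With $\beta_k=\beta k^{-q}$ this is of order $\beta q k^{-q-1}L_g^2$, which is summable for $q>0$. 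This is exactly the order needed, and the derivation is valid for every $\beta>0$.
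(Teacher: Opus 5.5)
Your proposal is correct and follows the paper's argument: the paper likewise re-runs the proof of Theorem~\ref{th:1} verbatim, noting that the smoothing-drift term is the only thing that changes and that convexity of $g$ removes any restriction on $\beta$. The one difference is the source of the drift bound. The paper takes the sharper bound $\frac{\beta_k-\beta_{k+1}}{2}L_g^2$ from the convex branch of Lemma~\ref{lemma:k to k+1} (Property 7.3 of~\cite{yurtsever2019conditional}), whereas your direct derivation gives the ratio form $\frac{\beta_k(\beta_k-\beta_{k+1})}{2\beta_{k+1}}L_g^2$ already used in Theorem~\ref{th:1}; this is equally sufficient because $\beta_k/\beta_{k+1}\le 2^q$.
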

The proof is deferred to Appendix~\ref{proof:co1}.

A fixed-horizon choice can also be used to have a better theoretical bound when the horizon $K$ is known in advance.
\begin{theorem}[Horizon-dependent convergence rate]\label{th:1 fixed K}
    We adopt the same assumptions as in Theorem~\ref{th:1} on $f$, $g$, $T$, and the oracle, but run Algorithm~\ref{alg:Gamons} with the constant schedules $\beta_k = \beta K^{-1/3}$, $\gamma_k = \gamma K^{-2/3}$, where $\gamma>0$, $\beta\in(0, 1/(2\rho)]$ and $K\in \mathbb{N}^*$ a fixed horizon.
    Then
                $$
    \min_{j\in[K]}
    \|\nabla F_j(x_j)\|_*
        \leq 
        \mathcal{O}(K^{-1/3})
    $$
    Moreover, if $T$ is surjective with $\tilde x_j$ as in theorem~\ref{th:1} we have
    $$
\min_{j\in[K]}\mathrm{dist}(-\nabla f(\tilde x_j), T^*\partial g(T\tilde x_j))
    \leq
        \mathcal{O}(K^{-1/3})
    $$
\end{theorem}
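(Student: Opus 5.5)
With constant schedules the smoothing parameter $\beta_K:=\beta K^{-1/3}$ is the same at every iteration. So $F_j=F_{\beta_K}$ is one fixed function and there is no drift between consecutive surrogates. The plan is a standard descent-lemma argument for normalized steepest descent on this single function, followed by telescoping.

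\textbf{Step 1 (smoothness of the surrogate).} By Lemma~\ref{lem:moreau_grad}, $\nabla g^{\beta_K}$ is $\beta_K^{-1}$-Lipschitz in $\|\cdot\|_2$. Passing to $\|\cdot\|\to\|\cdot\|_*$ through $T$ and the norm-equivalence constants, $F_{\beta_K}$ is $L_K$-smooth with respect to $\|\cdot\|$, where
$$
L_K := L_{\nabla f} + C_T\,\beta_K^{-1} = L_{\nabla f}+C_T\beta^{-1}K^{1/3}
$$
and $C_T$ depends only on $\|T\|$ and the norm constants.

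\textbf{Step 2 (one-step descent with an inexact oracle).} Let $\varphi_k=\nabla F_{\beta_K}(x_k)$ and let $d_k$ be the exact LMO output guaranteed by Definition~\ref{def:inexact-lmo}. Since $\langle \varphi_k,d_k\rangle=-\|\varphi_k\|_*$ and $|\langle\varphi_k,\widetilde d_k-d_k\rangle|\le\delta\|\varphi_k\|_*$, and since $\|\widetilde d_k\|\le 1+\delta\le 2$, the descent lemma gives
$$
F_{\beta_K}(x_{k+1})\le F_{\beta_K}(x_k)-(1-\delta)\gamma_K\|\varphi_k\|_*+2L_K\gamma_K^2 ,
$$
with $\gamma_K:=\gamma K^{-2/3}$.

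\textbf{Step 3 (telescoping).} Sum over $k=1,\dots,K$ and divide by $(1-\delta)K\gamma_K$:
$$
\min_{j\in[K]}\|\varphi_j\|_*\le \frac{F_{\beta_K}(x_1)-\inf F_{\beta_K}}{(1-\delta)K\gamma_K}+\frac{2L_K\gamma_K}{1-\delta}.
$$

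\textbf{Step 4 (uniform control of the initial gap).} This step needs care, because $F_{\beta_K}$ changes with $K$ and the first term must be bounded independently of $K$.
\begin{itemize}
\item For the infimum: the Moreau envelope is nondecreasing as $\beta$ decreases. Since $\beta_K\le\beta=\beta_1$, we get $F_{\beta_K}\ge F_1$, hence $\inf F_{\beta_K}\ge F_1^\star$.
\item For the starting value: $F_{\beta_K}(x_1)\le F(x_1)$.
\end{itemize}
The numerator is therefore at most $F(x_1)-F_1^\star$, a constant.

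\textbf{Step 5 (rates).} Since $K\gamma_K=\gamma K^{1/3}$, the first term is $\mathcal O(K^{-1/3})$. The second term is $L_K\gamma_K=\mathcal O(K^{1/3}\cdot K^{-2/3})=\mathcal O(K^{-1/3})$. This proves the first claim.

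\textbf{Step 6 (surjective case).} Theorem~\ref{thm:gap original prox}, applied with the constant $\beta_K$, gives
$$
\|Tx_j-z_{\beta_K}(x_j)\|_2\le\beta_K L_g=\beta L_gK^{-1/3}\quad\text{for every }j.
$$
Inequality \eqref{eq:surjectiveT} then bounds the stationarity proxy at $\tilde x_j$ by $c_1\|\nabla F_{\beta_K}(x_j)\|_*+\mathcal O(K^{-1/3})$. Taking the index $j$ that achieves the minimum in Step 5 gives $\mathcal O(K^{-1/3})$.

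The main obstacle is Step 4 together with tracking the constants in $L_K$. Everything else is routine once the surrogate is recognized as fixed.
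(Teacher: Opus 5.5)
Your proposal is correct and follows essentially the same argument as the paper. Both use a descent-lemma and telescoping bound with $\gamma_k=\gamma K^{-2/3}$ and $L_{F_k}=\mathcal O(K^{1/3})$, control $F_{\beta_K}(x_1)-\inf F_{\beta_K}$ independently of $K$ via $g^{\beta_K}\le g$ and monotonicity of the envelope in $\beta$, and use the uniform prox-gap bound $\beta L_g K^{-1/3}$ so that no index alignment is needed in the surjective case. The only cosmetic difference is that the paper routes the argument through Lemma~\ref{lemma:k to k+1} and carries a smoothing-change term bounded by $\beta_1 L_g^2$, which is actually zero here, whereas you drop it from the outset by observing that the surrogate is fixed.
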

The proof is deferred to Appendix~\ref{proof:1 fixed K}.

\subsection{Epoch-wise certification}
The bounds proven in the previous section control the minimum smoothed-gradient norm over a long prefix of iterates. Therefore it is possible that each certificate is minimized for a different index. For certification, however, we also need the proximal gap $\|Tx_j-z_j\|_2$ to be small at a comparable index. If the smallest smoothed-gradient norm is attained too early, these two requirements may not line up. The purpose of the epoch-wise variant is therefore to produce a computable $\epsilon$-proxy certificate, that is, an iterate for which both $\|\nabla F_j(x_j)\|_*$ and $\|Tx_j-z_j\|_2$ are below a prescribed tolerance.

To ensure this, we propose the \textit{epoch-wise} version of MELMO in Algorithm~\ref{alg:epoch_vs}. The algorithm itself is written for generic schedules $(\beta_k,\gamma_k)$; below we analyze two different instantiations. The first keeps the power schedules from the main theorem and yields an $\mathcal O(\epsilon^{-4})$ certificate complexity. The second specializes the schedules to be constant within each epoch and, together with a restart at the start of each epoch from a point whose (computable) objective value is no worse than the initial one, improves this to $\mathcal O(\epsilon^{-3})$.

\begin{algorithm}[ht!]
  \begin{algorithmic}
    \REQUIRE $x_1 \in \R^d$, schedules $(\beta_k)_{k\ge 1}$ and $(\gamma_k)_{k\ge 1}$, tolerance $\epsilon>0$, and an inexact oracle $\widetilde{\mathrm{lmo}}$ satisfying Definition~\ref{def:inexact-lmo}
    \FOR{$l=0,1,\dotsc$}
    \STATE $S_l \leftarrow \infty$, $j_l \leftarrow 2^l$
    \FOR{$k=2^l,2^l+1,\dotsc,2^{l+1}-1$}
    \STATE $\nabla  F_k(x_k) \leftarrow \nabla f(x_k) + T^*\nabla g^{\beta_k}(Tx_k)$
    \STATE $\widetilde{d}_{k} \leftarrow \widetilde{\mathrm{lmo}}(\nabla  F_k(x_k))$
    \STATE $x_{k+1} \leftarrow x_k + \gamma_k \widetilde{d}_k$
    \IF{$ \Vert \nabla F_{k+1}(x_{k+1})\Vert_* \le S_l$}
    \STATE $S_l \leftarrow \Vert \nabla F_{k+1}(x_{k+1})\Vert_*$
    \STATE $j_l \leftarrow k+1$
    \STATE $z_{k+1} \leftarrow \prox{\beta_{k+1} g}{Tx_{k+1}}$
    \IF{$S_l \le \epsilon$ and $ \Vert Tx_{k+1} - z_{k+1} \Vert_2 \le \epsilon$}
    \STATE \RETURN $x_{k+1}$
    \ENDIF
    \ENDIF
    \ENDFOR
    \ENDFOR
  \end{algorithmic}
  \caption{Epoch-wise MELMO}%
  \label{alg:epoch_vs}%
\end{algorithm}

For each epoch $l$, Algorithm~\ref{alg:epoch_vs} keeps track of the lowest value of the smoothed function's gradient norm $S_l$ within that epoch, together with the corresponding iterate $x_{j_l}$. Whenever the currently best iterate in the epoch satisfies both parts of the $\epsilon$-proxy certificate, the method stops and returns that certified point.

\begin{theorem}[Power-schedule epoch certificate]
\label{th:Epoch-wise}
    Let the assumptions of Theorem~\ref{th:1} hold. If Algorithm~\ref{alg:epoch_vs} is run with the power schedules
    $$
    \beta_k = \beta k^{-1/4}
    \qquad\text{and}\qquad
    \gamma_k = \gamma k^{-2/3},
    $$
    then after $\mathcal O(\epsilon^{-4})$ iterations it returns an iterate $x_{j_l}$ such that, with $z_{j_l} = \mathrm{prox}_{\beta_{j_l} g}(Tx_{j_l})$, the pair $(x_{j_l},z_{j_l})$ is an $\epsilon$-proxy certificate.
\end{theorem}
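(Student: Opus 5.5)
The plan is to run the descent analysis behind Theorem~\ref{th:1} on a single epoch $\{2^l,\dots,2^{l+1}-1\}$ rather than on the whole prefix. With $(p,q)=(2/3,1/4)$ this gives a minimum smoothed-gradient norm of order $2^{-l/3}$ inside epoch $l$. Combined with the gap bound of Theorem~\ref{thm:gap original prox}, which gives $\|Tx_j-z_j\|_2\le\beta L_g j^{-1/4}\le \beta L_g 2^{-l/4}$ for every $j$ in the epoch, both tests in Algorithm~\ref{alg:epoch_vs} must pass once $2^{-l/4}\lesssim\epsilon$, that is, after $\mathcal O(\epsilon^{-4})$ iterations.

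\textbf{Step 1: one-step descent.} Since $f$ is $L_{\nabla f}$-smooth and $g^{\beta_k}$ has $1/\beta_k$-Lipschitz gradient (Lemma~\ref{lem:moreau_grad}), $F_k$ is $L_k$-smooth with $L_k=L_{\nabla f}+c\|T\|^2/\beta_k$, where $c$ absorbs the norm-equivalence constants. Using $\langle \nabla F_k(x_k),\widetilde d_k\rangle\le -\|\nabla F_k(x_k)\|_*+\delta\|\nabla F_k(x_k)\|_*$ and $\|\widetilde d_k\|\le 1+\delta$, I get
\[
F_k(x_{k+1})\le F_k(x_k)-(1-\delta)\gamma_k\|\nabla F_k(x_k)\|_*+\tfrac{L_k(1+\delta)^2}{2}\gamma_k^2 .
\]

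\textbf{Step 2: change of smoothing parameter.} I then pass from $F_k$ to $F_{k+1}$ using the standard monotonicity estimate for Moreau envelopes (as in \cite{bohm2021variable}): for $\beta_{k+1}\le\beta_k$,
\[
g^{\beta_{k+1}}(y)\le g^{\beta_k}(y)+\tfrac12(\beta_k-\beta_{k+1})L_g^2 .
\]
Summing over $k\in[2^l,2^{l+1}-1]$ telescopes to
\[
(1-\delta)\sum_k\gamma_k\|\nabla F_k(x_k)\|_*\le F_{2^l}(x_{2^l})-F^\star_{\rm low}+\tfrac{L_g^2}{2}\beta_{2^l}+C\sum_k\gamma_k^2/\beta_k .
\]
Here $F^\star_{\rm low}$ is a uniform lower bound, obtained from $F_k\ge F_1^\star$ minus the same $L_g^2\beta_1/2$ correction.

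\textbf{Step 3: the obstacle.} The main difficulty is the term $F_{2^l}(x_{2^l})$, because nothing so far prevents it from growing across epochs. I would control it by applying the same telescoped inequality from iteration $1$. Dropping the nonnegative gradient terms gives
\[
F_{k}(x_k)\le F_1(x_1)+\tfrac{L_g^2}{2}\beta+C\sum_{j<k}\gamma_j^2/\beta_j .
\]
With $2p-q=13/12>1$, this sum converges, so $F_{2^l}(x_{2^l})$ is bounded uniformly in $l$. The same summability makes the error term inside the epoch $\mathcal O(1)$.

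\textbf{Step 4: counting iterations.} Since $\sum_{k\in\text{epoch}}\gamma_k\asymp\gamma\,2^{l(1-p)}=\gamma 2^{l/3}$, the minimum over the epoch satisfies $\min_k\|\nabla F_k(x_k)\|_*\le C'2^{-l/3}$. The tracked index $j_l$ achieves at most this value, up to the index shift by one in the algorithm, which only re-indexes the epoch. Theorem~\ref{thm:gap original prox} gives the gap bound at $j_l$ automatically. Choosing $l^\star=\lceil\max\{3\log_2(C'/\epsilon),\,4\log_2(\beta L_g/\epsilon)\}\rceil$ guarantees termination by epoch $l^\star$. This costs $2^{l^\star+1}=\mathcal O(\epsilon^{-4})$ total iterations, with the $\epsilon^{-4}$ term dominating for small $\epsilon$.

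One subtlety is that the algorithm checks the gap only at the running minimizer. This is harmless because the gap bound holds for every index in the epoch.
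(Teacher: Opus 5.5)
Your proposal is correct and follows essentially the same route as the paper: a uniform bound on $F_k(x_k)$ from the summability of $\sum k^{-4/3}$ and $\sum k^{-13/12}$, a per-epoch telescoping that gives $\min\|\varphi_j\|_*\lesssim 2^{-l/3}$, the gap bound $\beta L_g 2^{-l/4}$ at every index of the epoch, and termination at the first index where the running minimum falls below $\epsilon$. The only adjustments are cosmetic. Sum over the window $\{2^l+1,\dots,2^{l+1}\}$ that the algorithm actually evaluates, as the paper does. The smoothing-change bound you quote is sharper than Lemma 4.1 of \cite{bohm2021variable}, which carries the factor $\beta_k/\beta_{k+1}\le 2^{1/4}$, but either version only affects constants. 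Finally, the correction in $F^\star_{\rm low}$ is unnecessary, since $F_k\ge F_1\ge F_1^\star$ pointwise.
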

The proof is deferred to Appendix~\ref{proof:Epoch-wise}.

\begin{remark}

If, in addition, $T$ is surjective, the translation recalled in the stationarity subsection turns the output of Theorem~\ref{th:Epoch-wise} into near-stationarity at the point $\tilde x_{j_l} = x_{j_l} + T^*(TT^*)^{-1}(z_{j_l}-Tx_{j_l})$: both $\mathrm{dist}(-\nabla f(\tilde x_{j_l}),T^*\partial g(T\tilde x_{j_l}))$ and $\|\tilde x_{j_l}-x_{j_l}\|_2$ are $\mathcal O(\epsilon)$. Note that here the two certificate quantities are controlled at the same index $j_l$ by construction, so no further alignment argument is needed.
\end{remark}

\begin{proposition}[Epoch-constant schedule specialization with restarts]
\label{prop:epoch-wise-fast}
Under the assumptions of Theorem~\ref{th:1}, consider the specialization of Algorithm~\ref{alg:epoch_vs} in which, throughout epoch $l$, one uses
$$
\beta_k = \beta 2^{-(l+1)/3}
\qquad\text{and}\qquad
\gamma_k = \gamma 2^{-2(l+1)/3}.
$$
All certificate evaluations inside epoch $l$ (the gradient norms and proximal gaps in the stopping rule) are performed with the smoothing parameter $\beta 2^{-(l+1)/3}$ of the current epoch, and, at the start of each epoch, the iterate is reset to a point $x^0_l$ satisfying $F(x^0_l)\le F(x_1)$ --- for instance $x^0_l = x_1$, or the iterate with the smallest composite objective value $F$ observed so far\footnote{Note that $F = f + g\circ T$ is directly computable.}.
Then, after $\mathcal O(\epsilon^{-3})$ iterations the algorithm returns an iterate $x_{j_l}$ such that, with $z_{j_l} = \mathrm{prox}_{\beta_{j_l} g}(Tx_{j_l})$, the pair $(x_{j_l},z_{j_l})$ is an $\epsilon$-proxy certificate.

\end{proposition}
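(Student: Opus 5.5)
Within one epoch the schedules are constant, so I will analyze a single epoch $l$ as a fixed-horizon run of Algorithm~\ref{alg:Gamons}, in the spirit of Theorem~\ref{th:1 fixed K}. Write $N_l := 2^l$ for the epoch length, $\beta^{(l)} := \beta 2^{-(l+1)/3}$ and $\gamma^{(l)} := \gamma 2^{-2(l+1)/3}$. Note that $N_l^{-1/3}$ and $2^{-(l+1)/3}$ differ only by the constant $2^{-1/3}$. The plan has three steps: a per-epoch descent bound, a restart-controlled initial gap, and a geometric count of iterations.

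\textbf{Per-epoch descent.} By Lemma~\ref{lem:moreau_grad}, $F_{\beta^{(l)}}$ is $L_l$-smooth with respect to $\|\cdot\|$ with
$$
L_l = L_{\nabla f} + c_2^2\|T\|_{2\to 2}^2/\beta^{(l)} = \mathcal O(2^{l/3}),
$$
and $\beta^{(l)}\le \beta\le 1/(2\rho)$. For the inexact direction I use Definition~\ref{def:inexact-lmo}: there is an exact $d$ with $\langle \nabla F, d\rangle = -\|\nabla F\|_*$ and $\|\widetilde d-d\|\le\delta$. Hence
$$
\langle \nabla F,\widetilde d\rangle \le -(1-\delta)\|\nabla F\|_*, \qquad \|\widetilde d\|\le 1+\delta\le 2 .
$$
The descent lemma then gives
$$
F_{\beta^{(l)}}(x_{k+1}) \le F_{\beta^{(l)}}(x_k) - \gamma^{(l)}(1-\delta)\|\nabla F_{\beta^{(l)}}(x_k)\|_* + 2L_l(\gamma^{(l)})^2 .
$$
Summing over the $N_l$ steps of the epoch and dividing,
$$
\min_{k\in\text{epoch } l}\|\nabla F_{\beta^{(l)}}(x_k)\|_* \le \frac{F_{\beta^{(l)}}(x_l^0)-\inf F_{\beta^{(l)}}}{(1-\delta)N_l\gamma^{(l)}} + \frac{2L_l\gamma^{(l)}}{1-\delta}.
$$
The second term is $\mathcal O(2^{l/3}2^{-2l/3}) = \mathcal O(2^{-l/3})$. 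The first term's denominator is $N_l\gamma^{(l)}\asymp 2^{l/3}$.

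\textbf{Initial gap — the main obstacle.} I need the numerator to be bounded uniformly in $l$; this is exactly where the restart enters. For the upper bound, $g^{\beta}\le g$ gives $F_{\beta^{(l)}}(x_l^0)\le F(x_l^0)\le F(x_1)$. For the lower bound, I use that $g^\beta$ is nondecreasing as $\beta\downarrow 0$: since $\beta^{(l)}\le\beta\le \beta_1$ (the parameter defining $F_1$), we get $F_{\beta^{(l)}}\ge F_1\ge F_1^\star$. If the paper's $F_1$ is instead built with a different $\beta_1$, I fall back on the Lipschitz bound $g - g^\beta\le \beta L_g^2/2$. Either way the numerator is at most $C := F(x_1)-F_1^\star+\beta L_g^2/2$, independent of $l$, so the minimum smoothed gradient norm in epoch $l$ is $\mathcal O(2^{-l/3})$. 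Without the restart, the objective could increase across epochs (through the $\gamma^2$ terms and the changing $\beta$), so this uniform control is the delicate point.

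\textbf{Certificate and counting.} The algorithm tracks the running minimum $S_l$ and evaluates the proximal gap at that same index. For the gap, Theorem~\ref{thm:gap original prox}'s argument gives $\|Tx-z\|_2\le \beta^{(l)}L_g = \mathcal O(2^{-l/3})$ at every point of the epoch. So both quantities at $j_l$ are at most $C' 2^{-l/3}$. Two small index issues need care: the minimum over the epoch includes $x_{k+1}$ up to one index shift, and the initial point must be handled either by also checking it or by noting that the bound holds over indices $k+1$ after telescoping one fewer step. With both in hand, the stopping test triggers once $C'2^{-l/3}\le\epsilon$, i.e.\ at $l^\star = \lceil 3\log_2(C'/\epsilon)\rceil$. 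The total iteration count is $\sum_{l\le l^\star}2^l < 2^{l^\star+1} = \mathcal O(\epsilon^{-3})$.
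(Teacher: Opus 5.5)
Your proposal is correct and follows essentially the same route as the paper: each epoch is treated as a self-contained fixed-horizon run whose initial gap is bounded uniformly in $l$ via $F_{(l)}(x^0_l)\le F(x^0_l)\le F(x_1)$ and $F_{(l)}\ge F_1\ge F_1^\star$. Together with $2^l\gamma_{(l)}\asymp (2^l)^{1/3}$ and $2^l L_{(l)}\gamma_{(l)}^2=\mathcal O(1)$, this gives $S_l=\mathcal O(2^{-l/3})$, the proximal gap is at most $\beta_{(l)}L_g$ throughout the epoch, and summing the geometric epoch lengths yields $\mathcal O(\epsilon^{-3})$; the paper handles the index shift you flag by dropping the nonnegative term of the restart point from the sum and using $2^l-1\ge 2^{l-1}$ for $l\ge 1$.
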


\begin{remark}
If $T$ is surjective, Proposition~\ref{prop:epoch-wise-fast} likewise yields control of the composite stationarity proxy at the returned iterate.
\end{remark}

\section{Experiments}\label{sec:exp}

We evaluate MELMO on three tasks: sparse low-rank matrix factorization, total variation image denoising and masked matrix recovery\footnote{Code and datasets are available at \url{https://github.com/Farid-Najar/MELMO}.}. All runs are deterministic (no stochastic gradients) and use a single fixed set of method-level hyperparameters per algorithm, listed in Appendix Tables~\ref{tab:hyperparameters_separate}, \ref{tab:hyperparameters_mask} and~\ref{tab:hyperparameters_denoising}; no per-dataset tuning is performed. Each method is run from the same random initialization for every (dataset, rank) pair. For the low-rank experiments, Table~\ref{tab:performance} reports the final penalized objective value~\eqref{eq:pen_general_problem_2factors}, while Appendix Table~\ref{tab:reconstruction loss} reports the reconstruction loss $\|Y-WH\|_F^2$ separately, so that the effect of the regularizer can be distinguished from approximation quality.

\subsection{Sparse low-rank matrix factorization}\label{subsec:low_rank}
\begin{table*}[ht!]
\centering
\caption{Performance comparison of the algorithms across five datasets, each with three ranks. Values represent the final penalized objective $F$.}
\label{tab:performance}
{\small
\setlength{\tabcolsep}{1pt}
\begin{tabular}{l l c c c c}
\toprule
\textbf{Dataset} & \textbf{rank} & \shortstack{\textbf{Sub-}\\\textbf{gradient}} & \shortstack{\textbf{Variable}\\\textbf{smoothing}} & \shortstack{\textbf{MELMO}\\$(p = 7/12, q = 1/3)$} & \shortstack{\textbf{MELMO}\\$(p = 2/3, q = 1/4)$}\\
\midrule

\multirow{3}{*}{Camera} 
& 10 
& $521.9$ 
& $448.5$ 
& $\textbf{390.4}$
& $441.6$
\\

& 20 
& $434.6$ 
& $383.9$ 
& $\textbf{272.3}$
& $347.8$
\\

& 30 
& $406.8$ 
& $377.8$ 
& $\textbf{238.4}$
& $322.8$
\\
\midrule

\multirow{3}{*}{Spectrometer} 
& 10 
& $85.2$ 
& $83.6$ 
& $\textbf{52.2}$
& $67.5$
\\

& 20 
& $87.4$
& $78.0$
& $\textbf{64.3}$
& $108.1$
\\

& 30 
& $\textbf{87.7}$ 
& $87.9$ 
& $89.7$
& $113.3$
\\

\midrule

\multirow{3}{*}{Olivetti}
& 10 
& $1.178\times10^4$ 
& $1.206\times10^4$ 
& $\textbf{1.143}\times10^4$
& $1.287\times10^5$
\\

& 20 
& $\textbf{8.508}\times10^3$ 
& $1.074\times10^4$ 
& $8.621\times10^3$
& $1.211\times10^5$
\\

& 30 
& $\textbf{6.914}\times10^3$ 
& $1.014\times10^4$ 
& $7.516\times10^3$
& $9.376\times10^4$
\\
\midrule

\multirow{3}{*}{Football} 
& 10 
& $620.7$ 
& $572.3$ 
& $\textbf{569.5}$
& $573.1$
\\

& 20 
& $495.2$ 
& $400.5$ 
& $\textbf{395.5}$
& $401.3$
\\

& 30 
& $420.8$ 
& $289.9$ 
& $\textbf{283.5}$
& $291.1$
\\
\midrule

\multirow{3}{*}{Low rank synthetic} 
& 10 
& $72.7$ 
& $87.6$ 
& $48.1$
& $\textbf{47.9}$
\\

& 20 
& $71.1$ 
& $87.2$ 
& $\textbf{49.1}$
& $50.8$
\\

& 30 
& $88.5$ 
& $101.3$ 
& $70.1$
& $\textbf{67.5}$
\\
\bottomrule
\end{tabular}
}
\end{table*}

The general low-rank decomposition problem consists of approximating a given matrix$Y \in \mathbb{R}^{m \times n}$ by a matrix $X$ of rank $r < \min(m, n)$. To find the best such approximation, it is common to consider the following optimization problem:    
\begin{equation}
\min_{X} \| Y - X \|_F^2 \quad \text{s.t.} \quad \text{rank}(X) \leq r,
\label{eq:general_problem}
\end{equation}
where $\| . \|_F$ denotes the Frobenius norm.
To enforce the rank constraint on $X$, a commonly used approach is to write the matrix as $X = WH$ with $W \in \mathbb{R}^{m \times r}$ and $H \in \mathbb{R}^{r \times n}$. Thus, the optimization problem \eqref{eq:general_problem} can be reformulated as follows: 
\begin{equation}
    \label{eq:general_problem_2factors}
    \min_{W, H}{\| Y - WH \|_F^2} . 
\end{equation} 
For sufficiently small values of $r$, specifically $r < \frac{mn}{m+n}$, this decomposition enables compression of the original matrix. 
Beyond compression, low-rank factorizations are valuable for their ability to extract meaningful features from data, which has established them as essential techniques in data analysis and machine learning. 

To enforce additional properties (e.g., sparsity), we can add a regularization function $g$, thus recovering the optimization problem~\eqref{eq:composite_problem}.

For instance, we consider the regularizers of the form
$$
g(X) = \sum_{i=1}^m\sum_{j=1}^n \phi(X_{i, j})
$$
for a matrix $X\in\R^{m\times n}$, where $\phi$ is a non-smooth function such as the absolute value, MCP, or SCAD. Note that the linear operator $T$ is the identity here.
In this work, we take $\phi(\cdot) = \text{MCP}_{\mu, \lambda}(\cdot)$ with parameters $\mu = 3$ and $\lambda = 1$.
Thus, our concrete composite minimization problem is
\begin{equation}
    \label{eq:pen_general_problem_2factors}
    \min_{W, H}{\| Y - WH \|_F^2} 
    + 
    \sum_{i=1}^m\sum_{j=1}^r
    \text{MCP}_{\mu, \lambda}(W_{i, j})
\end{equation} 
The regularization is applied to $W$; applying it to $H$ instead is equally valid. The goal is to promote sparsity in one of the two factors, improving interpretability without substantially degrading the factorization quality.

We use the benchmark instances of~\cite{wertz2025efficient}, comprising four real matrices (Camera, Spectrometer, Olivetti, Football) and one synthetic low-rank matrix.
We compute the direction with respect to the $\ell_2 \to \ell_2$ operator norm, i.e., the spectral norm used by the Muon optimizer~\cite{muon}.
Choosing this geometry for the descent enforces \textit{dense} updates, meaning the resulting direction has all of its singular values equal to one.
This geometry is particularly well suited for neural networks, where the raw gradients are often low-rank matrices  and concentrated along a few directions~\cite{muon}.
Given an SVD $X = U\Sigma V^T$ of a matrix $X\in\R^{m\times n}$, the LMO with respect to the unit ball $\mathcal D_{\mathcal S_\infty}$ associated with this norm is
$$
\mathrm{lmo}_{\mathcal D_{\mathcal S_\infty}}(X) = -UV^T
$$
This LMO can be computed without explicitly performing an SVD by leveraging Newton--Schulz iterations that approximate $UV^T$. In the experiments reported here, we normalize the objective matrices $Y$ with Min--Max scaling before the start of the experiments, which stabilizes the approximate LMO without changing the ordering of its entries. 

In this study, we use a 6-iterations Newton--Schulz method (see the code for more details). One can reduce the number of iterations in order to speed up the computation, but this may have an important impact on the global performance of the algorithm as shown in Figure~\ref{fig:impact_of_iterations}.

We evaluate two variants of MELMO corresponding to the schedule pairs $(p, q) = (7/12, 1/3)$ and $(p, q) = (2/3, 1/4)$, with the remaining hyperparameters given in Table~\ref{tab:hyperparameters_separate}. These are compared against the variable smoothing algorithm of~\cite{bohm2021variable} and a sub-gradient method with fixed step size. The comparison is designed to assess whether geometry-aware smoothing is competitive with established Euclidean baselines under a uniform hyperparameter policy, i.e., the same settings for each method across all datasets.

Full results are presented in Table~\ref{tab:performance}. The balanced schedule $(p,q)=(7/12,1/3)$ achieves the lowest penalized objective on Camera (all ranks), Football (all ranks), and the synthetic dataset (ranks~10 and~20), and is competitive on Spectrometer at ranks~10 and~20. However, on Spectrometer at rank~30 the sub-gradient method obtains a lower objective ($87.7$ vs.\ $89.7$), and on Olivetti at ranks~20 and~30 both MELMO variants are outperformed by the sub-gradient baseline. The more aggressive schedule $(p,q)=(2/3,1/4)$ exhibits a pronounced failure mode on Olivetti, where the penalized objective is an order of magnitude larger than the competing methods; this suggests that the faster smoothing decay overwhelms the optimization on this dataset. Overall, the $(7/12,1/3)$ schedule is the more robust of the two. Appendix Table~\ref{tab:reconstruction loss} reports the reconstruction loss $\|Y-WH\|_F^2$ separately: these values largely track the penalized-objective ranking, confirming that the objective improvements translate into comparable or better factorization quality, with Olivetti again exposing the main instability. The convergence trajectories on Camera (Figure~\ref{fig:camera_Fs}) show that the $(7/12,1/3)$ schedule reaches lower objective values earlier than the baselines on this particular dataset.

\begin{figure}[ht!]
    \centering
    \begin{minipage}{0.49\textwidth}
        \centering
        \includegraphics[width=\linewidth]{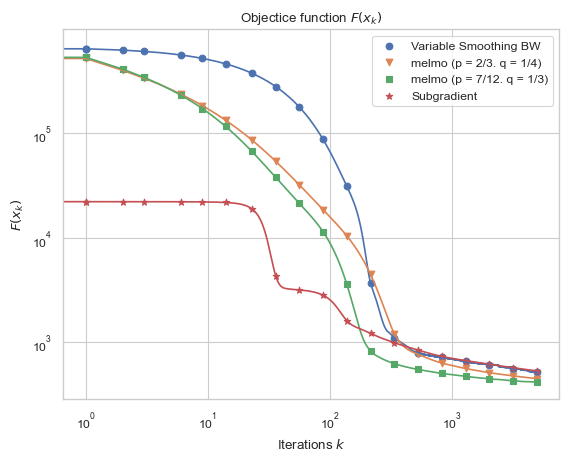}\\
        ($r = 10$)
    \end{minipage}
    \hfill
    \begin{minipage}{0.49\textwidth}
        \centering
        \includegraphics[width=\linewidth]{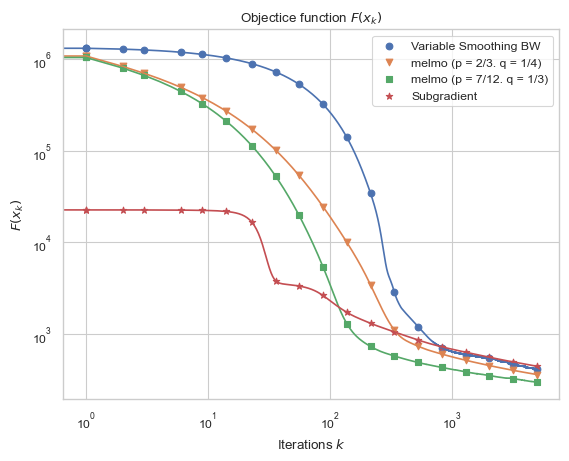}\\
        ($r = 20$)
    \end{minipage}
    \hfill
    \begin{minipage}{0.49\textwidth}
        \centering
        \includegraphics[width=\linewidth]{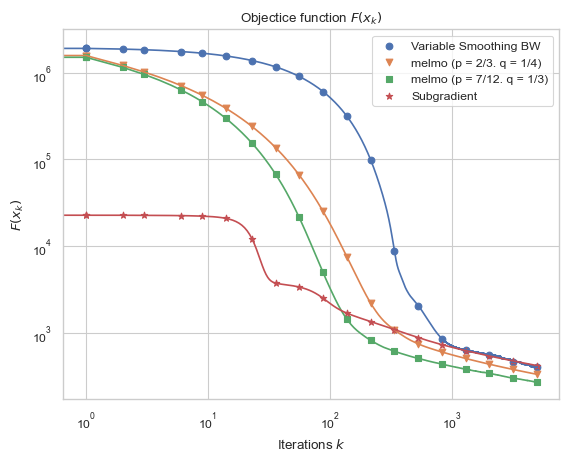}\\
        ($r = 30$)
    \end{minipage}
    \caption{Penalized objective $F$ vs.\ iteration (log scale) on the Camera dataset for ranks $r\in\{10,20,30\}$.}
    \label{fig:camera_Fs}
\end{figure}

\subsection{Total variation denoising}

Given an image $y \in \mathbb{R}^{m\times n}$, we want to recover a denoised version of $y$ using the anisotropic model
\begin{equation*}
    \min\limits_{W\in\mathbb{R}^{m\times n}} \frac{1}{2}\|W-Y\|_F^2
    +
    \sum_{i,j}\text{MCP}_{\mu,\lambda}\!\left((\nabla_x W)_{i,j}\right)
    +
    \sum_{i,j}\text{MCP}_{\mu,\lambda}\!\left((\nabla_y W)_{i,j}\right),
\end{equation*}
where $\nabla W = (\nabla_x W,\nabla_y W)$ denotes the discrete forward-difference gradient along rows and columns.

The adjoint operator $\nabla^*$ acts on a pair of matrices $(P_x, P_y) \in \mathbb{R}^{m \times n} \times \mathbb{R}^{m \times n}$ and is given by the negative discrete divergence. With zero boundary conditions (i.e., values outside the domain are treated as zero), it is expressed as:

$$
(\nabla^* P)_{i,j} = 
- \Big[ (P_x)_{i,j} - (P_x)_{i,j-1} \Big] 
- \Big[ (P_y)_{i,j} - (P_y)_{i-1,j} \Big],
$$
for $i = 1,\dots,m,\; j = 1,\dots,n$, where we adopt the convention:
$$
(P_x)_{i,0} = 0 \quad \text{and} \quad (P_y)_{0,j} = 0.
$$

Equivalently, in piecewise form:

$$
(\nabla^* P)_{i,j} =
\begin{cases}
- (P_x)_{i,1} - (P_y)_{1,j}, & i=1,\; j=1, \\
- \big[(P_x)_{i,j} - (P_x)_{i,j-1}\big] - (P_y)_{1,j}, & i=1,\; j>1, \\
- (P_x)_{i,1} - \big[(P_y)_{i,j} - (P_y)_{i-1,j}\big], & i>1,\; j=1, \\
- \big[(P_x)_{i,j} - (P_x)_{i,j-1}\big] - \big[(P_y)_{i,j} - (P_y)_{i-1,j}\big], & i>1,\; j>1.
\end{cases}
$$

In compact notation, this is often written as:

$$
\nabla^* = -\operatorname{div}.
$$

The smoothed gradient at each iteration is
\begin{equation*}
    \nabla F_j(W) = x-y + \frac{\nabla^\ast(\nabla W - \mathrm{prox}_{\beta_j \text{MCP}}(\nabla W))}{\beta_j}
\end{equation*}
where the proximal operator acts elementwise on the horizontal and vertical components of $\nabla W$.

This denoising experiment should be read as an empirical extension of the method. The non-convex convergence theory requires surjectivity of $T$ only when translating the smoothed-gradient bound into the stronger composite stationarity proxy; the discrete image gradient $\nabla\colon\R^{m\times n}\to\R^{m\times n}\times\R^{m\times n}$ does not satisfy this assumption. We therefore use this section to assess the practical behavior of geometry-aware smoothing rather than to claim a direct instantiation of the surjective-$T$ theorem.

We apply MELMO to the Camera image with the same MCP parameters ($\mu=3$, $\lambda=1$) and the schedule $(p,q)=(7/12,1/3)$. We compare against variable smoothing and the sub-gradient method, and we also compare three LMO geometries for MELMO: spectral, $\ell_2$, and nuclear (see~\cite{scion} for their expressions). 

\begin{figure}[ht!]
    \centering
    \includegraphics[width=.75\linewidth]{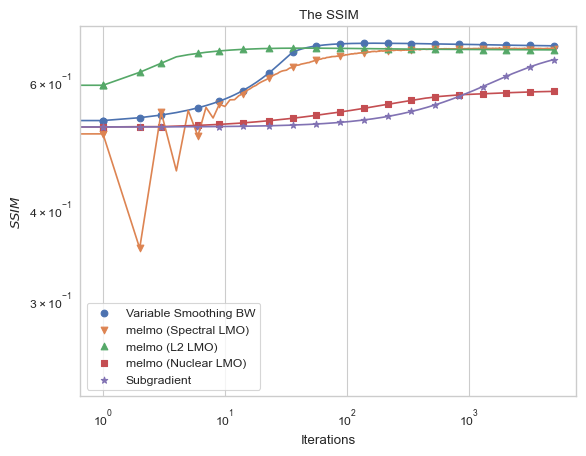}
    \caption{SSIM vs.\ iteration (log scale) for total variation denoising on the Camera image, comparing three MELMO geometries against variable smoothing and sub-gradient baselines.}
    \label{fig:camera_denoising}
\end{figure}

Figure~\ref{fig:camera_denoising} shows the SSIM~\cite{wang2004image}convergence curves. All three MELMO geometries reach higher SSIM values than the sub-gradient baseline on this test image. Among the LMOs, the $\ell_2$ oracle performs best, the spectral oracle is comparable to variable smoothing, and the nuclear oracle is the weakest of the three geometries. Because only a single image and noise level are used, these observations should be taken as illustrative rather than as a general ranking of geometries for denoising.

We also compare the regular MELMO and the epoch-wise MELMO under the same denoising setting with the spectral LMO and $(p, q) = (2/3,1/3)$. Figure~\ref{fig:epoch} shows both methods have comparable performance.
Moreover, as shown in Figure~\ref{fig:proximalGap_epochs}, the proximal gap goes down faster with the epoch-wise MELMO than the regular variant, so, the first has a better fidelity towards the original problem than the latter.
We also see that the dual norm of the gradients $\|\nabla F_k\|_*$ oscillates for both methods but it goes down at a comparable rate.

\begin{figure}[ht!]
    \centering
    \includegraphics[width=0.49\linewidth]{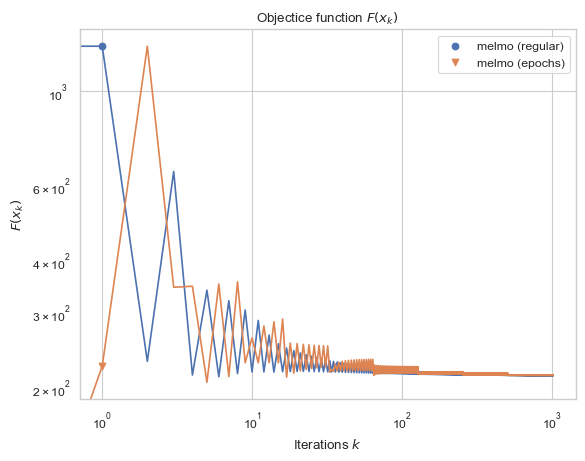}
    \includegraphics[width=0.49\linewidth]{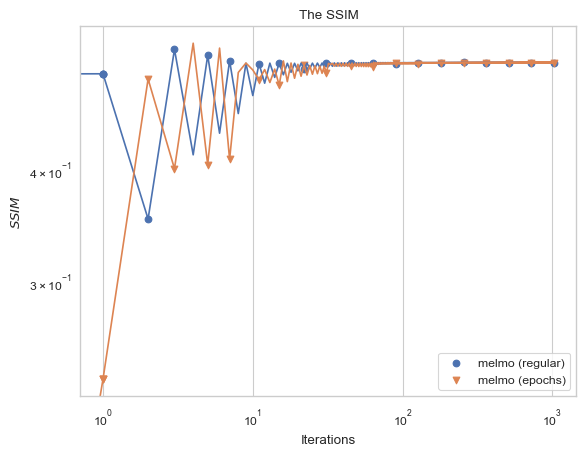}
    \caption{Objective value and SSIM comparison between regular MELMO and epoch-wise MELMO with the spectral LMO on the Camera denoising problem.}
    \label{fig:epoch}
\end{figure}

\begin{figure}[ht!]
    \centering
    \includegraphics[width=0.49\linewidth]{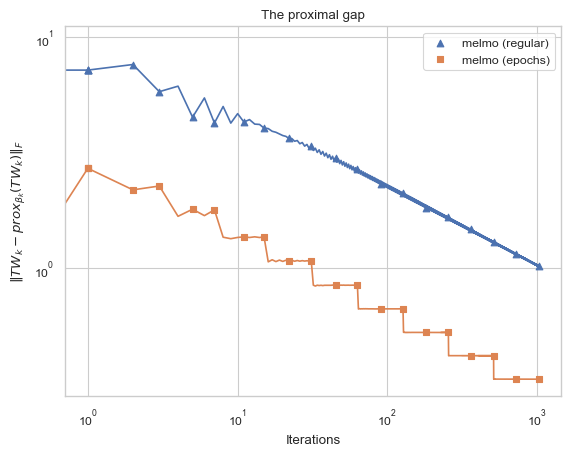}
    \includegraphics[width=0.49\linewidth]{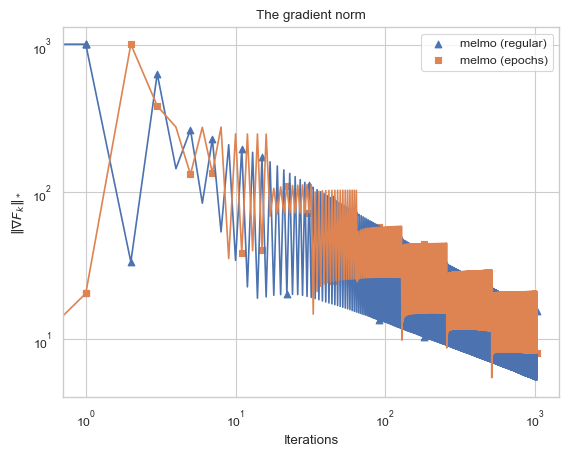}
    \caption{The proximal gap or fidelity term ($\|TW_k - \prox{\beta_k}{TW_k}\|$), and the dual norm of the gradients $\|\nabla F_k\|_*$ comparison between regular MELMO and epoch-wise MELMO with the spectral LMO on the Camera denoising problem.}
    \label{fig:proximalGap_epochs}
\end{figure}

\subsection{Masked matrix recovery}
\label{subsec:masked_matrix_recovery}

The sparse low-rank factorization experiment in Section~\ref{subsec:low_rank} is computationally natural, but its smooth part
$$
(W,H)\mapsto \|Y-WH\|_F^2
$$
does not have a globally Lipschitz gradient. To evaluate MELMO on a matrix problem that satisfies the theoretical assumptions globally while retaining the matrix geometry of interest, we introduce a masked matrix recovery experiment.

Let $Y\in\mathbb{R}^{m\times n}$ be a reference matrix, and let $\Omega\subseteq \{1,\dots,m\}\times\{1,\dots,n\}$ denote the set of observed entries. Define the binary mask $M\in\{0,1\}^{m\times n}$ by
$$
M_{ij}
=
\begin{cases}
1, & (i,j)\in\Omega,\\
0, & (i,j)\notin\Omega,
\end{cases}
$$
and write $P_\Omega(X)=M\odot X$, where $\odot$ denotes entrywise multiplication.
We consider the masked matrix recovery problem
\begin{equation}
\label{eq:masked_matrix_recovery}
\min_{X\in\mathbb{R}^{m\times n}}
\frac12\|P_\Omega(X-Y)\|_F^2
+
\alpha
\sum_{i=1}^{m}\sum_{j=1}^{n}
\operatorname{MCP}_{\mu,\lambda}(X_{ij}),
\end{equation}
where $\alpha>0$ is a regularization weight and $\operatorname{MCP}_{\mu,\lambda}$ is the MCP penalty defined in Section~\ref{sec:notnotation_preliminaries}.
The mask $M$ is fixed during optimization and is not a decision variable.

Problem~\eqref{eq:masked_matrix_recovery} is an instance of the composite problem
$(P)$ with $T=I$. Identifying $\mathbb{R}^{m\times n}$ with
$\mathbb{R}^{mn}$, we may write
$$
f(X)=\frac12\|P_\Omega(X-Y)\|_F^2,
\qquad
g(X)=
\alpha
\sum_{i=1}^{m}\sum_{j=1}^{n}
\operatorname{MCP}_{\mu,\lambda}(X_{ij}).
$$  
The smooth part is quadratic, with gradient
$$
\nabla f(X)=P_\Omega(X-Y).
$$
Hence, for any $X,X'\in\mathbb{R}^{m\times n}$,
$$
\|\nabla f(X)-\nabla f(X')\|_F
=
\|P_\Omega(X-X')\|_F
\le
\|X-X'\|_F,
$$
so $f$ is globally $1$-smooth with respect to the Frobenius norm. If MELMO is implemented with another primal norm, such as the spectral norm, the smoothness assumption still holds with a finite norm-equivalence constant.
The regularizer $g$ is separable, Lipschitz-continuous, and
$\mu^{-1}$-weakly convex. Moreover, $T=I$ is surjective and
$\sigma_{\min}(T)=1$. Therefore, the convergence certificates of
theorems of the previous section apply directly to problem~\eqref{eq:masked_matrix_recovery}. In particular, an $\epsilon$-proxy certificate
$$
\|\nabla F_{\beta_j}(X_j)\|_*\le \epsilon,
\qquad
\|X_j-\operatorname{prox}_{\beta_j g}(X_j)\|_F\le \epsilon
$$
implies near-stationarity for the original composite problem without requiring an additional surjectivity argument.

This experiment is a standard matrix completion or missing-data recovery model. The observed-entry term enforces fidelity to the known entries of $Y$, while the MCP regularizer encodes structural bias such as sparsity or, in spectral variants, low-rank structure. It is relevant, for example, to recommender systems, recovering sparse images, image inpainting, sensor-network imputation, and low-rank data recovery.

Unless stated otherwise, we generate $\Omega$ by sampling exactly $\lfloor \omega mn\rfloor$ entries uniformly at random without replacement, where $\omega\in(0,1]$ is the observation ratio. The default choice is $\omega=1/2$. We also report results for $\omega\in\{0.3,0.5,0.7\}$ to assess sensitivity to the amount of missing data. The same mask $M$ is used for all compared algorithms.

For these experiments, we compare multiple variants of MELMO alongside the baselines in Section~\ref{subsec:low_rank}, i.e. the variable smoothing baseline of~\cite{bohm2021variable} and the fixed-step subgradient method.
First, we choose MELMO with
$$
(p,q)=\left(\frac{7}{12},\frac13\right)
$$
corresponding to the balanced regime which showed the best performance in low-rank factorization experiments(see Table~\ref{tab:performance}). This regular variant of MELMO will be studied under two LMOs : the $\ell_2$ LMO and the spectral LMO computed approximately using Newton-Schulz iterations, as in Section~\ref{subsec:low_rank}.
Moreover, we compare these regular MELMO variants with epoch-wise MELMO variant with $(p,q) = (2/3, 1/3)$ that also uses the spectral LMO.
The proximal operator of $g$ is computed entrywise by firm thresholding.
In this work, we set $\alpha=1$.

Let $X_k$ denote the iterate produced by a given method at iteration $k$, and let $P_{\Omega^c}(X)=(1-M)\odot X$ denote the projection onto the unobserved entries. We report the following quantities:
\begin{align}
\text{penalized objective:}\qquad
&F(X_k)
=
\frac12\|P_\Omega(X_k-Y)\|_F^2
+
g(X_k),
\\
\text{observed residual:}\qquad
&\|P_\Omega(X_k-Y)\|_F^2,
\\
\text{relative held-out error:}\qquad
&
\frac{\|P_{\Omega^c}(X_k-Y)\|_F}
{\|P_{\Omega^c}(Y)\|_F},
\\
\text{smoothed certificate:}\qquad
&\|\nabla F_{\beta_k}(X_k)\|_*,
\\
\text{proximal gap:}\qquad
&\|X_k-\operatorname{prox}_{\beta_k g}(X_k)\|_F.
\end{align}
The first four metrics measure statistical recovery quality, while the last two measure compliance with the theoretical stationarity certificates.

\paragraph{Results.}

To maintain readability, we present a representative subset of figures illustrating the core performance trends of all algorithms. The complete set of experimental results, including additional plots in different settings, is deferred to the appendix.

As shown in Table~\ref{tab:performanceM}, MELMO with $\ell_2$ LMO outperforms with a slight margin over all other algorithms. Meanwhile, the other MELMO variants remain competitive.
That said, Figure~\ref{fig:masked_objectives_sample} shows that MELMO with the spectral LMO—both the standard and epoch-wise variants—reduces the objective value faster than all competing methods in early iterations.
Figure~\ref{fig:masked_rLoss_sample} shows that all methods reach comparable levels of the observed residual loss in the long run, while the spectral LMO variants of MELMO achieve faster convergence to these values.
Overall, the subgradient method performs poorly across all these experimental metrics. We tested a range of alternative step sizes during development, but none resolved the underlying convergence issues, and the method's behavior remained consistently weak.

\begin{table*}[ht!]
\centering
\caption{Performance comparison of the algorithms across 2 datasets, each with three masking probabilities. Values represent the final penalized objective $F$.}
\label{tab:performanceM}
{\small
\setlength{\tabcolsep}{1pt}
\begin{tabular}{l l c c c c c}
\toprule
\textbf{Dataset} & $\omega$\quad & \shortstack{\textbf{Sub-}\\\textbf{gradient}} \quad& \shortstack{\textbf{Variable}\\\textbf{smoothing}} \quad& \shortstack{\textbf{MELMO}\\$(\ell_2)$} \quad&
\shortstack{\textbf{MELMO}\\\textbf{Epoch-wise}} \quad& \shortstack{\textbf{MELMO}\\$(spectral)$}\\
\midrule

\multirow{3}{*}{Football} 
& $0.3$ 
& $386.3$ 
& $3.363$ 
& $\textbf{3.262}$
& $3.403$ 
& $3.513$
\\

& $0.5$ 
& $511.7$ 
& $5.457$ 
& $\textbf{5.394}$
& $5.608$ 
& $5.789$
\\

& $0.7$ 
& $645.2$ 
& $7.549$ 
& $\textbf{7.442}$
& $7.631$ 
& $7.821$
\\
\midrule

\multirow{3}{*}{Misérables} 
& $0.3$ 
& $151.0$ 
& $1.391$ 
& $\textbf{1.355}$
& $1.390$ 
& $1.458$
\\

& $0.5$ 
& $221.8$ 
& $2.356$ 
& $\textbf{2.250}$
& $2.335$ 
& $2.418$
\\

& $0.7$ 
& $279.1$ 
& $3.233$ 
& $\textbf{3.209}$
& $3.283$ 
& $3.411$
\\
\bottomrule
\end{tabular}
}
\end{table*}

\begin{figure}[ht!]
    \centering
    \begin{minipage}{0.49\textwidth}
        \centering
        \includegraphics[width=\linewidth]{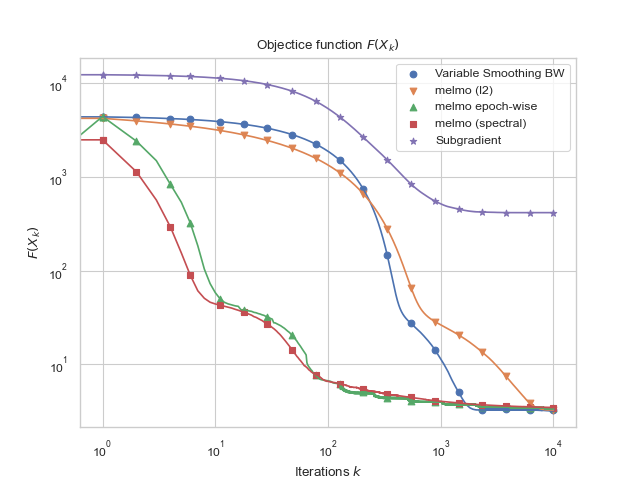}\\
        (Football, $\omega=0.3$)
    \end{minipage}
    \hfill
    \begin{minipage}{0.49\textwidth}
        \centering
        \includegraphics[width=\linewidth]{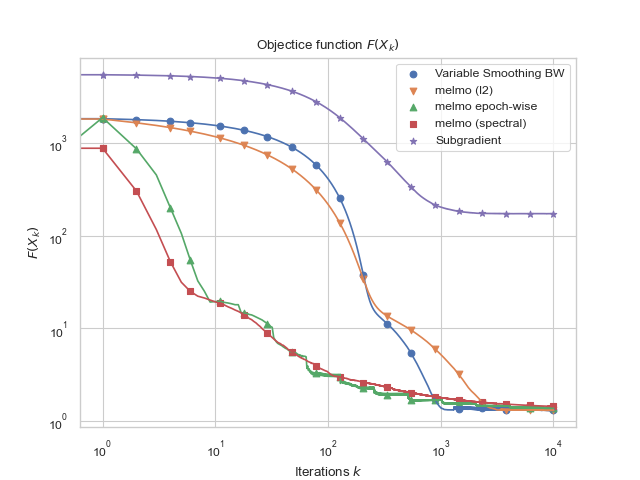}\\
        (Misérables, $\omega=0.3$)
    \end{minipage}
    \caption{Penalized objective $F$ vs.\ iteration (log scale) for masked matrix recovery on Football and Misérables datasets.}
    \label{fig:masked_objectives_sample}
\end{figure}

\begin{figure}[ht!]
    \centering
    \begin{minipage}{0.49\textwidth}
        \centering
        \includegraphics[width=\linewidth]{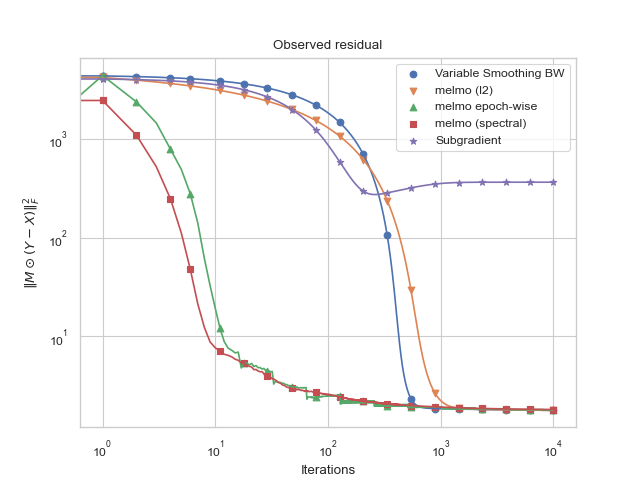}\\
        (Football, $\omega=0.3$)
    \end{minipage}
    \hfill
    \begin{minipage}{0.49\textwidth}
        \centering
        \includegraphics[width=\linewidth]{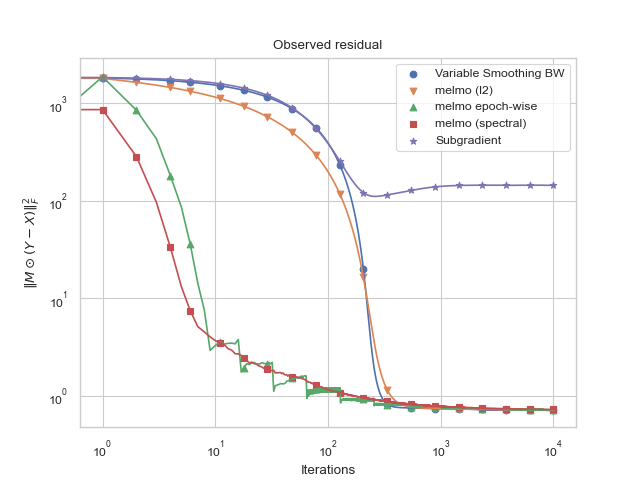}\\
        (Misérables, $\omega=0.3$)
    \end{minipage}

    \caption{Observed residual loss vs.\ iteration (log scale) for masked matrix recovery on Football and Misérables datasets.}
    \label{fig:masked_rLoss_sample}
\end{figure}

Turning to the relative held-out error, Figure~\ref{fig:masked_heldOutLoss_sample} shows that all MELMO variants deliver competitive performance on this metric, with the $\ell_2$-LMO variant closely matching the variable smoothing baseline, which usually achieves the lowest held-out error across all compared methods.
On this held-out error metric, the subgradient method delivers competitive performance, defying its weak behavior on the other evaluation criteria.

\begin{figure}[ht!]
    \centering
    \begin{minipage}{0.49\textwidth}
        \centering
        \includegraphics[width=\linewidth]{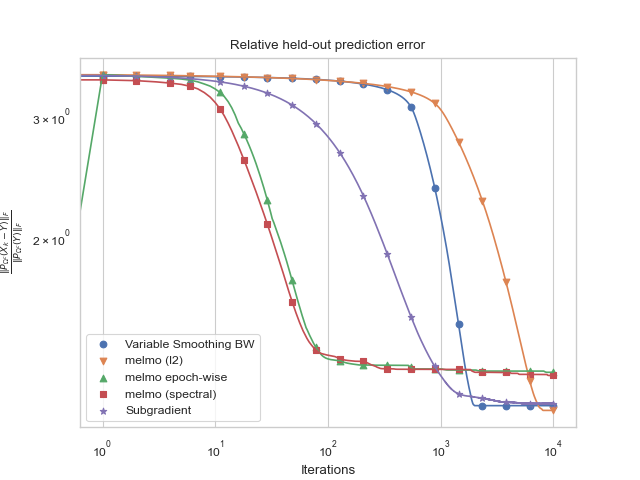}\\
        (Football, $\omega=0.5$)
    \end{minipage}
    \hfill
    \begin{minipage}{0.49\textwidth}
        \centering
        \includegraphics[width=\linewidth]{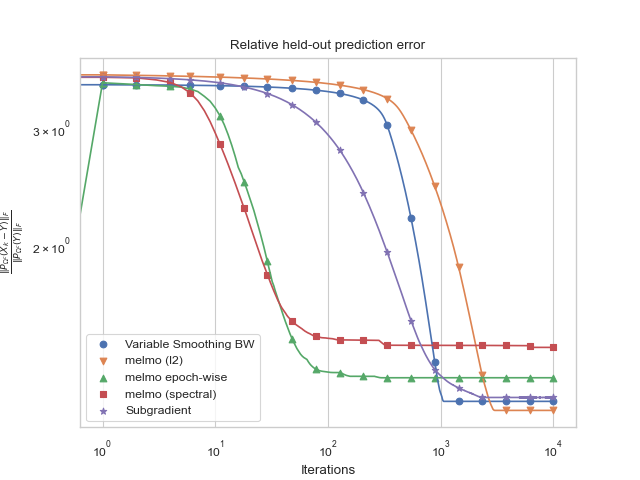}\\
        (Misérables, $\omega=0.5$)
    \end{minipage}
    \caption{Relative held-out error vs.\ iteration (log scale) for masked matrix recovery on Football and Misérables datasets.}
    \label{fig:masked_heldOutLoss_sample}
\end{figure}

Finally, Figures~\ref{fig:masked_proximalGap_sample} and~\ref{fig:masked_dualNorms_sample} illustrate the evolution of the proximal gap and the dual norm of the gradients, respectively. The observed trajectories align with our theoretical analysis and the step size/regularization schedules used by each algorithm.
In particular, caution is required when interpreting the dual norm of the gradients, as these norms are computed under different geometries for different algorithms. Meaningful comparisons are only possible between methods that share the same underlying geometry: for example, the regular MELMO with spectral LMO can be directly compared to its epoch-wise spectral-LMO counterpart. Similarly, the $\ell_2$-LMO variant of MELMO uses a geometry consistent with that of the variable smoothing baseline, making them appropriate peers for comparison.

\begin{figure}[ht!]
    \centering
    \begin{minipage}{0.49\textwidth}
        \centering
        \includegraphics[width=\linewidth]{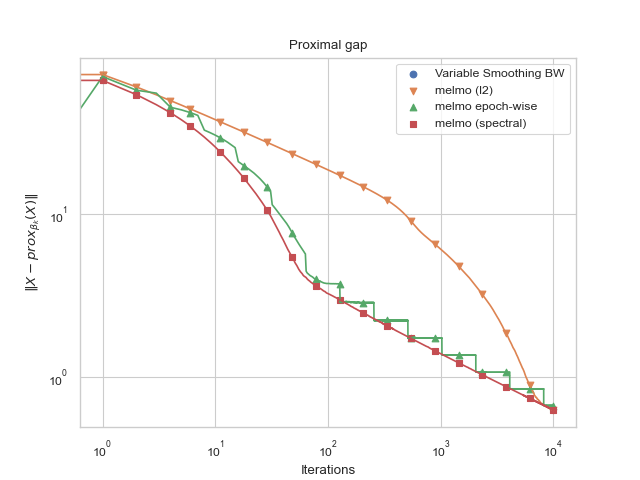}\\
        (Football, $\omega=0.3$)
    \end{minipage}
    \hfill
    \begin{minipage}{0.49\textwidth}
        \centering
        \includegraphics[width=\linewidth]{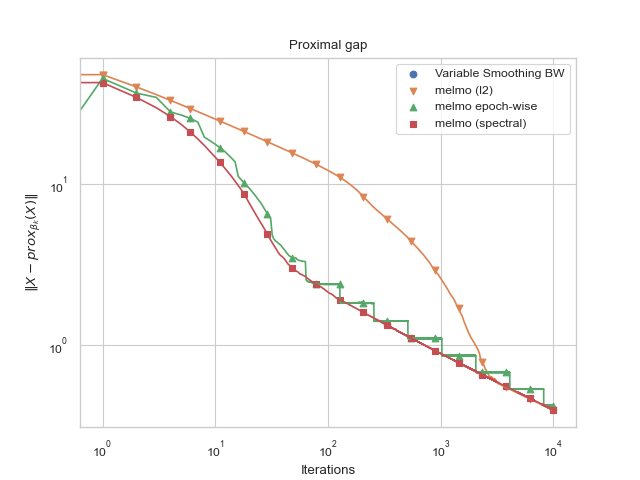}\\
        (Misérables, $\omega=0.3$)
    \end{minipage}
    \caption{Proximal gap vs.\ iteration (log scale) for masked matrix recovery on Football and Misérables datasets.}
    \label{fig:masked_proximalGap_sample}
\end{figure}

\begin{figure}[ht!]
    \centering
    \begin{minipage}{0.49\textwidth}
        \centering
        \includegraphics[width=\linewidth]{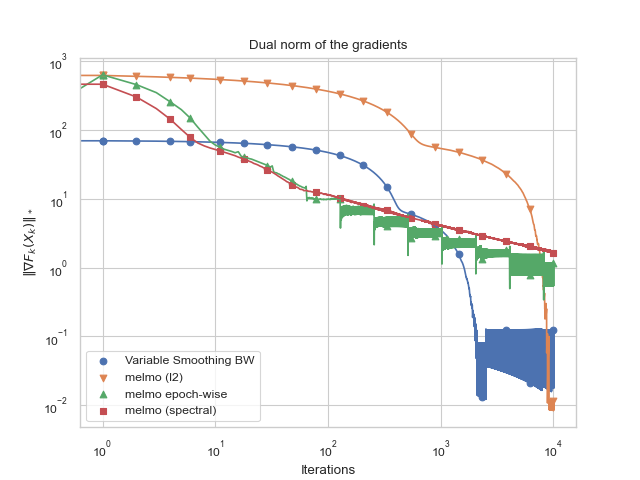}\\
        (Football, $\omega=0.3$)
    \end{minipage}
    \hfill
    \begin{minipage}{0.49\textwidth}
        \centering
        \includegraphics[width=\linewidth]{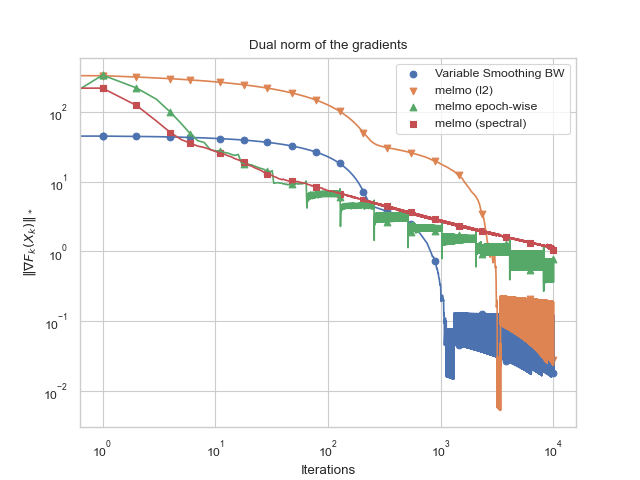}\\
        (Misérables, $\omega=0.3$)
    \end{minipage}
    \caption{Dual norm of the gradients vs.\ iteration (log scale) for masked matrix recovery on Football and Misérables datasets.}
    \label{fig:masked_dualNorms_sample}
\end{figure}

\section{Discussion}
In this work, we presented MELMO, a geometry-aware algorithm for composite optimization problems involving non-smooth terms. By leveraging the Moreau envelope to smooth non-smooth components and incorporating problem-specific geometries through LMOs, our approach unifies and extends several existing optimization techniques.

We established theoretical convergence guarantees for our method under different assumptions on the objective function. For the general case with a Lipschitz-smooth function $f$ and $\rho$-weakly convex function $g$, our analysis yields a family of rates indexed by the schedules $(\gamma_k,\beta_k)$. A balanced schedule gives $\mathcal O(k^{-1/4})$ rates for both the smoothed-gradient norm and the composite stationarity proxy, whereas a more aggressive schedule improves the smoothed-gradient rate to $\mathcal O(k^{-1/3})$ while retaining an $\mathcal O(k^{-1/4})$ bound on the composite proxy. Our analysis also introduced an epoch-wise variant that returns computable $\epsilon$-proxy certificates. With the power schedules inherited from the main theorem, this certification requires $\mathcal O(\epsilon^{-4})$ iterations, while an epoch-constant specialization with per-epoch restarts improves the bound to $\mathcal O(\epsilon^{-3})$. Under surjective $T$, these computable certificates translate into control of the stronger composite stationarity proxy. 

Our algorithm recovers classical steepest descent as a special case while allowing for non-Euclidean geometries that better capture problem structure. Numerical experiments on sparse low-rank matrix factorization show that, under a uniform hyperparameter policy, the balanced schedule $(7/12,1/3)$ is competitive with or improves upon the Euclidean baselines on four of five datasets, while the more aggressive $(2/3,1/4)$ degrade substantially on certain problem instances (its final objective on Olivetti is an order of magnitude worse than the baselines, cf. Table~\ref{tab:performance}). 

The denoising experiment should be read as an empirical extension: the discrete image gradient is not surjective, so the strongest composite-proxy guarantee does not apply, but geometry-aware smoothing nevertheless produces reasonable results in practice. Moreover, unlike the variable smoothing method, our approach does not require knowledge of the gradient Lipschitz constant $L_{\nabla f}$ to set its step size, enhancing its practical applicability.

The masked matrix recovery experiment provides a theory-compliant matrix-valued testbed that validates MELMO's ability to handle non-Euclidean geometries while satisfying all theoretical requirements for convergence. Unlike the sparse low-rank factorization setting, which uses a bilinear loss without a globally Lipschitz gradient, this experiment maintains the Frobenius-norm smoothness guarantee that underpins our convergence analysis. The linear operator $T=I$ is surjective, so the full strength of our composite stationarity certificates applies, and the entrywise proximal operator of the MCP penalty can be computed efficiently via firm thresholding.

Our empirical results on the Football and Misérables datasets confirm that MELMO remains competitive and sometimes better across different observation ratios $\omega\in\{0.3,0.5,0.7\}$. 
The convergence trajectories in Figure~\ref{fig:masked_objectives_sample} show that the spectral LMO variants of MELMO, both regular and epoch-wise, reduce the objective value faster than all baselines in the early iterations, while the $\ell_2$ LMO variant maintains its advantage over the long run to reach the lowest final objective (see Table~\ref{tab:performanceM}).
Note that MELMO algorithms, unlike the variable smoothing baseline, do not require knowledge of the gradient Lipschitz constant $L_{\nabla f}$ to set its step size, enhancing its practical applicability.

This experiment demonstrates that MELMO's geometry-aware smoothing framework works effectively outside of the original low-rank factorization setting, and that it can successfully handle problems that meet all the theoretical requirements of our convergence analysis.

Extending our framework to stochastic settings would broaden its applicability to large-scale machine learning problems.

\newpage

\bibliography{mybib} 

\newpage
\appendix
\onecolumn
\section{Proofs}
\subsection{Preliminaries}

The following is a simple lemma that we include for completeness.
\begin{lemma}
\label{lem:composition_smooth}
Let $g: \mathbb{R}^m \to \mathbb{R} \cup \{+\infty\}$ be a proper, lower semi-continuous, $\rho$-weakly convex function, and let $T: \mathbb{R}^n \to \mathbb{R}^m$ be a linear operator with Euclidean operator norm $\|T\|_{\mathrm{op}}$. Fix a norm $\|\cdot\|$ on $\mathbb{R}^n$ and let $\|\cdot\|_*$ be its dual. Choose constants $c_1,c_2>0$ such that
$$
\|v\|_* \le c_1\|v\|_2
\qquad\text{and}\qquad
\|u\|_2 \le c_2\|u\|
$$
for all $u,v\in\mathbb{R}^n$. Then, for any $\beta>0$ satisfying the conditions of Lemma~\ref{lem:moreau_grad}, $h = g^\beta \circ T$ is $c_1c_2\|T\|_{\mathrm{op}}^2/\beta$-smooth with respect to $\|\cdot\|$:
$$
\|\nabla h(x) - \nabla h(y)\|_* \leq c_1c_2\frac{\|T\|_{\mathrm{op}}^2}{\beta} \|x - y\|, \quad \forall x, y \in \mathbb{R}^n.
$$
\end{lemma}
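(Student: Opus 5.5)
The plan is a direct chain-rule computation followed by a comparison of norms. By Lemma~\ref{lem:moreau_grad}, for admissible $\beta$ the envelope $g^\beta$ is continuously differentiable, and $\nabla g^\beta$ is $1/\beta$-Lipschitz with respect to the Euclidean norm on $\mathbb{R}^m$. Since $T$ is linear, the chain rule gives that $h=g^\beta\circ T$ is differentiable with
$$
\nabla h(x)=T^*\nabla g^\beta(Tx),
$$
where $T^*$ is the adjoint with respect to the standard inner product. The inner product is the same one used to define $\|\cdot\|_*$, so this gradient is the one whose dual norm we must control.

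Next, fix $x,y\in\mathbb{R}^n$ and bound the difference of gradients in the Euclidean norm first. Using $\|T^*\|_{\mathrm{op}}=\|T\|_{\mathrm{op}}$ and the Lipschitz property of $\nabla g^\beta$, I would write
$$
\|\nabla h(x)-\nabla h(y)\|_2\le \|T\|_{\mathrm{op}}\,\|\nabla g^\beta(Tx)-\nabla g^\beta(Ty)\|_2\le \frac{\|T\|_{\mathrm{op}}}{\beta}\|T(x-y)\|_2\le \frac{\|T\|_{\mathrm{op}}^2}{\beta}\|x-y\|_2 .
$$

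Finally, I would transfer this bound to the chosen geometry with the two norm-equivalence constants. On the left I apply $\|v\|_*\le c_1\|v\|_2$ with $v=\nabla h(x)-\nabla h(y)$. On the right I apply $\|x-y\|_2\le c_2\|x-y\|$. Combining these gives the claimed constant $c_1c_2\|T\|_{\mathrm{op}}^2/\beta$.

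There is no real obstacle here. The only points needing care are two bookkeeping issues. First, the prox and the Lipschitz constant of $\nabla g^\beta$ live in the Euclidean geometry of $\mathbb{R}^m$, independently of the chosen norm on $\mathbb{R}^n$. Second, the norm-equivalence constants must be applied in the correct directions: an upper bound on $\|\cdot\|_*$ by $\|\cdot\|_2$ for the gradient side, and an upper bound on $\|\cdot\|_2$ by $\|\cdot\|$ for the displacement side. Both constants exist because all norms on $\mathbb{R}^n$ are equivalent.
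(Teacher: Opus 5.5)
Your proposal is correct and follows essentially the same argument as the paper: the chain rule $\nabla h(x)=T^*\nabla g^\beta(Tx)$, the Euclidean bound via $\|T^*\|_{\mathrm{op}}=\|T\|_{\mathrm{op}}$ and the $1/\beta$-Lipschitz gradient from Lemma~\ref{lem:moreau_grad}, and then the norm-equivalence constants $c_1$ and $c_2$ applied in the directions you describe. The only difference is cosmetic: the paper applies $c_1$ at the first step of its chain of inequalities, whereas you collect the Euclidean estimate first and convert at the end.
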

\begin{proof}
From Lemma~\ref{lem:moreau_grad}, $\nabla g^\beta$ is $1/\beta$-Lipschitz in the Euclidean norm:
\begin{equation}
\|\nabla g^\beta(u) - \nabla g^\beta(v)\|_2 \leq \frac{1}{\beta} \|u - v\|_2, \quad \forall u, v \in \mathbb{R}^m.
\label{eq:g_beta_smooth}
\end{equation}
By the chain rule,
\begin{equation}
\nabla h(x) = T^* \nabla g^\beta(Tx),
\label{eq:h_grad}
\end{equation}
where $T^*$ is the Euclidean adjoint of $T$. For any $x,y\in\mathbb{R}^n$,
\begin{align*}
\|\nabla h(x) - \nabla h(y)\|_* 
&\leq c_1\|\nabla h(x) - \nabla h(y)\|_2\\
&= c_1\|T^* \big( \nabla g^\beta(Tx) - \nabla g^\beta(Ty) \big) \|_2\\
&\leq c_1\|T^*\|_{\mathrm{op}} \|\nabla g^\beta(Tx) - \nabla g^\beta(Ty)\|_2\\
&\leq c_1\|T\|_{\mathrm{op}} \frac{1}{\beta} \|Tx - Ty\|_2\\
&\leq  c_1\frac{\|T\|_{\mathrm{op}}^2}{\beta} \|x - y\|_2\\
&\leq  c_1c_2\frac{\|T\|_{\mathrm{op}}^2}{\beta} \|x - y\|.
\end{align*}

\end{proof}

\begin{lemma}[Bounds on the inexact oracle]\label{lemma:inexact oracle}
Let $\varphi\in\R^n$, choose $d\in \mathrm{lmo}(\varphi)$, and let $\widetilde d$ satisfy
$$
\|\widetilde d-d\|\le \delta<1.
$$

Then the following assertions hold:
\begin{enumerate}
    \item $
\|\widetilde{d}\|^2
\leq (\delta+1)^2
$
    \item $\langle \varphi,\ \widetilde{d}\rangle
\leq
\|\varphi\|_*
(\delta-1)$
    \item $
\langle \varphi,\ \widetilde{d}\rangle 
\leq
(1-\delta)\langle \varphi,\ d\rangle 
$
\end{enumerate}
\end{lemma}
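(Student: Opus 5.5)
All three assertions follow from two facts: the defining property of the exact oracle, $\langle \varphi, d\rangle = -\|\varphi\|_*$ with $\|d\|\le 1$ (recalled after the definition of the LMO), and the generalized Cauchy--Schwarz inequality $|\langle \varphi, w\rangle| \le \|\varphi\|_*\,\|w\|$. The latter comes directly from the definition $\|\varphi\|_* = \sup_{\|u\|\le 1}\langle u,\varphi\rangle$, applied to $\pm w/\|w\|$ for $w\neq 0$. I will prove the items in the order stated, since the third is a rewriting of the second.

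For the first assertion, I use the triangle inequality in $\|\cdot\|$: $\|\widetilde d\| \le \|d\| + \|\widetilde d - d\| \le 1 + \delta$. Both sides are nonnegative, so squaring gives $\|\widetilde d\|^2 \le (1+\delta)^2$.

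For the second assertion, I split $\widetilde d = d + (\widetilde d - d)$ and write
\[
\langle \varphi, \widetilde d\rangle = \langle \varphi, d\rangle + \langle \varphi, \widetilde d - d\rangle \le -\|\varphi\|_* + \|\varphi\|_*\,\|\widetilde d - d\| \le -\|\varphi\|_* + \delta\|\varphi\|_*,
\]
which equals $\|\varphi\|_*(\delta - 1)$. For the third assertion, I substitute $\|\varphi\|_* = -\langle \varphi, d\rangle$ into this bound, which gives $\|\varphi\|_*(\delta-1) = (1-\delta)\langle \varphi, d\rangle$.

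No step is a genuine obstacle. The only point needing care is that the inexactness is measured in the primal norm $\|\cdot\|$. The perturbation term $\langle \varphi, \widetilde d - d\rangle$ must therefore be paired with the dual norm $\|\varphi\|_*$, not with a Euclidean norm, so that no norm-equivalence constants enter. The degenerate case $\varphi = 0$ is consistent, since every quantity in the second and third assertions vanishes.
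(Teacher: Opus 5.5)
Your proposal is correct and follows the paper's proof essentially line for line. Item 1 uses the triangle inequality in $\|\cdot\|$. Item 2 uses the splitting $\widetilde d = d + (\widetilde d - d)$, with the perturbation paired against $\|\varphi\|_*$. Item 3 follows by substituting $\|\varphi\|_* = -\langle \varphi, d\rangle$ into the bound from item 2.
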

\begin{proof}

First, we have
$$
\|\widetilde{d}\|^2 = \|\widetilde{d}+d-d\|^2 
\leq (\|\widetilde{d}-d\| + \|d\|)^2
\leq (\delta+1)^2
$$
which proves the first claim.

Since $d$ is the LMO, we have $\langle \varphi,\ d\rangle = -\|\varphi\|_*$, so
$$
\langle \varphi,\ \widetilde{d}\rangle 
=
\langle \varphi,\ \widetilde{d} +d-d\rangle 
= 
\langle \varphi,\ d\rangle 
+
\langle \varphi,\ \widetilde{d} -d\rangle 
=
-\|\varphi\|_*
+
\langle \varphi,\ \widetilde{d} -d\rangle 
$$
Moreover, we have
$$
\langle \varphi,\ \widetilde{d} -d\rangle 
\leq
|\langle \varphi,\ \widetilde{d} -d\rangle  |
\leq
\|\varphi\|_*
\|\widetilde{d} -d\|
\leq
\|\varphi\|_*
\delta
$$
Finally, by combining the inequalities, we get
$$
\langle \varphi,\ \widetilde{d}\rangle
\leq
\|\varphi\|_*
(\delta-1)
$$
which proves the second claim.

Alternatively, from the equations above, we have
$$
\langle \varphi,\ \widetilde{d}\rangle 
\leq
\langle \varphi,\ d\rangle 
+
\|\varphi\|_*
\delta
=
\langle \varphi,\ d\rangle 
-
\langle \varphi,\ d\rangle 
\delta
=
(1-\delta)\langle \varphi,\ d\rangle 
$$
which proves the final claim.
\end{proof}
\begin{lemma}\label{lemma:energy diff}
    Let $f\in C^{1,1}(\mathbb{R}^n)$ be Lipschitz-smooth with constant $L_{\nabla f}>0$ with respect to $\|\cdot\|$, $g: \mathbb{R}^m \to \mathbb{R} \cup \{+\infty\}$ a proper, closed, $\rho$-weakly convex, and possibly non-smooth function, and let $T\colon\mathbb{R}^n\to\mathbb{R}^m$ be a linear operator. Assume that $\widetilde{\mathrm{lmo}}$ satisfies Definition~\ref{def:inexact-lmo} with parameter $\delta\in[0,1)$. Then, with $F_k = f + g^{\beta_k}\circ T$ for any $\beta_k>0$ satisfying the conditions of Lemma~\ref{lem:moreau_grad}, we have

    \begin{equation}
        \label{eq:2}
        F_k(x_{k+1}) -  F_k(x_k) 
        \leq 
        \gamma_k \langle \varphi_k,\ \widetilde{d}_k\rangle
        + \frac{L_{F_k} \gamma_k^2}{2} \|\widetilde{d}_k\|^2
        \leq 
        (\delta-1)\gamma_k \|\varphi_k\|_*
        + (\delta+1)^2\frac{L_{F_k} \gamma_k^2}{2}
    \end{equation}
    where $\varphi_k = \nabla F_k(x_k)$, $\widetilde{d}_{k} = \widetilde{\mathrm{lmo}}(\varphi_k)$, and $x_{k+1} = x_k + \gamma_k \widetilde{d}_k$.
\end{lemma}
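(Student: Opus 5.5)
The plan is to combine the descent lemma for the smoothed objective $F_k$ with the bounds on the inexact oracle in Lemma~\ref{lemma:inexact oracle}.

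First, I establish that $F_k$ is $L_{F_k}$-smooth with respect to $\|\cdot\|$, where $L_{F_k} := L_{\nabla f} + c_1c_2\|T\|_{\mathrm{op}}^2/\beta_k$.
\begin{itemize}
\item $f$ is $L_{\nabla f}$-smooth with respect to $\|\cdot\|$ by assumption.
\item $g^{\beta_k}\circ T$ is $c_1c_2\|T\|_{\mathrm{op}}^2/\beta_k$-smooth by Lemma~\ref{lem:composition_smooth}; this is valid because $\beta_k$ satisfies the conditions of Lemma~\ref{lem:moreau_grad}.
\end{itemize}
The triangle inequality in $\|\cdot\|_*$ then shows that $\nabla F_k = \nabla f + T^*\nabla g^{\beta_k}(T\cdot)$ is $L_{F_k}$-Lipschitz from $\|\cdot\|$ to $\|\cdot\|_*$.

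Second, I apply the descent-lemma characterization from the definition of Lipschitz-smooth functions. Take $x=x_k$ and $y=x_{k+1}=x_k+\gamma_k\widetilde d_k$. This gives
$$F_k(x_{k+1}) - F_k(x_k) \le \gamma_k\langle \varphi_k,\widetilde d_k\rangle + \frac{L_{F_k}\gamma_k^2}{2}\|\widetilde d_k\|^2,$$
which is the first inequality in \eqref{eq:2}.

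The descent lemma is only stated as "equivalent" in the definition. If needed, I would justify it directly by the integral form
$$F_k(y)-F_k(x)-\langle\nabla F_k(x),y-x\rangle=\int_0^1\langle \nabla F_k(x+t(y-x))-\nabla F_k(x),y-x\rangle\,dt.$$
I then bound the integrand using the dual-norm pairing $|\langle a,b\rangle|\le\|a\|_*\|b\|$ and the Lipschitz bound. This yields $\int_0^1 L_{F_k} t\|y-x\|^2\,dt = \frac{L_{F_k}}{2}\|y-x\|^2$.

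Third, I invoke Lemma~\ref{lemma:inexact oracle} with $\varphi=\varphi_k$. Definition~\ref{def:inexact-lmo} supplies an exact $d\in\mathrm{lmo}(\varphi_k)$ with $\|\widetilde d_k-d\|\le\delta$.
\begin{itemize}
\item Item 2 gives $\langle\varphi_k,\widetilde d_k\rangle\le(\delta-1)\|\varphi_k\|_*$.
\item Item 1 gives $\|\widetilde d_k\|^2\le(\delta+1)^2$.
\end{itemize}
Since $\gamma_k>0$ and $L_{F_k}>0$, I can substitute both bounds directly into the first inequality. This yields the second inequality in \eqref{eq:2}.

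The only delicate point is the smoothness of $F_k$ with respect to a possibly non-Euclidean norm. The Moreau envelope's $1/\beta_k$-Lipschitz gradient is Euclidean, so norm-equivalence constants are needed. Lemma~\ref{lem:composition_smooth} already handles this, so I expect no real obstacle. The remaining work is to record the constant $L_{F_k}$ explicitly, since later proofs track its $\beta_k^{-1}$ dependence.
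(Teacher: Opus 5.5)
Your proposal is correct and follows the paper's proof step for step. You establish $L_{F_k}$-smoothness of $F_k$ with $L_{F_k}=L_{\nabla f}+c_1c_2\|T\|_{\mathrm{op}}^2/\beta_k$ via Lemma~\ref{lem:composition_smooth}, apply the descent lemma between $x_k$ and $x_{k+1}=x_k+\gamma_k\widetilde d_k$, and then use items 1 and 2 of Lemma~\ref{lemma:inexact oracle}. Your integral-form derivation of the descent inequality is an extra the paper omits (it takes the inequality from the definition), and it correctly supplies the one direction that is needed.
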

\begin{proof}
    By assumption on $f$, $g$, and $T$, the function $f$ is $L_{\nabla f}$-smooth with respect to $\|\cdot\|$ and $g^\beta\circ T$ is $c_1c_2\|T\|^2_\text{op}/\beta$-smooth by Lemma~\ref{lem:composition_smooth}, where $c_1,c_2>0$ are the constants from that lemma. Thus, $f + g^\beta\circ T$ is $(L_{\nabla f} + c_1c_2\|T\|^2_\text{op}/\beta)$-smooth with respect to $\|\cdot\|$.

    Let $\varphi_k = \nabla F_k(x_k)$. We have by the descent lemma at the points $x_{k+1}$ and $x_k$, and the smoothness of $ F$
    \begin{equation}
         F_k(x_{k+1}) -  F_k(x_k) \leq \langle \varphi_k,\ x_{k+1} - x_k\rangle
        + \frac{L_{F_k}}{2} \|x_{k+1} - x_k\|^2.
    \end{equation}
    By the definition of $\widetilde{d}_k$ we have
    \begin{equation}
         F_k(x_{k+1}) -  F_k(x_k) \leq 
        \gamma_k \langle \varphi_k,\ \widetilde{d}_k\rangle
        + \frac{L_{F_k} \gamma_k^2}{2} \|\widetilde{d}_k\|^2
    \end{equation}

    Applying Lemma~\ref{lemma:inexact oracle}, we get
    \begin{equation}
         F_k(x_{k+1}) -  F_k(x_k) \leq 
        (\delta - 1)
        \gamma_k \|\varphi_k\|_*
        + (\delta + 1)^2\frac{L_{F_k} \gamma_k^2}{2}
    \end{equation}
\end{proof}

\begin{lemma}\label{lemma:k to k+1}
    Let $f\in C^{1,1}(\mathbb{R}^n)$ be Lipschitz-smooth with constant $L_{\nabla f}>0$, let $g: \mathbb{R}^m \to \mathbb{R} \cup \{+\infty\}$ be a proper, closed, and possibly non-smooth function, and let $T\colon\mathbb{R}^n\to\mathbb{R}^m$ be a linear operator. Let $F_k = f + g^{\beta_k}\circ T$ for any $\beta_k>0$.
    
    Then, if $g$ is $\rho$-weakly convex, we have for any $y\in \R^n$ and $0<\beta_{k+1}\leq\beta_{k}<\rho^{-1}$
    \begin{equation}
        F_{k+1}(y)\leq
        F_k(y)
        + \frac{1}{2} \frac{\beta_{k} - \beta_{k+1}}{\beta_{k+1}} \beta_{k}\|\nabla g^{\beta_{k}}(Ty)\|^2
    \end{equation}

    If $g$ is convex, we have for any $y\in \R^n$ and $\beta_k>0$
    \begin{equation}
        F_{k+1}(y)\leq
        F_k(y)
        +
        \frac{\beta_{k} - \beta_{k+1}}{2} \|\nabla g^{\beta_{k}}(Ty)\|^2
    \end{equation}
\end{lemma}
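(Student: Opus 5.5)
The plan is to reduce everything to a pointwise comparison of Moreau envelopes at the fixed point $u := Ty$. Since $f$ does not depend on the smoothing parameter, $F_{k+1}(y) - F_k(y) = g^{\beta_{k+1}}(u) - g^{\beta_k}(u)$. It therefore suffices to upper bound $g^{\beta_{k+1}}(u)$ using the minimizer attained for $\beta_k$.

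Set $p := \mathrm{prox}_{\beta_k g}(u)$. By Lemma~\ref{lem:moreau_grad}, $u - p = \beta_k \nabla g^{\beta_k}(u)$. I will use a test point in the definition of $g^{\beta_{k+1}}$ rather than the exact minimizer.

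\emph{Convex case.} Use the one-dimensional family $t\mapsto g^{t}(u)$, which is nonincreasing and differentiable in $t$. Its derivative is $-\tfrac12\|\nabla g^{t}(u)\|_2^2$, a standard Moreau-envelope identity that follows from an envelope theorem argument applied to $\min_w\{g(w)+\|u-w\|^2/(2t)\}$. For convex $g$, the map $t\mapsto \|\nabla g^t(u)\|_2$ is nonincreasing in $t$: the prox path $t \mapsto \mathrm{prox}_{tg}(u)$ has $\|u-\mathrm{prox}_{tg}(u)\|/t$ nonincreasing, a classical monotonicity fact. Integrating from $\beta_{k+1}$ to $\beta_k$ then gives
\[
g^{\beta_{k+1}}(u)-g^{\beta_k}(u)=\tfrac12\int_{\beta_{k+1}}^{\beta_k}\|\nabla g^t(u)\|^2\,dt\le \tfrac{\beta_k-\beta_{k+1}}{2}\|\nabla g^{\beta_{k+1}}(u)\|^2 .
\]
This bound is at the wrong index, so instead I will argue directly. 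Choosing $w = u - \tfrac{\beta_{k+1}}{\beta_k}(u-p)$ as the test point, the convexity of $g$ along the segment $[p,u]$ together with the subgradient inequality $g(p)\ge g(w) - \langle \nabla g^{\beta_k}(u), w-p\rangle$ reversed does not hold directly. The cleaner route is the known bound from \cite{bohm2021variable} (Lemma~4.1 there), which is proved precisely this way: one shows $g^{\beta_{k+1}}(u)\le g^{\beta_k}(u)+\tfrac{\beta_k-\beta_{k+1}}{2}\|\nabla g^{\beta_k}(u)\|^2$ via the dual representation $g^\beta(u)=\sup_v\{\langle u,v\rangle - g^*(v)-\tfrac{\beta}{2}\|v\|^2\}$. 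In that representation the maximizer is $v=\nabla g^{\beta}(u)$, so plugging the $\beta_{k+1}$-maximizer $v_{k+1}$ into the $\beta_k$ problem gives $g^{\beta_k}(u)\ge g^{\beta_{k+1}}(u)-\tfrac{\beta_k-\beta_{k+1}}{2}\|v_{k+1}\|^2$. The index issue is resolved by evaluating instead with concavity in $\beta$. Specifically, $\beta\mapsto g^\beta(u)$ is convex for convex $g$, being a supremum of affine functions of $\beta$. Hence the tangent at $\beta_k$ underestimates it:
\[
g^{\beta_{k+1}}(u)\ \ge\ \text{(useless direction)},
\]
and one instead uses that $\beta\mapsto g^\beta(u)$ is convex with derivative $-\tfrac12\|\nabla g^\beta(u)\|^2$ at $\beta_k$. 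Convexity gives $g^{\beta_{k+1}}(u)\ge g^{\beta_k}(u)+\tfrac{\beta_k-\beta_{k+1}}2\|\nabla g^{\beta_k}(u)\|^2$, which is the wrong direction. I therefore expect the correct bound to use the test point $w=p$ directly:
\[
g^{\beta_{k+1}}(u)\le g(p)+\frac{\|u-p\|^2}{2\beta_{k+1}} = g^{\beta_k}(u)+\Big(\frac1{2\beta_{k+1}}-\frac1{2\beta_k}\Big)\beta_k^2\|\nabla g^{\beta_k}(u)\|^2,
\]
which equals exactly $\tfrac12\tfrac{\beta_k-\beta_{k+1}}{\beta_{k+1}}\beta_k\|\nabla g^{\beta_k}(u)\|^2$.

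\emph{Weakly convex case.} This test-point computation is precisely the first claim, and it needs only that $p$ is a minimizer, which Lemma~\ref{lem:moreau_grad} guarantees. For the convex case I would obtain the sharper constant $\tfrac{\beta_k-\beta_{k+1}}2$ via the conjugate representation above. Plugging the $\beta_k$-maximizer $v_k=\nabla g^{\beta_k}(u)$ into the $\beta_{k+1}$ supremum bounds $g^{\beta_{k+1}}$ from below, not above. So the main obstacle is getting the upper bound at index $k$, and I would first try bounding $\|\nabla g^{\beta_{k+1}}(u)\|$ by a constant times $\|\nabla g^{\beta_k}(u)\|$ using the prox-path monotonicity, then integrating.
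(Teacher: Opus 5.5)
\paragraph{Weakly convex claim.} Your argument here is correct. It is the reasoning behind Lemma~4.1 of \cite{bohm2021variable}, which the paper simply cites. Inserting $p=\mathrm{prox}_{\beta_k g}(u)$ into the $\beta_{k+1}$-problem gives $g^{\beta_{k+1}}(u)\le g^{\beta_k}(u)+\bigl(\frac{1}{2\beta_{k+1}}-\frac{1}{2\beta_k}\bigr)\|u-p\|_2^2$. This is exactly the first claim once $u-p=\beta_k\nabla g^{\beta_k}(u)$ is used.

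For $1/(2\rho)<\beta_k<1/\rho$, do not justify that identity by Lemma~\ref{lem:moreau_grad}, which is stated only for $\beta\le 1/(2\rho)$. Use instead the $(\beta_k^{-1}-\rho)$-strong convexity of the prox subproblem. Note also that Lemma~4.1 of \cite{bohm2021variable} is this test-point bound, not a duality bound with constant $\frac{\beta_k-\beta_{k+1}}{2}$ as you attribute to it.

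\paragraph{Convex claim: a genuine gap.} Your proposal ends with a plan (rescale $\|\nabla g^{\beta_{k+1}}\|$ by $\|\nabla g^{\beta_k}\|$ and integrate) rather than a proof. No proof can exist, because the inequality as written, with $\nabla g^{\beta_k}(Ty)$, is false.

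You already derived the obstruction yourself. The map $\beta\mapsto g^\beta(u)$ is convex with derivative $-\frac12\|\nabla g^\beta(u)\|_2^2$, so its tangent at $\beta_k$ gives $g^{\beta_{k+1}}(u)\ge g^{\beta_k}(u)+\frac{\beta_k-\beta_{k+1}}{2}\|\nabla g^{\beta_k}(u)\|_2^2$. The claimed reverse inequality would therefore force $\beta\mapsto g^\beta(u)$ to be affine on $[\beta_{k+1},\beta_k]$.

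Concretely, take $g=|\cdot|$ on $\R$, $T=\mathrm{id}$, $y=1$, $\beta_k=2$, $\beta_{k+1}=1$; $f$ cancels. Then $g^{2}(1)=\frac14$, $\nabla g^{2}(1)=\frac12$ and $g^{1}(1)=\frac12$. But the claimed bound is $\frac14+\frac12\cdot\frac14=\frac38<\frac12$, so it fails.

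In this example the test-point bound holds with equality ($\frac14+\frac14$). So even for convex $g$, no constant smaller than $\frac12\frac{\beta_k-\beta_{k+1}}{\beta_{k+1}}\beta_k$ can multiply $\|\nabla g^{\beta_k}\|_2^2$. The prox-path estimate $\|\nabla g^{\beta_{k+1}}(u)\|_2\le\frac{\beta_k}{\beta_{k+1}}\|\nabla g^{\beta_k}(u)\|_2$ only yields the constant $\frac{\beta_k-\beta_{k+1}}{2}\frac{\beta_k^2}{\beta_{k+1}^2}$, which is even worse.

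What is true is the version with the gradient at the new parameter:
\[
F_{k+1}(y)\le F_k(y)+\frac{\beta_k-\beta_{k+1}}{2}\|\nabla g^{\beta_{k+1}}(Ty)\|_2^2 \quad \text{for all } \beta_k,\beta_{k+1}>0.
\]
Both of your computations already prove it: plugging $v_{k+1}=\nabla g^{\beta_{k+1}}(u)$ into the $\beta_k$-supremum, and the integral bound with $\|\nabla g^t(u)\|_2$ nonincreasing in $t$. This is also the index that the conjugate-duality argument behind the property cited from \cite{yurtsever2019conditional} delivers. You dismissed this as ``the wrong index'', but it is the correct statement.

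Since the paper only ever uses $\|\nabla g^{\beta}(\cdot)\|_2\le L_g$ (for instance in the proof of Corollary~\ref{co:1}), correcting the index costs nothing downstream. You should state and prove the corrected inequality rather than try to reach index $k$.
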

\begin{proof}
    Since $g$ is $\rho$-weakly convex and $0<\beta_{k+1}\leq\beta_{k}<\rho^{-1}$, Lemma 4.1 in~\cite{bohm2021variable} gives
    $$
        g^{\beta_{k+1}}(y)\leq 
        g^{\beta_{k}}(y)
        + \frac{1}{2} \frac{\beta_{k} - \beta_{k+1}}{\beta_{k+1}} \beta_{k}\|\nabla g^{\beta_{k}}(y)\|^2
    $$
    Applying this inequality at $Ty$ and adding $f(y)$ to both sides gives the claimed bound.

    For the second inequality, Property 7.3 of~\cite{yurtsever2019conditional} states that, for $g$ proper, closed, and convex, we have for any $\beta_{k+1}, \beta_{k} > 0$ and $y\in\R^m$,
    $$
        g^{\beta_{k+1}}(y)\leq 
        g^{\beta_{k}}(y)
        + \frac{\beta_{k} - \beta_{k+1}}{2} \|\nabla g^{\beta_{k}}(y)\|^2
    $$
    Applying this inequality at $Ty$ and adding $f(y)$ to both sides gives the claimed bound.
    
\end{proof}

\begin{lemma}\label{lemma:bounds}
    Let $(\phi_k)_{k\in\mathbb{N}}$ be a sequence of values in $\R_+$. And $C, C_1, C_2, C_3, p, q >0$ some constants such that
    \begin{equation}
        \label{eq:lemma equation}
        \sum_{k = 1}^K
        \frac{C}{k^p}
        \phi_k
        \leq 
        C_1
        +
        \sum_{k = 1}^K
        \frac{C_2}{k^{2p}}
        +
        \frac{C_3}{k^{2p-q}} 
    \end{equation}
    Then, we have the following assertions

    \begin{enumerate}
    
    \item If $p > 1/2$ and $q= 2p-1$, we have

    \begin{equation}
        \min_{j\in[K]}\phi_j
        \leq 
        \mathcal O\!\left(
        \log(K) K^{p-1}
        \right).
    \end{equation}

    \item If $p > 1/2$ and $2p > q > 2p-1$, we have

    \begin{equation}
        \min_{j\in[K]}\phi_j
        \leq
        \mathcal O\!\left(K^{\max\{p-1,\,q-p\}}\right).
    \end{equation}

    \item If $p > 1/2$ and $q < 2p-1$, we have
    \begin{equation}
        \min_{j\in[K]}\phi_j
        \leq
        \mathcal O\!\left(K^{p-1}\right).
    \end{equation}
    
    \item If $p= 1/2$ and $q\in(0, 1/2)$, we have
    \begin{equation}
        \min_{j\in[K]}\phi_j
        \leq 
        \mathcal O\!\left(
        \log(K)K^{-1/2}
        +
        K^{q-1/2}
        \right).
    \end{equation}
    \end{enumerate}
    
\end{lemma}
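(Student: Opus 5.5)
The plan is the standard ``weighted average dominates the minimum'' argument. Set $m_K:=\min_{j\in[K]}\phi_j$ and $S_p(K):=\sum_{k=1}^K k^{-p}$. Since $\phi_k\ge m_K$ for every $k\le K$, the left-hand side of \eqref{eq:lemma equation} is at least $C\,m_K\,S_p(K)$. Hence
$$
m_K \le \frac{C_1 + C_2\sum_{k=1}^K k^{-2p} + C_3\sum_{k=1}^K k^{-(2p-q)}}{C\,S_p(K)} .
$$
I read the right-hand side of \eqref{eq:lemma equation} as having both the $C_2$ and $C_3$ terms inside the sum; this is how it arises from telescoping Lemma~\ref{lemma:energy diff}. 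All four assertions then follow from elementary estimates of power sums, obtained by comparing each sum with an integral.

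The estimates needed are as follows. For $0<p<1$ we have $S_p(K)\ge \int_1^{K+1}t^{-p}\,dt\ge c\,K^{1-p}$, and $S_{1/2}(K)\ge c\sqrt K$. For an exponent $s=2p$ or $s=2p-q$:
\begin{itemize}
\item if $s>1$, then $\sum_{k\le K}k^{-s}\le \zeta(s)=\mathcal O(1)$;
\item if $s=1$, the sum is $\le 1+\log K$;
\item if $0<s<1$, the sum is $\le 1+\frac{K^{1-s}}{1-s}=\mathcal O(K^{1-s})$.
\end{itemize}
The exponents $p,q$ are implicitly in $(0,1)$, as in Theorem~\ref{th:1}; I would state this assumption explicitly.

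Now go through the cases.
\begin{enumerate}
\item $p>1/2$, $q=2p-1$. Here $2p>1$, so the $C_2$ sum is bounded. Also $2p-q=1$, so the $C_3$ sum is $\mathcal O(\log K)$. Dividing by $K^{1-p}$ gives $\mathcal O(\log K\,K^{p-1})$.
\item $p>1/2$, $2p-1<q<2p$. Then $s=2p-q\in(0,1)$, so the $C_3$ sum is $\mathcal O(K^{1-2p+q})$. Dividing gives $K^{q-p}$, while the constant terms give $K^{p-1}$. Together this is $\mathcal O(K^{\max\{p-1,q-p\}})$.
\item $p>1/2$, $q<2p-1$. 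Both exponents exceed $1$, so the numerator is $\mathcal O(1)$ and the bound is $K^{p-1}$.
\item $p=1/2$, $q\in(0,1/2)$. Then $2p=1$, so the $C_2$ sum is $\mathcal O(\log K)$. Also $2p-q=1-q\in(1/2,1)$, so the $C_3$ sum is $\mathcal O(K^{q})$. Dividing by $\sqrt K$ gives $\mathcal O(\log K\,K^{-1/2}+K^{q-1/2})$.
\end{enumerate}

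No step is a real obstacle. The only care needed is bookkeeping. First, keep the integral-comparison constants uniform in $K$; they depend on $p,q$ but not on $K$. Second, handle the boundary exponent $s=1$ separately to get the logarithm. Third, in case 2 the term $C_1/S_p(K)$ contributes the $K^{p-1}$ part, so it must be kept.
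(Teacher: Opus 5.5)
Your proposal is correct and follows essentially the same route as the paper: bound the weighted sum below by the minimum times $\sum_{k\le K}k^{-p}\ge K^{1-p}$, then bound the power sums on the right by integral comparison (bounded, logarithmic, or $\mathcal O(K^{1-s})$ according to whether $s>1$, $s=1$, or $s<1$), and read off the four cases. One remark: the extra hypothesis $p,q\in(0,1)$ you propose to add is not needed, because the trivial bound $k^{-p}\ge K^{-p}$ for $k\le K$ (which is what the paper uses) already gives $\sum_{k\le K}k^{-p}\ge K^{1-p}$ for every $p>0$.
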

\begin{proof}
    For the left hand side we have
    \begin{equation}
        \label{eq:kp left side}
        \sum_{k = 1}^K
        \frac{C}{k^p}
        \phi_k
        \geq 
        \sum_{k = 1}^K
        \frac{C}{K^p}
        \min_{j\in[K]}\phi_j
        \geq
        {C}{K^{1-p}}
        \min_{j\in[K]}\phi_j
    \end{equation}

    And we have with the integral test for continuous, positive, decreasing functions, and with $p \neq 1/2$
    \begin{equation*}
        \sum_{k = 1}^K
        k^{-2p}
        \leq 
        1
        +
        \int_1^{K}
        x^{-2p}
        =
        1+\left[\frac{x^{1-2p}}{1-2p}\right]_1^K
        =
        1+\left(\frac{K^{1-2p}-1}{1-2p}\right)
    \end{equation*}

    For $p < 1/2$ we have
    \begin{equation*}
        1+\left(\frac{K^{1-2p}-1}{1-2p}\right)
        \leq
        \frac{K^{1-2p}}{1-2p}
    \end{equation*}
    and for $p > 1/2$ we have 
    \begin{equation*}
        1+\left(\frac{K^{1-2p}-1}{1-2p}\right)
        =
        1+\left(\frac{1-K^{1-2p}}{2p-1}\right)
        \leq
        \frac{2p}{2p-1}
    \end{equation*}
    
    If $p = 1/2$, we have
    \begin{equation}\label{eq:log}
        \sum_{k = 1}^K
        k^{-1}
        \leq 
        1
        +
        \int_1^{K}
        x^{-1}
        =
        1+\left[\log(x)\right]_1^K
        =
        1+\log(K)
    \end{equation}
    
    With the same method and with $0<2p-q$ and $2p-q\neq 1$, we have
    $$
        \sum_{k = 1}^K
        k^{-2p+q}
        \leq 
        1
        +
        \int_1^{K}
        x^{-2p + q}
        =
        1+\left[\frac{x^{1-2p+ q}}{1-2p+ q}\right]_1^K
        =
        \frac{K^{1-2p+ q}-2p+ q}{1-2p+ q}
    $$

    For $q < 2p - 1$ we have
    $$
        =
        \frac{K^{1-2p+ q}-2p+ q}{1-2p+ q}
        \leq
        \frac{2p- q}{2p- q - 1}
    $$

    For $q > 2p - 1$ we have
    $$
        =
        \frac{K^{1-2p+ q}-2p+ q}{1-2p+ q}
        \leq
        \frac{K^{1-2p+ q}}{1-2p+ q}
    $$

    If $2p-q= 1$, we get the Equation~\eqref{eq:log} again.

    Hence, by combining and substituting these bounds in Equation~\eqref{eq:lemma equation}, we get these possible (and permitted) cases

    \begin{enumerate}

    \item If $p > 1/2$ and $q= 2p-1$, we have

    \begin{equation}
        {C}{K^{1-p}}
        \min_{j\in[K]}\phi_j
        \leq 
        C_1
        +
        C_{3}
        +
        C_{2}
        \frac{2p}{2p-1}
        +
        C_3
        \log(K)
    \end{equation}
    Which gives
    \begin{equation}
        \min_{j\in[K]}\phi_j
        \leq 
        \left(
        \frac{C_1 + C_3}{C}
        +
        \frac{C_2 2p}{C(2p-1)}
        \right)
        K^{p-1}
        +
        \frac{C_3}{C}
        \log(K)
        K^{p-1}
    \end{equation}

    \item If $p > 1/2$ and $2p > q > 2p-1$, we have

    \begin{equation}
        {C}{K^{1-p}}
        \min_{j\in[K]}\phi_j
        \leq
        C_1
        +
        C_{2}
        \frac{2p}{2p-1}
        +
        \frac{C_3}{1-2p+q}K^{1-2p+q}
    \end{equation}
    Which gives
    \begin{equation}
        \min_{j\in[K]}\phi_j
        \leq
        \left(
        C_1
        +
        C_{2}
        \frac{2p}{2p-1}
        \right)
        \frac{K^{p-1}}{C}
        +
        \frac{C_3}{C(1-2p+q)}
        K^{q-p}
    \end{equation}

    \item If $p > 1/2$ and $q < 2p-1$, we have

    \begin{equation}
        {C}{K^{1-p}}
        \min_{j\in[K]}\phi_j
        \leq
        C_1
        +
        C_{2}
        \frac{2p}{2p-1}
        +
        C_3
        \frac{2p- q}{2p- q - 1}
    \end{equation}
    Which gives
    \begin{equation}
        \min_{j\in[K]}\phi_j
        \leq
        \left(
        C_1
        +
        C_{2}
        \frac{2p}{2p-1}
        +
        C_3
        \frac{2p- q}{2p- q - 1}
        \right)
        \frac{K^{p-1}}{C}
    \end{equation}
    
    \item If $p= 1/2$ and $q\in(0, 1/2)$, we have

    \begin{equation}
        \label{eq:Okp bounds p log}
        {C}{K^{1/2}}
        \min_{j\in[K]}\phi_j
        \leq 
        C_1
        +
        C_{2}
        +
        C_{2}
        \log(K)
        +
        C_3
        \frac{K^{q}}{q}
    \end{equation}
    Which gives
    \begin{equation}
        \min_{j\in[K]}\phi_j
        \leq 
        \frac{C_1 + C_2}{C}
        K^{-1/2}
        +
        \frac{C_2}{C}
        \log(K)
        K^{-1/2}
        +
        \frac{C_3}{Cq}
        K^{q-1/2}
    \end{equation}
    \end{enumerate}
    
\end{proof}

\subsection{Proof of Theorem~\ref{th:1}}\label{proof:1}
\begin{proof}
Write $\varphi_k=\nabla F_k(x_k)$. By Lemmas~\ref{lemma:energy diff} and~\ref{lemma:k to k+1}, and because $g$ is $L_g$-Lipschitz,
\begin{equation}
    \label{eq:3}
    F_{k+1}(x_{k+1}) -  F_k(x_k) \leq
    \frac{1}{2}(\beta_{k} - \beta_{k+1}) \frac{\beta_{k}}{\beta_{k+1}} L_g^2
    -
    (1-\delta)
    \gamma_k \|\varphi_k\|_*
    + (1+\delta)^2\frac{L_{F_k} \gamma_k^2}{2}.
\end{equation}
For $\beta_k=\beta k^{-q}$, the ratio $\beta_k/\beta_{k+1}=((k+1)/k)^q$ is bounded by $2^q$. Summing~\eqref{eq:3} from $k=1$ to $K$ and using $\sum_{k=1}^K(\beta_k-\beta_{k+1})\le \beta_1$ gives
\begin{equation}
    \label{eq:rearranging}
    (1-\delta)\sum_{k = 1}^K
    \gamma_k \|\varphi_k\|_*
    \leq
    2^{q-1}\beta_1 L_g^2
    +
    F_1(x_1)
    -
    F_{K+1}(x_{K+1})
    +
    (1+\delta)^2
    \sum_{k = 1}^K
    \frac{L_{F_k} \gamma_k^2}{2}.
\end{equation}
Since the Moreau envelope is monotone in the smoothing parameter and $\beta_{k+1}\le\beta_k$, the functions $F_k$ are pointwise nondecreasing. Hence
$$
F_{K+1}(x_{K+1})\ge F_{K+1}^{\star}\ge F_1^\star.
$$
Using
$$
L_{F_k}=L_{\nabla f}+c_1c_2\frac{\|T\|_{\mathrm{op}}^2}{\beta_k},
\qquad
\gamma_k=\gamma k^{-p},
\qquad
\beta_k=\beta k^{-q},
$$
with the constants $c_1,c_2$ from Lemma~\ref{lem:composition_smooth}, we obtain
\begin{equation}
    \label{eq:Okp gamma}
    (1-\delta)
    \sum_{k = 1}^K
    \frac{\gamma}{k^p}
    \|\varphi_k\|_*
    \leq
    C_0
    +
    (1+\delta)^2
    \sum_{k = 1}^K
    \left(
    \frac{L_{\nabla f}\gamma^2}{2k^{2p}}
    +
    \frac{\gamma^2c_1c_2\|T\|_{\mathrm{op}}^2}{2\beta k^{2p-q}}
    \right),
\end{equation}
where $C_0=2^{q-1}\beta L_g^2+F_1(x_1)-F_1^\star$ is independent of $K$.

The assumptions $q<p$ and $p>0$ imply $q<2p$, so Lemma~\ref{lemma:bounds} applies to~\eqref{eq:Okp gamma}. In the non-boundary case $p\neq1/2$ and $2p-q\neq1$, it gives
$$
\min_{j\in[K]}\|\nabla F_j(x_j)\|_*
=
\mathcal O\!\left(K^{\max\{p-1,\,-p,\,q-p\}}\right).
$$

Now assume that $T$ is surjective and let $z_j$ and $\tilde x_j$ be as in the statement of the theorem. Two points require care.

First, the translation to composite stationarity. Since $T\tilde x_j = z_j$, $\|\tilde x_j - x_j\|_2\le\sigma_{\min}(T)^{-1}\|Tx_j-z_j\|_2$, and $\nabla g^{\beta_j}(Tx_j)\in\partial g(z_j)$, we can estimate
\begin{align*}
\mathrm{dist}_2\big({-\nabla f(\tilde x_j)},\, T^*\partial g(z_j)\big)
&\le \big\|\nabla f(\tilde x_j) + T^*\nabla g^{\beta_j}(Tx_j)\big\|_2\\
&\le \|\nabla F_j(x_j)\|_2 + \|\nabla f(\tilde x_j)-\nabla f(x_j)\|_2\\
&\le c_1\|\nabla F_j(x_j)\|_* + c_2c_3\, L_{\nabla f}\,\|\tilde x_j - x_j\|_2\\
&\le c_1\|\nabla F_j(x_j)\|_* + c_2c_3\, L_{\nabla f}\,\sigma_{\min}(T)^{-1}\beta L_g\, j^{-q},
\end{align*}
where the last step uses Theorem~\ref{thm:gap original prox}, and $c_1,c_2, c_3$ are the norm-equivalence constants so that $\|v\|_2\le c_1\|v\|_*$, $\|v\|_2\le c_2\|v\|$ and $\|u\|\le c_3\|u\|_2$.

Second, the two terms above must be small at a \emph{common} index $j$: the index minimizing $\|\nabla F_j(x_j)\|_*$ over $[K]$ could occur early, where $\beta_j$ is still large. To handle this, restrict the minimum to the last half of the horizon: since every coefficient satisfies $\gamma k^{-p}\ge \gamma K^{-p}$ and the window $\{\lceil K/2\rceil,\dotsc,K\}$ contains at least $K/2$ indices,
$$  
\sum_{k=1}^K \frac{\gamma}{k^p}\|\varphi_k\|_*
\;\ge\; \sum_{k=\lceil K/2\rceil}^{K}\frac{\gamma}{k^p}\|\varphi_k\|_*
\;\ge\; \frac{\gamma}{2}\,K^{1-p}\min_{\lceil K/2\rceil\le j\le K}\|\varphi_j\|_* ,
$$
so~\eqref{eq:Okp gamma} also gives $\min_{\lceil K/2\rceil\le j\le K}\|\varphi_j\|_* = \mathcal O\!\big(K^{\max\{p-1,-p,q-p\}}\big)$, at the cost of a factor $2$. For every $j\ge\lceil K/2\rceil$ we have $j^{-q}\le 2^q K^{-q}$, so evaluating the display above at the minimizing index of the last half yields
$$
\min_{j\in[K]}\mathrm{dist}_2\big({-\nabla f(\tilde x_j)},\,T^*\partial g(z_j)\big)
\le
\mathcal O\!\left(K^{\max\{p-1,\,-p,\,q-p,\,-q\}}\right),
$$
with $\|\tilde x_j - x_j\|_2 = \mathcal O(K^{-q})$ at that same index.

\end{proof}
    
\subsection{Proof of Corollary~\ref{co:1}}\label{proof:co1}
\begin{proof}
    When $g$ is convex, Lemma~\ref{lemma:k to k+1} gives
    $$
    F_{k+1}(x_{k+1})-F_k(x_k)
    \le
    \frac{\beta_k-\beta_{k+1}}{2}L_g^2
    -
    (1-\delta)\gamma_k\|\varphi_k\|_*
    +
    (1+\delta)^2\frac{L_{F_k}\gamma_k^2}{2}.
    $$
    This is the same recurrence used in the proof of Theorem~\ref{th:1}, with a smaller smoothing-change term and without any weak-convexity restriction on $\beta$. Repeating that proof verbatim gives the stated bounds and hence the two schedule corollaries.
\end{proof}

\subsection{Proof of theorem~\ref{th:1 fixed K}}\label{proof:1 fixed K}
\begin{proof}
    By Lemmas~\ref{lemma:energy diff} and~\ref{lemma:k to k+1}, and the assumption that $g$ is $L_g$-Lipschitz, we have
    \begin{equation}
         F_{k+1}(x_{k+1}) -  F_k(x_k) \leq 
        \frac{1}{2}(\beta_{k} - \beta_{k+1}) \frac{\beta_{k}}{\beta_{k+1}} L_g^2
        +
        (\delta - 1)
        \gamma_k \|\varphi_k\|_*
        + (\delta + 1)^2\frac{L_{F_k} \gamma_k^2}{2}
    \end{equation}
    
    Moreover, assuming $\frac{\beta_{k}}{\beta_{k+1}}\leq 2$, we obtain
    \begin{equation}
         F_{k+1}(x_{k+1}) -  F_k(x_k) \leq 
        (\beta_{k} - \beta_{k+1}) L_g^2
        +
        (\delta - 1)
        \gamma_k \|\varphi_k\|_*
        + (\delta + 1)^2\frac{L_{F_k} \gamma_k^2}{2}
    \end{equation}
    Summing over $k=1$ to $K$ and assuming $F_1^\star > -\infty$, the telescoping terms give
    \begin{align}
         F_{K+1}(x_{K+1}) - F_1(x_1) 
         &\leq 
        (\beta_1 - \beta_{K+1}) L_g^2
        +
        \sum_{k = 1}^K
        (\delta + 1)^2\frac{L_{F_k} \gamma_k^2}{2}
        +
        (\delta - 1)\gamma_k \|\varphi_k\|_*
        \\
        &\leq 
        \beta_{1} L_g^2
        +
        \sum_{k = 1}^K
        (\delta + 1)^2\frac{L_{F_k} \gamma_k^2}{2}
        +
        (\delta - 1)\gamma_k \|\varphi_k\|_*
    \end{align}

    By rearranging the terms, we get
    \begin{equation}
        (1-\delta)\sum_{k = 1}^K
        \gamma_k \|\varphi_k\|_*
        \leq 
        \beta_{1} L_g^2
        +
         F_1(x_1)
        -
         F_{K+1}(x_{K+1})
        +
        (\delta + 1)^2
        \sum_{k = 1}^K
        \frac{L_{F_k} \gamma_k^2}{2}
    \end{equation}

    With $L_{F_k} =  L_{\nabla f} + c_1c_2\|T\|^2_\text{op}/\beta_k$, where $c_1,c_2>0$ are the constants from Lemma~\ref{lem:composition_smooth}, take $\gamma_k= \gamma/K^{2/3}$ and $\beta_k = \beta/K^{1/3}$.
    Furthermore, by the monotonicity of $F_k$,
    $$
    F_{K+1}(x_{K+1}) \geq \min_x F_{K+1}(x) =: F_{K+1}^{\star} \geq F_{1}^{\star}
    $$
    we have
    \begin{equation}
        (1-\delta)
        \sum_{k = 1}^K
        \frac{\gamma}{K^{2/3}}
        \|\varphi_k\|_*
        \leq 
        \underbrace{\beta_{1} L_g^2
        +
         F_1(x_1)
        -
         F_{1}^{\star}}_{=: C}
        +
        (\delta + 1)^2
        \sum_{k = 1}^K
        \frac{L_{\nabla f} \gamma^2}{2K^{4/3}}
        +
        \frac{\gamma^2c_1c_2\|T\|^2_\text{op}}{2K\beta} 
    \end{equation}

    Hence, we obtain
    \begin{equation}
        (1-\delta)
        {\gamma}
        K^{1/3}
        \min_{k\in[K]}
        \|\varphi_k\|_*
        \leq 
        C
        +
        (\delta + 1)^2
        K^{-1/3}
        \frac{L_{\nabla f} \gamma^2}{2}
        +
        \frac{\gamma^2c_1c_2\|T\|^2_\text{op}}{2\beta} 
    \end{equation}
    By rearranging the terms, we get 
    $$
    \min_{k\in[K]}\|\varphi_k\|_*
    \leq
    \mathcal O(K^{-1/3})
    $$
    
    Here $C$ is bounded independently of $K$: since the Moreau envelope minorizes $g$ and is monotone in the smoothing parameter, and $\beta_1 = \beta K^{-1/3}\le\beta$, we have $F_1(x_1)\le F(x_1)$ and $F_1^\star \ge \inf_x \{f + g^{\beta}\circ T\}(x)$, which is finite by assumption.

    If $T$ is surjective, the stationarity translation established in the proof of Theorem~\ref{th:1} gives, with $z_k = \mathrm{prox}_{\beta_k g}(Tx_k)$ and $\tilde x_k = x_k + T^*(TT^*)^{-1}(z_k - Tx_k)$,
    $$
    \mathrm{dist}_2(-\nabla f(\tilde x_k), T^*\partial g(z_k))
    \le
    c_1\|\varphi_k\|_* + c_2c_3\,L_{\nabla f}\,\sigma_{\min}(T)^{-1}\beta L_g\,K^{-1/3}
    \qquad\text{for every } k\in[K],
    $$
    since the smoothing parameter is constant, so, in contrast with Theorem~\ref{th:1}, no index-alignment argument is needed. Taking the minimizing index of $\|\varphi_k\|_*$ yields
    $$
    \min_{k\in[K]}\mathrm{dist}_2(-\nabla f(\tilde x_k), T^*\partial g(z_k))
    \leq
    \mathcal O(K^{-1/3})
    $$
\end{proof}

\subsection{Proof of Theorem~\ref{th:Epoch-wise} and Proposition~\ref{prop:epoch-wise-fast}}\label{proof:Epoch-wise}

We first record two observations used by both proofs.

First, whenever Algorithm~\ref{alg:epoch_vs} returns a point, that point is an $\epsilon$-proxy certificate \emph{by construction}, because both quantities in the stopping rule are evaluated explicitly before returning. The content of Theorem~\ref{th:Epoch-wise} and of Proposition~\ref{prop:epoch-wise-fast} is therefore an upper bound on the number of iterations before the stopping rule must trigger.

Second, during epoch $l$, Algorithm~\ref{alg:epoch_vs} evaluates the gradient norms $\|\nabla F_{k+1}(x_{k+1})\|_*$ for $k = 2^l,\dotsc,2^{l+1}-1$, i.e., at the indices
$$
W_l := \{2^l+1,\, \dotsc,\, 2^{l+1}\},
$$
and the value of $S_l$ at the end of epoch $l$ equals $\min_{j\in W_l}\|\varphi_j\|_*$, where, as before, $\varphi_j = \nabla F_j(x_j)$. 

\begin{proof}[Proof of Theorem~\ref{th:Epoch-wise}]
Throughout, $(p,q) = (2/3,1/4)$, i.e., $\gamma_k = \gamma k^{-2/3}$ and $\beta_k = \beta k^{-1/4}$, and $L_{F_k} = L_{\nabla f} + c_1c_2\|T\|^2_{\mathrm{op}}/\beta_k$ with $c_1,c_2$ the constants from Lemma~\ref{lem:composition_smooth}. By Lemmas~\ref{lemma:energy diff} and~\ref{lemma:k to k+1}, the $L_g$-Lipschitz continuity of $g$, and $\beta_k/\beta_{k+1} = ((k+1)/k)^{1/4}\le 2$, we have for every $k\ge1$
\begin{equation}\label{eq:one step epoch}
     F_{k+1}(x_{k+1}) -  F_k(x_k) \leq
    (\beta_{k} - \beta_{k+1}) L_g^2
    -
    (1-\delta)
    \gamma_k \|\varphi_k\|_*
    + (\delta + 1)^2\frac{L_{F_k} \gamma_k^2}{2}.
\end{equation}

\emph{Step 1: uniform bound on the objective drift.}
Dropping the negative term in~\eqref{eq:one step epoch} and summing from $k=1$ to $K-1$ gives, for every $K\ge 1$, using $\sum_{k\ge1}(\beta_k-\beta_{k+1})\le\beta$,
\begin{align*}
    F_K(x_K) &\le
F_1(x_1) + \beta L_g^2
+ \frac{(\delta+1)^2}{2}\left(L_{\nabla f}\gamma^2\sum_{k\ge1}k^{-4/3} + \frac{c_1c_2\|T\|^2_{\mathrm{op}}\gamma^2}{\beta}\sum_{k\ge1}k^{-13/12}\right)\\
&=: F_1(x_1) + D\\
\end{align*}

where $D<\infty$ because both series converge: $2p = 4/3>1$ and $2p-q = 13/12>1$.
Since the schedules are nonincreasing, the functions $F_k$ are pointwise nondecreasing in $k$, so $F_K(x_K) \ge F_K^\star \ge F_1^\star$ for every $K$. Consequently, for every epoch $l$,

\begin{equation}\label{eq:uniform drift}
F_{2^l+1}(x_{2^l+1}) - F_{2^{l+1}+1}(x_{2^{l+1}+1})
\;\le\; F_1(x_1) + D - F_1^\star \;=:\; \Delta \;<\;\infty .
\end{equation}

\emph{Step 2: per-epoch bound on the smoothed gradient.}
Summing~\eqref{eq:one step epoch} over $k\in W_l$ (that is, from $k = 2^l+1$ to $2^{l+1}$, matching the indices evaluated by the algorithm), telescoping the $F$-terms, and using~\eqref{eq:uniform drift} together with $\sum_{k\in W_l}(\beta_k-\beta_{k+1})\le\beta_{2^l}$ yields

\begin{equation}
\label{eq:sum epoch}
(1-\delta)
\sum_{k \in W_l}
\frac{\gamma}{k^{2/3}}
\|\varphi_k\|_*
\leq
\beta_{2^l} L_g^2 + \Delta
+
(\delta + 1)^2
\sum_{k \in W_l}
\left(
\frac{L_{\nabla f} \gamma^2}{2k^{4/3}}
+
\frac{\gamma^2c_1c_2\|T\|^2_{\mathrm{op}}}{2\beta\, k^{13/12}}
\right).
\end{equation}

For the left-hand side, since $k \le 2^{l+1}$ for every $k\in W_l$ and $|W_l| = 2^l$,

\begin{equation}
    \label{eq:kp left side epoch}
    (1-\delta)
    \sum_{k \in W_l}
    \frac{\gamma}{k^{2/3}}
    \|\varphi_k\|_*
    \geq
    (1-\delta)\,
    \frac{\gamma}{2^{2/3}}\,(2^{l})^{1/3}
    \min_{j\in W_l}\|\varphi_j\|_* .
\end{equation}
For the right-hand side, comparing each summand with the integral of the decreasing function it samples ($k^{-a}\le\int_{k-1}^{k} x^{-a}\,dx$),
$$
\sum_{k \in W_l} k^{-4/3}
\le \int_{2^l}^{\infty} x^{-4/3}\,dx
= 3\,(2^l)^{-1/3},
\qquad
\sum_{k \in W_l} k^{-13/12}
\le \int_{2^l}^{\infty} x^{-13/12}\,dx
= 12\,(2^l)^{-1/12}.
$$
Combining the three displays and using $\beta_{2^l} = \beta(2^l)^{-1/4}$, we arrive at
\begin{equation}
\begin{aligned}
    \label{eq:sum epoch 2}
    \min_{j \in W_l}
    \|\varphi_j\|_*
    \leq\ &
    \frac{2^{2/3}\,\Delta}{\gamma(1-\delta)}
    \, (2^{l})^{-1/3}
    +
    \frac{2^{2/3}\,\beta L_g^2}{\gamma(1-\delta)}\, (2^l)^{-7/12}
    +
    (\delta + 1)^2
    \frac{3\cdot 2^{2/3}\, L_{\nabla f}\, \gamma}{2(1-\delta)}\,
    (2^{l})^{-2/3}
    \\&+
    (\delta + 1)^2
    \frac{6\cdot 2^{2/3}\,\gamma\, c_1c_2\|T\|^2_{\mathrm{op}}}{\beta(1-\delta)}\,
    (2^{l})^{-5/12}
    \;\le\; A\,(2^l)^{-1/3}
\end{aligned}
\end{equation}
for a constant $A$ depending only on $\Delta,\gamma,\beta,\delta,L_g,L_{\nabla f}$, and $c_1c_2\|T\|^2_{\mathrm{op}}$ (the exponent $-1/3$ is the largest of the four).

\emph{Step 3: proximal gap and iteration count.}
By Theorem~\ref{thm:gap original prox}, for every $j \in W_l$,

\begin{equation}
\label{eq:prox bound epoch}
\|Tx_j - z_j\|_2 \leq \beta L_g\, j^{-1/4}
\leq
\beta L_g\, (2^l)^{-1/4},
\qquad z_j = \prox{\beta_j g}{Tx_j}.
\end{equation}

Let $l^*$ be the first epoch such that both
$$
A\,(2^{l^*})^{-1/3}\le\epsilon
\qquad\text{and}\qquad
\beta L_g\, (2^{l^*})^{-1/4}\le\epsilon,
\qquad\text{so that}\qquad
2^{l^*} \le 2\max\big\{(A/\epsilon)^{3},\, (\beta L_g/\epsilon)^{4}\big\}.
$$

Suppose the algorithm has not already stopped before epoch $l^*$. Within epoch $l^*$, consider the first index $j^*\in W_{l^*}$ at which the running minimum $S_{l^*}$ reaches its final value $\min_{j\in W_{l^*}}\|\varphi_j\|_*\le\epsilon$. At that iteration the algorithm sets $S_{l^*}\le\epsilon$ and evaluates the proximal gap at $j^*$, which satisfies~\eqref{eq:prox bound epoch} and hence is at most $\epsilon$; the stopping rule therefore triggers and the algorithm returns $x_{j^*}=x_{j_{l^*}}$. The total number of iterations performed up to that point is at most $2^{l^*+1} = \mathcal O(\epsilon^{-4})$, which proves Theorem~\ref{th:Epoch-wise}.
\end{proof}

\begin{proof}[Proof of Proposition~\ref{prop:epoch-wise-fast}]
Consider the epoch-constant specialization with restarts: at the start of epoch $l$ the iterate is reset to a point $x^0_l$ with $F(x^0_l)\le F(x_1)$, and throughout the epoch one uses the constant parameters
\begin{align*}
&\beta_{(l)} := \beta 2^{-(l+1)/3},
\qquad
&\gamma_{(l)} := \gamma 2^{-2(l+1)/3},
\\
&F_{(l)} := f + g^{\beta_{(l)}}\circ T,
\qquad
&L_{(l)} := L_{\nabla f} + \frac{c_1c_2\|T\|^2_{\mathrm{op}}}{\beta_{(l)}} .
\end{align*}
Because the smoothing parameter is constant within the epoch and each epoch starts afresh, every epoch is a self-contained fixed-horizon run of length $2^l$ in the sense of Theorem~\ref{th:1 fixed K}: there is no smoothing-change term, and summing the descent inequality of Lemma~\ref{lemma:energy diff} over the $2^l$ iterations of epoch $l$ gives
\begin{equation}
\label{eq:sum epoch alt}
(1-\delta)\,\gamma_{(l)}
\sum_{k\in E_l}
\|\varphi_k\|_*
\leq
F_{(l)}(x^0_l) - F_{(l)}(x^{\mathrm{end}}_l)
+
(\delta + 1)^2\, 2^l\,
\frac{L_{(l)}\gamma_{(l)}^2}{2},
\end{equation}
where $E_l$ collects the $2^l$ points at which steps are taken during epoch $l$ (starting from $x^0_l$), $x^{\mathrm{end}}_l$ is the final point of the epoch, and $\varphi_k = \nabla F_{(l)}(x_k)$.

Two facts make the right-hand side of~\eqref{eq:sum epoch alt} bounded uniformly in $l$.
First, since the Moreau envelope minorizes $g$, we have $F_{(l)}(x^0_l)\le F(x^0_l)\le F(x_1)$; and since $\beta_{(l)}\le\beta$ implies $F_{(l)}\ge F_1$ pointwise (monotonicity of the envelope in the smoothing parameter), we also have $F_{(l)}(x^{\mathrm{end}}_l)\ge F_1^\star$. Hence
$$
F_{(l)}(x^0_l) - F_{(l)}(x^{\mathrm{end}}_l) \;\le\; F(x_1)-F_1^\star \;=:\; \Delta' ,
$$
independently of $l$. 
And, we have
$$
2^l\,\frac{L_{(l)}\gamma_{(l)}^2}{2}
=
\frac{\gamma^2 L_{\nabla f}}{2}\,2^{\,l-\frac{4(l+1)}{3}}
+
\frac{\gamma^2 c_1c_2\|T\|^2_{\mathrm{op}}}{2\beta}\,2^{\,l-(l+1)}
\le
\frac{\gamma^2 L_{\nabla f}}{2}
+
\frac{\gamma^2 c_1c_2\|T\|^2_{\mathrm{op}}}{4\beta}
=: Q,
$$
again independently of $l$.

For the left-hand side of~\eqref{eq:sum epoch alt}, note that $2^l\,\gamma_{(l)} = \gamma\,2^{(l-2)/3} = \gamma\,2^{-2/3}(2^l)^{1/3}$. The gradient norms evaluated by the algorithm during epoch $l$ are those of the iterates produced within the epoch; all of them except the last belong to $E_l$. Therefore, discarding at most one index and using $|E_l|-1 = 2^l-1\ge 2^{l-1}$ for $l\ge1$,
$$
(1-\delta)\,\gamma_{(l)}\sum_{k\in E_l}\|\varphi_k\|_*
\;\ge\;
(1-\delta)\,\frac{\gamma}{2\cdot 2^{2/3}}\,(2^l)^{1/3}\,
\min_{j}\|\varphi_j\|_* ,
$$
where the minimum runs over the evaluated iterates of epoch $l$ except the last one; since the algorithm's $S_l$ is a minimum over a superset of these indices, $S_l$ obeys the same bound. Combining the three previous displays,
\begin{equation}
S_l \;\le\; \min_{j}\|\varphi_j\|_*
\;\le\;
\frac{2^{5/3}\big(\Delta' + (\delta+1)^2 Q\big)}{\gamma(1-\delta)}\,(2^l)^{-1/3}
\;=:\; A'\,(2^l)^{-1/3}.
\end{equation}
Moreover, with the epoch-constant smoothing parameter, the argument of Theorem~\ref{thm:gap original prox} gives, for every iterate $x_j$ of epoch $l$,
\begin{equation}
\|Tx_j - z_j\|_2 \leq \beta_{(l)} L_g = \beta L_g\, 2^{-(l+1)/3} \le \beta L_g\, (2^l)^{-1/3},
\qquad z_j = \prox{\beta_{(l)} g}{Tx_j}.
\end{equation}
Exactly as in Step 3 of the previous proof, the stopping rule must trigger no later than the first epoch $l^*$ with $\max\{A',\,\beta L_g\}\,(2^{l^*})^{-1/3}\le\epsilon$, i.e., $2^{l^*} = \mathcal O(\epsilon^{-3})$, and the total number of iterations up to and including that epoch is at most $\sum_{l\le l^*}2^l\le 2^{l^*+1} = \mathcal O(\epsilon^{-3})$. This proves Proposition~\ref{prop:epoch-wise-fast}.
\end{proof}

\section{Additional Figures and Tables}

\begin{figure}
    \centering
    \caption{Impact of the Newton--Schulz iterations on the performance of the MELMO algorithm. Here, we compare the objective function curve of the by default 6-iterations Newton--Schulz method with the one given by the 2-iterations method.}
    \label{fig:impact_of_iterations}
    \centering
    \begin{minipage}{0.49\textwidth}
        \centering
        \includegraphics[width=\linewidth]{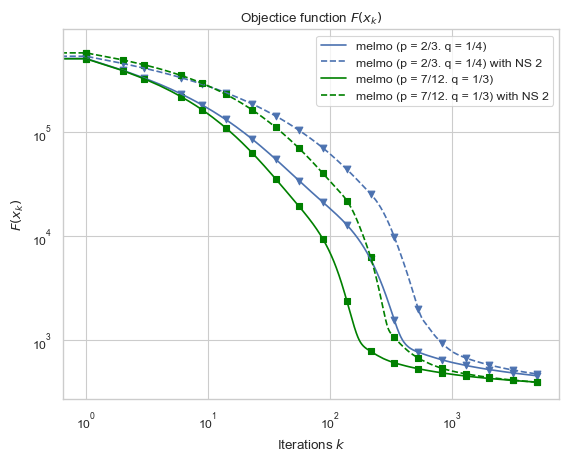}
        \\
        (r = 10)
    \end{minipage}
    \hfill
    \begin{minipage}{0.49\textwidth}
        \centering
        \includegraphics[width=\linewidth]{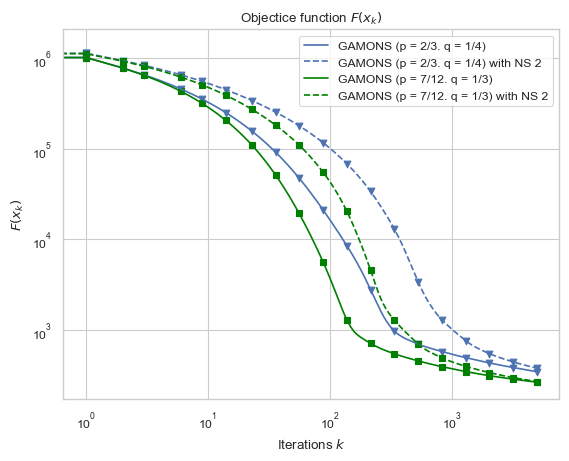}
        \\
        (r = 20)
    \end{minipage}
    \hfill
    \begin{minipage}{0.49\textwidth}
        \centering
        \includegraphics[width=\linewidth]{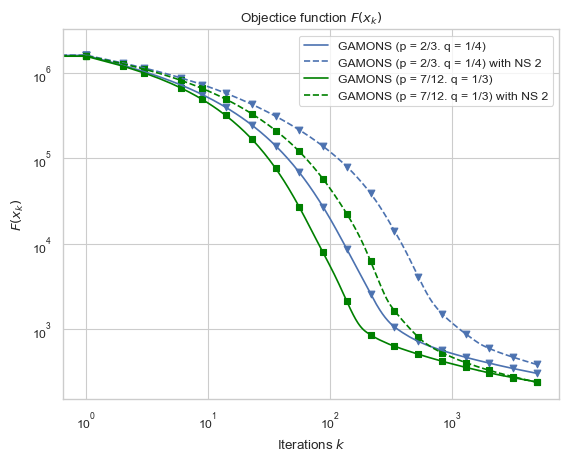}
        \\
        (r = 30)
    \end{minipage}
\end{figure}

\begin{table}[ht]
\centering
\caption{Hyperparameter settings for the sparse low-rank experiments. All values are fixed across datasets; no per-instance tuning is performed.}
\label{tab:hyperparameters_separate}
\begin{tabular}{lll}
\toprule
\textbf{Algorithm} & \textbf{Hyperparameter} & \textbf{Value} \\
\midrule
\multirow{1}{*}{Variable Smoothing}
    & $\beta_0$    & $1$ \\
\midrule
\multirow{2}{*}{MELMO}
    & $\gamma$                & $1$ \\
    & $\beta_0$    & $1$ \\
\midrule
\multirow{1}{*}{Sub-gradient}
    & Step size         & $5\times 10^{-4}$ \\
\midrule
\multirow{2}{*}{Shared}
    & Max iterations               & $5 000$ \\
    & MCP $\mu$               & $3$ \\
    & MCP $\lambda$            & $1$ \\
\bottomrule
\end{tabular}
\end{table}

\begin{table}[ht]
\centering
\caption{Hyperparameter settings for the image denoising experiments. All values are fixed across datasets; no per-instance tuning is performed.}
\label{tab:hyperparameters_denoising}
\begin{tabular}{lll}
\toprule
\textbf{Algorithm} & \textbf{Hyperparameter} & \textbf{Value} \\
\midrule
\multirow{1}{*}{Variable Smoothing}
    & $\beta_0$    & $1$ \\
\midrule
\multirow{2}{*}{MELMO}
    & $\gamma$                & $1$ \\
    & $\beta_0$    & $1$ \\
\midrule
\multirow{2}{*}{Shared}
    & Max iterations               & $2^{13} = 8192$ \\
    & MCP $\mu$               & $2$ \\
    & MCP $\lambda$            & $0.05$ \\
\bottomrule
\end{tabular}
\end{table}

\begin{table}[ht]
\centering
\caption{Hyperparameter settings for the image masked matrix recovery experiments. All values are fixed across datasets; no per-instance tuning is performed.}
\label{tab:hyperparameters_mask}
\begin{tabular}{lll}
\toprule
\textbf{Algorithm} & \textbf{Hyperparameter} & \textbf{Value} \\
\midrule
\multirow{1}{*}{Variable Smoothing}
    & $\beta_0$    & $1$ \\
\midrule
\multirow{2}{*}{MELMO}
    & $\gamma$                & $1$ \\
    & $\beta_0$    & $1$ \\
\midrule
\multirow{1}{*}{Sub-gradient}
    & Step size         & $5\times 10^{-3}$ \\
\midrule
\multirow{2}{*}{Shared}
    & Max iterations               & $10 000$ \\
    & MCP $\mu$               & $3$ \\
    & MCP $\lambda$            & $0.05$ \\
\bottomrule
\end{tabular}
\end{table}

\begin{table*}[ht]
\centering
\caption{Performance comparison of the algorithms across five datasets, each with three ranks. Values represent the reconstruction loss.}
\label{tab:reconstruction loss}
{\small
\setlength{\tabcolsep}{1pt}
\begin{tabular}{l l c c c c}
\toprule
\textbf{Dataset} & \textbf{rank} & \shortstack{\textbf{Sub-}\\\textbf{gradient}} & \shortstack{\textbf{Variable}\\\textbf{smoothing}} & \shortstack{\textbf{MELMO}\\$(p = 7/12, q = 1/3)$} & \shortstack{\textbf{MELMO}\\$(p = 2/3, q = 1/4)$}\\
\midrule

\multirow{3}{*}{Camera} 
& 10 
& $1.449\times10^{-2}$
& $1.454\times10^{-2}$
& $1.420\times10^{-2}$
& $\textbf{1.419}\times10^{-2}$
\\
& 20 
& $8.346\times10^{-3}$
& $6.876\times10^{-3}$
& $6.648\times10^{-3}$
& $\textbf{6.634}\times10^{-3}$
\\
& 30 
& $7.158\times10^{-3}$
& $4.500\times10^{-3}$
& $3.748\times10^{-3}$
& $\textbf{3.727}\times10^{-3}$
\\
\midrule

\multirow{3}{*}{Spectrometer} 
& 10 
& $8.568\times10^{-3}$
& $2.310\times10^{-3}$
& $7.925\times10^{-4}$
& $\textbf{7.431}\times10^{-4}$
\\

& 20 
& $7.146\times10^{-3}$
& $3.240\times10^{-3}$
& $4.781\times10^{-4}$
& $\textbf{4.321}\times10^{-4}$
\\

& 30 
& $6.592\times10^{-3}$
& $3.152\times10^{-3}$
& $3.553\times10^{-4}$
& $\textbf{3.186}\times10^{-4}$
\\

\midrule

\multirow{3}{*}{Olivetti} 
& 10 
& $2.074\times10^{-2}$
& $2.118\times10^{-2}$
& $\textbf{2.049}\times10^{-2}$
& $2.375\times10^{-1}$
\\
& 20 
& $\textbf{1.434}\times10^{-2}$
& $1.850\times10^{-2}$
& $1.480\times10^{-2}$
& $2.230\times10^{-1}$
\\

& 30 
& $\textbf{1.115}\times10^{-2}$
& $1.716\times10^{-2}$
& $1.243\times10^{-2}$
& $1.717\times10^{-1}$
\\
\midrule

\multirow{3}{*}{Football} 
& 10 
& $4.540\times10^{-1}$
& $4.512\times10^{-1}$
& $4.499\times10^{-1}$
& $\textbf{4.497}\times10^{-1}$
\\

& 20 
& $3.208\times10^{-1}$
& $2.997\times10^{-1}$
& $2.983\times10^{-1}$
& $\textbf{2.980}\times10^{-1}$
\\

& 30 
& $2.336\times10^{-1}$
& $1.991\times10^{-1}$
& $1.981\times10^{-1}$
& $\textbf{1.977}\times10^{-1}$
\\
\midrule

\multirow{3}{*}{Low rank synthetic} 
& 10 
& $1.363\times10^{-2}$
& $9.610\times10^{-3}$
& $9.287\times10^{-3}$
& $\textbf{9.210}\times10^{-3}$
\\
& 20 
& $1.897\times10^{-2}$
& $9.380\times10^{-3}$
& $8.115\times10^{-3}$
& $\textbf{8.080}\times10^{-3}$
\\

& 30 
& $1.619\times10^{-2}$
& $5.782\times10^{-3}$
& $4.517\times10^{-3}$
& $\textbf{4.476}\times10^{-3}$
\\
\bottomrule
\end{tabular}
}
\end{table*}

\begin{figure}[ht!]
    \centering
    \begin{minipage}{0.49\textwidth}
        \centering
        \includegraphics[width=\linewidth]{Fk_football_0.3.png}\\
        (Football, $\omega=0.3$)
    \end{minipage}
    \hfill
    \begin{minipage}{0.49\textwidth}
        \centering
        \includegraphics[width=\linewidth]{Fk_miserables_0.3.png}\\
        (Misérables, $\omega=0.3$)
    \end{minipage}
    \hfill
    \begin{minipage}{0.49\textwidth}
        \centering
        \includegraphics[width=\linewidth]{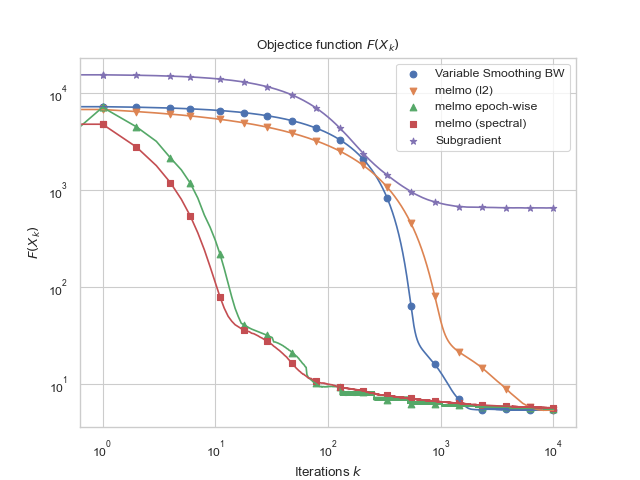}\\
        (Football, $\omega=0.5$)
    \end{minipage}
    \hfill
    \begin{minipage}{0.49\textwidth}
        \centering
        \includegraphics[width=\linewidth]{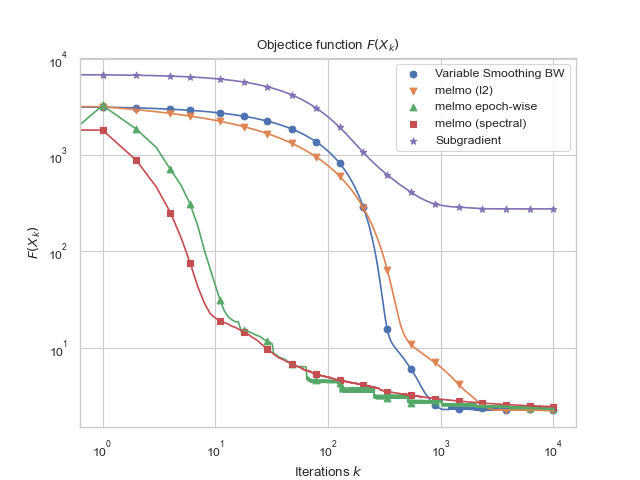}\\
        (Misérables, $\omega=0.5$)
    \end{minipage}
    \hfill
    \begin{minipage}{0.49\textwidth}
        \centering
        \includegraphics[width=\linewidth]{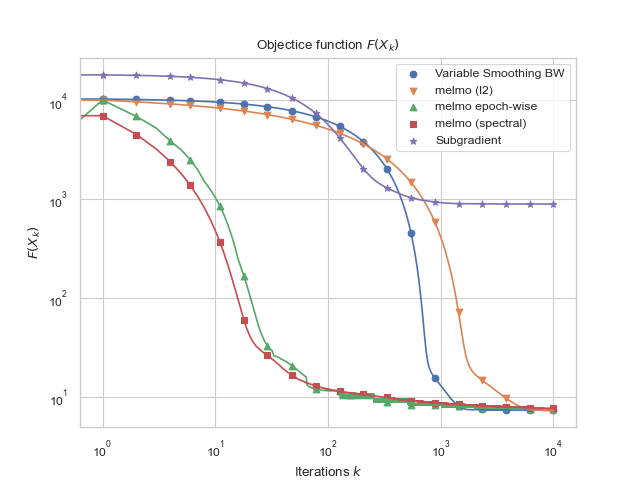}\\
        (Football, $\omega=0.7$)
    \end{minipage}
    \hfill
    \begin{minipage}{0.49\textwidth}
        \centering
        \includegraphics[width=\linewidth]{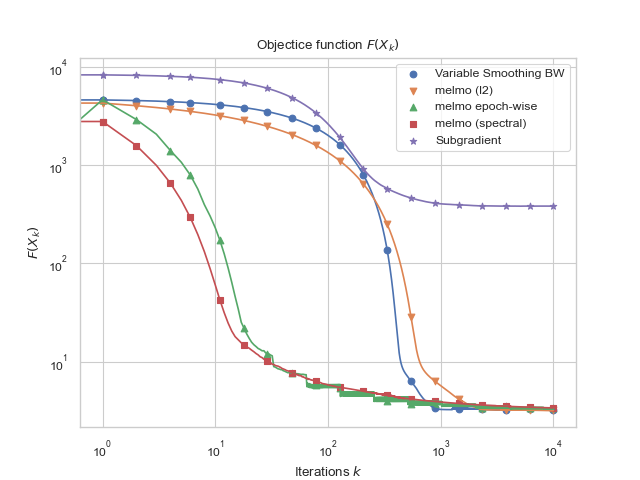}\\
        (Misérables, $\omega=0.7$)
    \end{minipage}
    \caption{Penalized objective $F$ vs.\ iteration (log scale) for masked matrix recovery on Football and Misérables datasets across three observation ratios.}
    \label{fig:masked_objectives}
\end{figure}

\begin{figure}[ht!]
    \centering
    \begin{minipage}{0.49\textwidth}
        \centering
        \includegraphics[width=\linewidth]{rLoss_football_0.3.png}\\
        (Football, $\omega=0.3$)
    \end{minipage}
    \hfill
    \begin{minipage}{0.49\textwidth}
        \centering
        \includegraphics[width=\linewidth]{rLoss_miserables_0.3.png}\\
        (Misérables, $\omega=0.3$)
    \end{minipage}
    \hfill
    \begin{minipage}{0.49\textwidth}
        \centering
        \includegraphics[width=\linewidth]{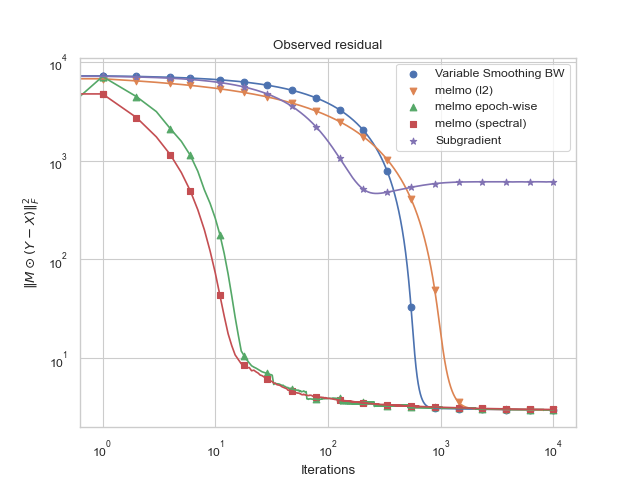}\\
        (Football, $\omega=0.5$)
    \end{minipage}
    \hfill
    \begin{minipage}{0.49\textwidth}
        \centering
        \includegraphics[width=\linewidth]{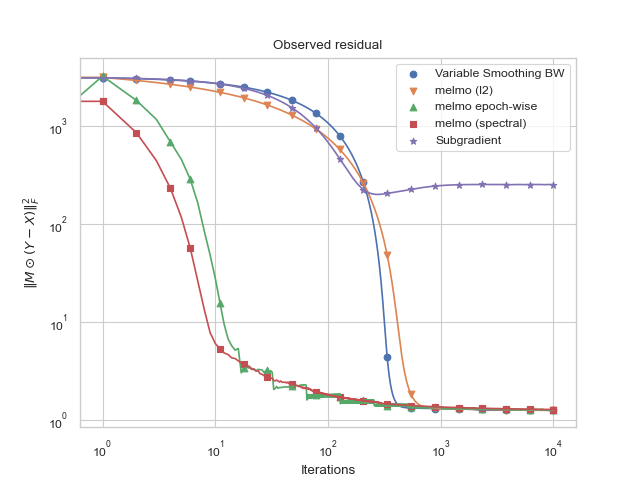}\\
        (Misérables, $\omega=0.5$)
    \end{minipage}
    \hfill
    \begin{minipage}{0.49\textwidth}
        \centering
        \includegraphics[width=\linewidth]{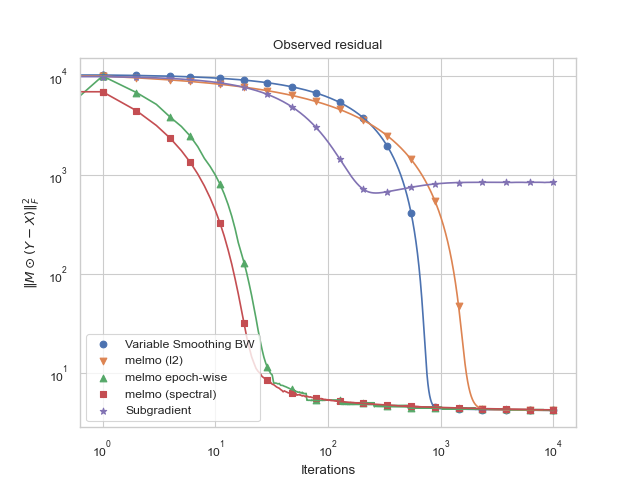}\\
        (Football, $\omega=0.7$)
    \end{minipage}
    \hfill
    \begin{minipage}{0.49\textwidth}
        \centering
        \includegraphics[width=\linewidth]{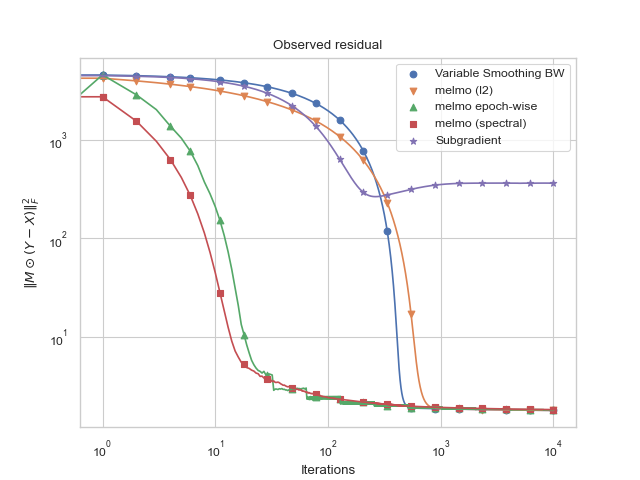}\\
        (Misérables, $\omega=0.7$)
    \end{minipage}
    \caption{Observed residual loss vs.\ iteration (log scale) for masked matrix recovery on Football and Misérables datasets across three observation ratios.}
    \label{fig:masked_rLoss}
\end{figure}

\begin{figure}[ht!]
    \centering
    \begin{minipage}{0.49\textwidth}
        \centering
        \includegraphics[width=\linewidth]{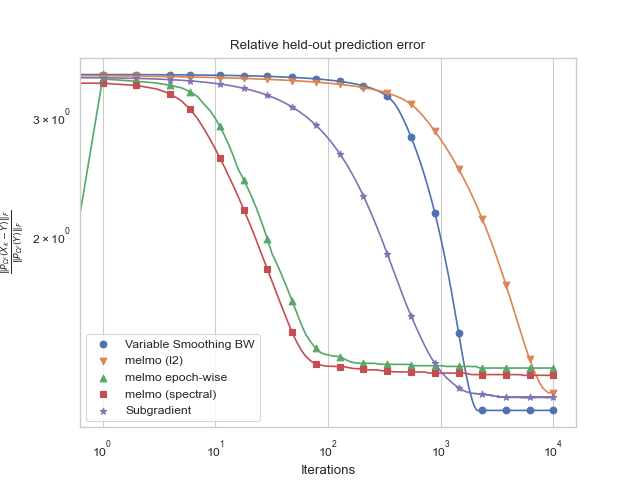}\\
        (Football, $\omega=0.3$)
    \end{minipage}
    \hfill
    \begin{minipage}{0.49\textwidth}
        \centering
        \includegraphics[width=\linewidth]{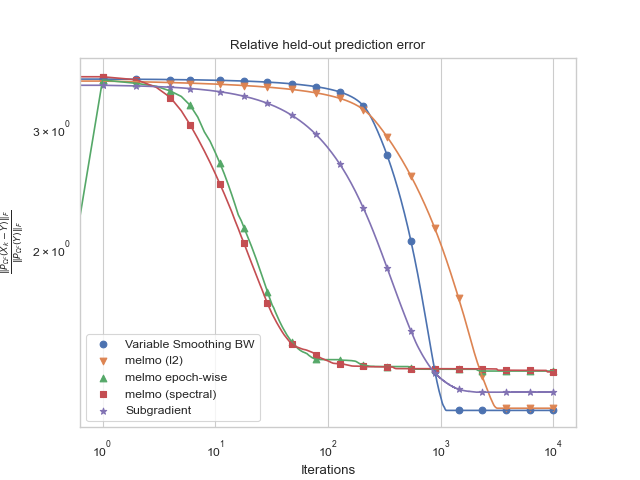}\\
        (Misérables, $\omega=0.3$)
    \end{minipage}
    \hfill
    \begin{minipage}{0.49\textwidth}
        \centering
        \includegraphics[width=\linewidth]{heldOutLoss_football_0.5.png}\\
        (Football, $\omega=0.5$)
    \end{minipage}
    \hfill
    \begin{minipage}{0.49\textwidth}
        \centering
        \includegraphics[width=\linewidth]{heldOutLoss_miserables_0.5.png}\\
        (Misérables, $\omega=0.5$)
    \end{minipage}
    \hfill
    \begin{minipage}{0.49\textwidth}
        \centering
        \includegraphics[width=\linewidth]{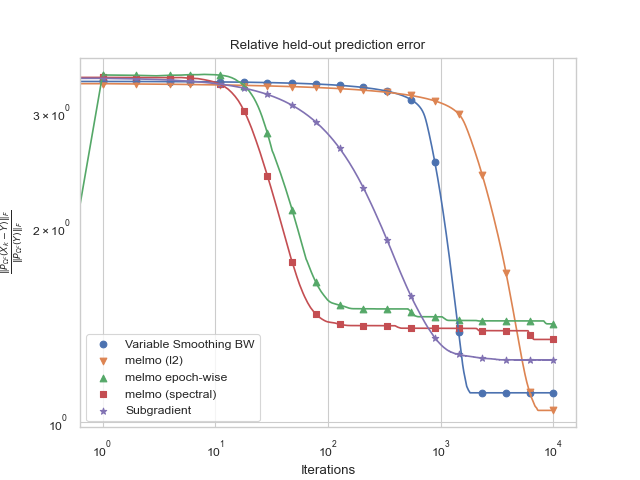}\\
        (Football, $\omega=0.7$)
    \end{minipage}
    \hfill
    \begin{minipage}{0.49\textwidth}
        \centering
        \includegraphics[width=\linewidth]{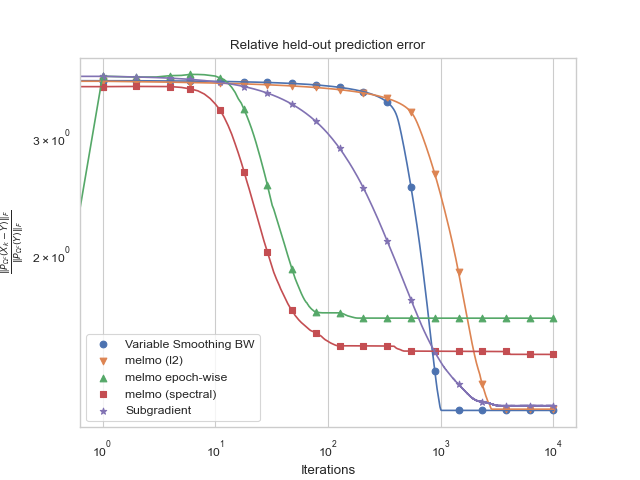}\\
        (Misérables, $\omega=0.7$)
    \end{minipage}
    \caption{Relative held-out loss vs.\ iteration (log scale) for masked matrix recovery on Football and Misérables datasets across three observation ratios.}
    \label{fig:masked_heldOutLoss}
\end{figure}

\begin{figure}[ht!]
    \centering
    \begin{minipage}{0.49\textwidth}
        \centering
        \includegraphics[width=\linewidth]{proximalGap_football_0.3.png}\\
        (Football, $\omega=0.3$)
    \end{minipage}
    \hfill
    \begin{minipage}{0.49\textwidth}
        \centering
        \includegraphics[width=\linewidth]{proximalGap_miserables_0.3.png}\\
        (Misérables, $\omega=0.3$)
    \end{minipage}
    \hfill
    \begin{minipage}{0.49\textwidth}
        \centering
        \includegraphics[width=\linewidth]{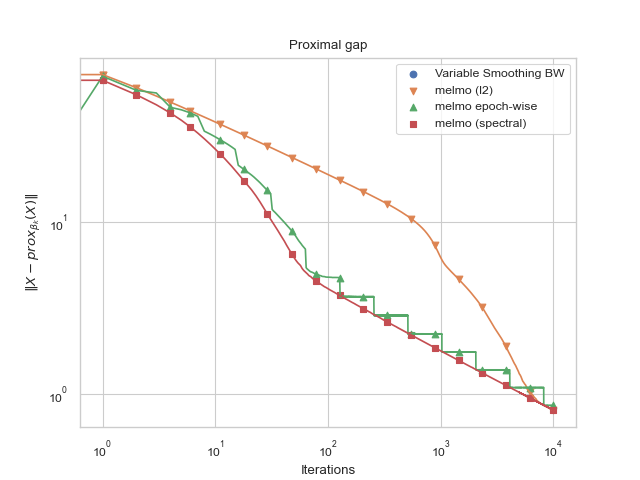}\\
        (Football, $\omega=0.5$)
    \end{minipage}
    \hfill
    \begin{minipage}{0.49\textwidth}
        \centering
        \includegraphics[width=\linewidth]{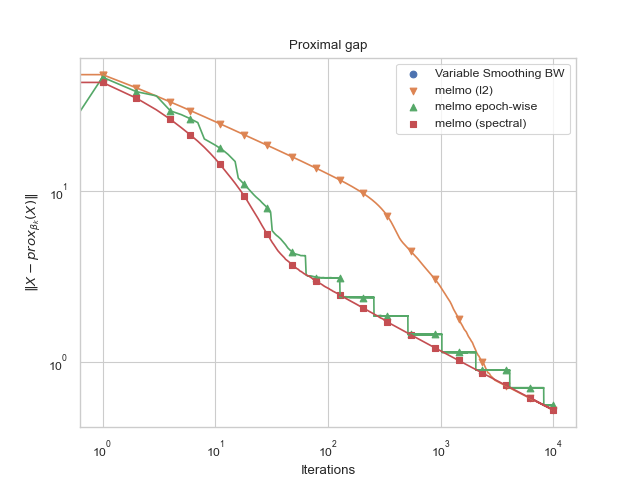}\\
        (Misérables, $\omega=0.5$)
    \end{minipage}
    \hfill
    \begin{minipage}{0.49\textwidth}
        \centering
        \includegraphics[width=\linewidth]{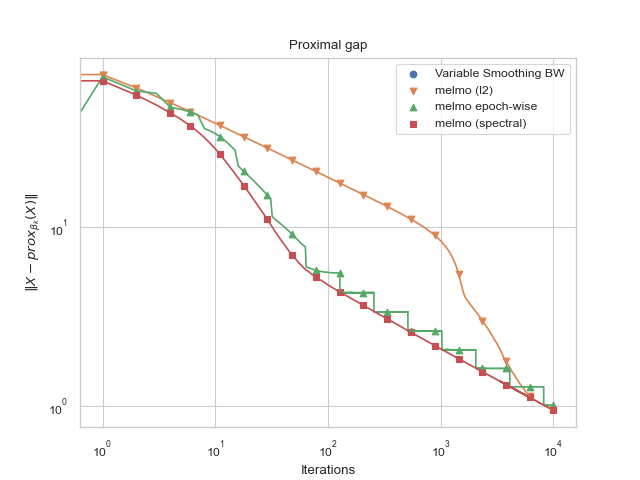}\\
        (Football, $\omega=0.7$)
    \end{minipage}
    \hfill
    \begin{minipage}{0.49\textwidth}
        \centering
        \includegraphics[width=\linewidth]{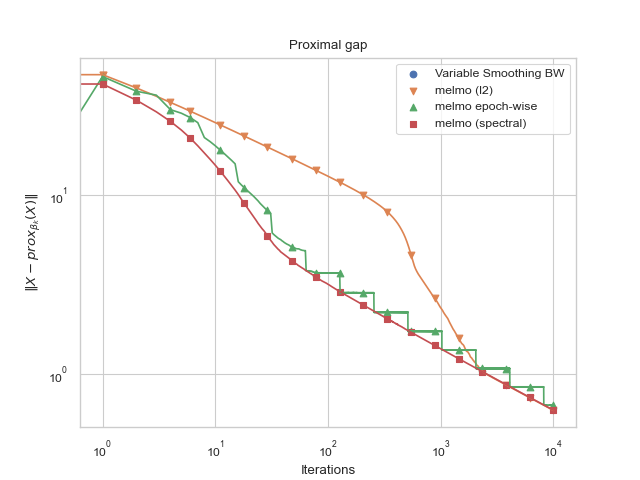}\\
        (Misérables, $\omega=0.7$)
    \end{minipage}
    \caption{Proximal gap vs.\ iteration (log scale) for masked matrix recovery on Football and Misérables datasets across three observation ratios.}
    \label{fig:masked_proximalGap}
\end{figure}

\begin{figure}[ht!]
    \centering
    \begin{minipage}{0.49\textwidth}
        \centering
        \includegraphics[width=\linewidth]{dualNorms_football_0.3.png}\\
        (Football, $\omega=0.3$)
    \end{minipage}
    \hfill
    \begin{minipage}{0.49\textwidth}
        \centering
        \includegraphics[width=\linewidth]{dualNorms_miserables_0.3.png}\\
        (Misérables, $\omega=0.3$)
    \end{minipage}
    \hfill
    \begin{minipage}{0.49\textwidth}
        \centering
        \includegraphics[width=\linewidth]{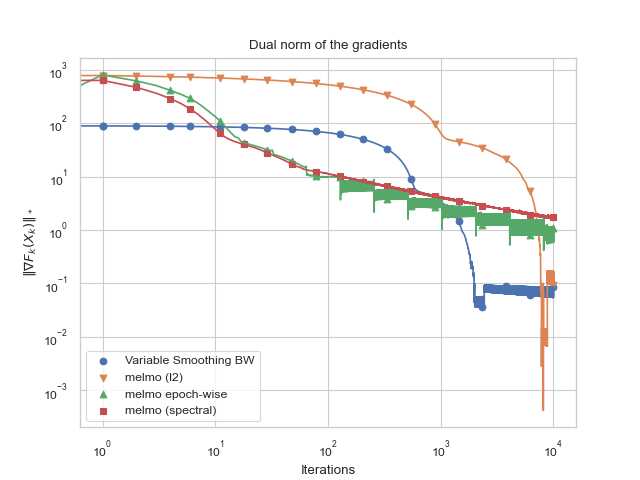}\\
        (Football, $\omega=0.5$)
    \end{minipage}
    \hfill
    \begin{minipage}{0.49\textwidth}
        \centering
        \includegraphics[width=\linewidth]{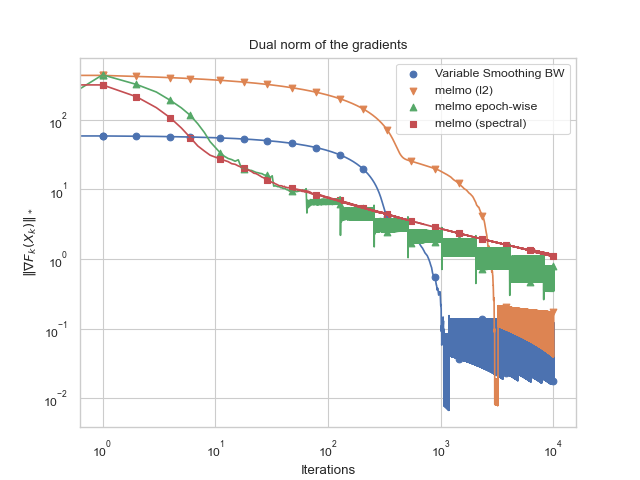}\\
        (Misérables, $\omega=0.5$)
    \end{minipage}
    \hfill
    \begin{minipage}{0.49\textwidth}
        \centering
        \includegraphics[width=\linewidth]{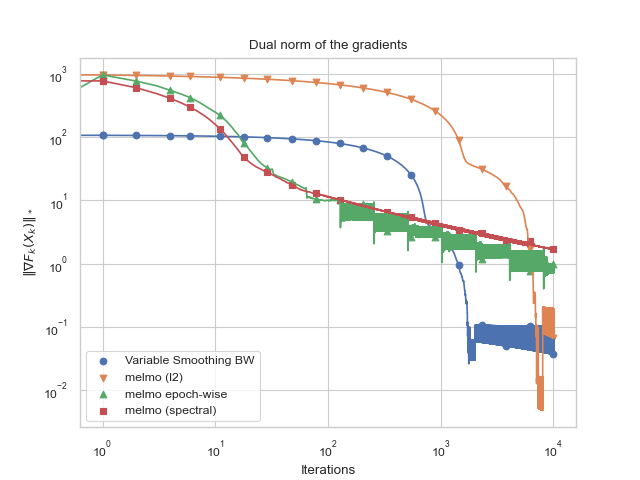}\\
        (Football, $\omega=0.7$)
    \end{minipage}
    \hfill
    \begin{minipage}{0.49\textwidth}
        \centering
        \includegraphics[width=\linewidth]{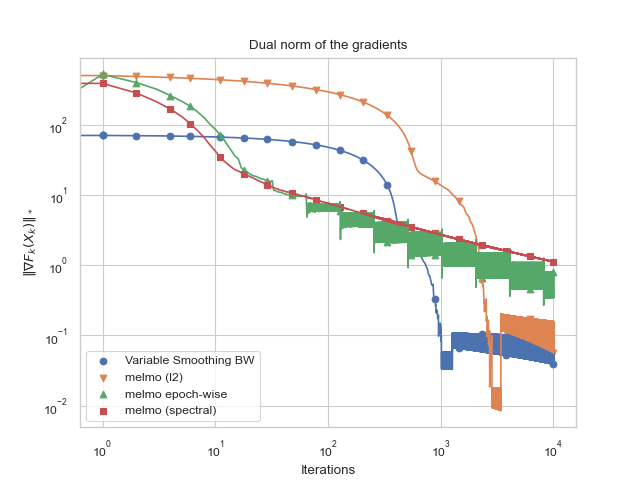}\\
        (Misérables, $\omega=0.7$)
    \end{minipage}
    \caption{Dual norm of the gradients vs.\ iteration (log scale) for masked matrix recovery on Football and Misérables datasets across three observation ratios.}
    \label{fig:masked_dualNorms}
\end{figure}
\end{document}